%%%%%%%%% %%%May-June-July 2015 %%%%%%%%%%%%%%%%%%
%%%%%%%%%%%%%%%%%%%%%%%%%%%%%%%
\documentclass[a4paper, reqno, twoside]{amsart}
\usepackage{etex}
\usepackage{amsfonts,amssymb,graphics,amsthm,amsmath}
\usepackage{latexsym}
\usepackage[2emode]{psfrag}
\usepackage{mathrsfs}
\usepackage[all]{xy}
\usepackage{enumerate}
\usepackage[latin1]{inputenc}
\usepackage{lscape}
\usepackage{a4wide}
\usepackage{txfonts}
\usepackage{multicol}
\usepackage[bookmarks=false]{hyperref}
\usepackage{wasysym}

\usepackage[all]{pstricks}
\usepackage{pst-text,pst-node,pst-coil}
\usepackage{tikz}

\usepackage{cancel}
\usepackage[normalem]{ulem}
\usetikzlibrary{arrows,shapes,backgrounds}

\usepgflibrary{arrows}
\usetikzlibrary{topaths}
\usetikzlibrary{er}

\setcounter{MaxMatrixCols}{10}

\newtheorem{theorem}{\sc Theorem}[section]
\newtheorem{proposition}[theorem]{\sc Proposition}

\newtheorem{lemma}[theorem]{\sc Lemma}
\newtheorem{corollary}[theorem]{\sc Corollary}
\theoremstyle{definition}
\newtheorem{definition}[theorem]{\sc Definition}

\newtheorem{example}[theorem]{\sc Example}

\theoremstyle{remark}
\newtheorem{remark}[theorem]{\sc Remark}

%%%%%%%%%% Laiachi-Marcos%%%%%%%%%%%%%
\newcommand{\tensor}[1]{\otimes_{\scriptscriptstyle{#1}}}

\newcommand{\fk}[1]{\mathfrak{#1}}

\renewcommand{\hom}[3]{\mathrm{Hom}_{#1}\left(#2,\,#3\right)}

\newcommand{\circop}{\circ ^{\Sscript{\mathrm{op}}}}

 \newcommand{\id}[1]{\mathrm{id}_{\Sscript{#1}}}

\newcommand{\Sscript}[1]{\scriptscriptstyle{#1}}
\newcommand{\algk}{\mathsf{Alg}_{\Sscript{\Bbbk}}}
\newcommand{\coalgk}{\mathsf{Coalg}_{\Sscript{\Bbbk}}}
\newcommand{\bialgk}{\mathsf{Bialg}_{\Sscript{\Bbbk}}}
\newcommand{\liek}{\mathsf{Lie}_{\Sscript{\Bbbk}}}
\newcommand{\coliek}{\mathsf{LieCo}_{\Sscript{\Bbbk}}}
\newcommand{\nalgk}{\mathsf{NAlg}_{\Sscript{\Bbbk}}}
\newcommand{\ncoalgk}{\mathsf{NCoalg}_{\Sscript{\Bbbk}}}
\newcommand{\qbialgk}{\mathsf{QBialg}_{\Sscript{\Bbbk}}}
\newcommand{\dqbialgk}{\mathsf{DQBialg}_{\Sscript{\Bbbk}}}
\newcommand{\sdqbialgk}{\mathsf{SDQBialg}_{\Sscript{\Bbbk}}}
\newcommand{\Nalgk}{\mathsf{NAlg}\big(\mathsf{Coalg}_{\Sscript{\Bbbk}}\big)}
\newcommand{\Ncoalgk}{\mathsf{NCoalg}\big(\mathsf{Alg}_{\Sscript{\Bbbk}}\big)}
\newcommand{\vectk}{\mathsf{Vect}_{\Sscript{\Bbbk}}}

%======================Letras bonitas===================

%%%% Mathcal %%%%%%

%\newcommand{\cC}{{\mathcal C}}

\newcommand{\cP}{{\mathcal P}}

\newcommand{\cU}{{\mathcal U}}

%%%%%%%%%%%% Paolo's shortcuts

\newcommand{\varfun}[3]{#1 \colon #2 \to #3}
\newcommand{\lfun}[5]{{#1} \colon {#2} \longrightarrow {#3}, \,  {#4} \longmapsto {#5}}

\newcommand{\K}{\Bbbk}

%%%%%%%%%%%%%%%%%%%%%%%%%%%%%%%%%%%%%%%%%%%%%%%%%%%%%%%%%%%%%%%%%%%%%%%

%%%%%%%%%%%%%%%%%%%%%%%%%%

%\usepackage{showkeys}

\begin{document}
\allowdisplaybreaks

\title[Functorial Constructions for Non-associative Algebras]{Functorial Constructions for Non-associative Algebras with Applications to Quasi-bialgebras}
\author{Alessandro Ardizzoni}
\address{University of Turin, Department of Mathematics ``Giuseppe Peano'', via Carlo Alberto 10, I-10123 Torino, Italy}  \email{alessandro.ardizzoni@unito.it}
\urladdr{sites.google.com/site/aleardizzonihome}
\author{Laiachi El Kaoutit}
\address{Universidad de Granada, Departamento de \'{A}lgebra. Facultad de Educaci\'{o}n, Econon\'ia y Tecnolog\'ia de Ceuta. Cortadura del Valle, s/n. E-51001 Ceuta, Spain}
\email{kaoutit@ugr.es}
\urladdr{http://www.ugr.es/~kaoutit}
\author{Paolo Saracco}
\address{University of Turin, Department of Mathematics ``Giuseppe Peano'', via Carlo Alberto 10, I-10123 Torino, Italy}
\email{p.saracco@unito.it}
\urladdr{sites.google.com/site/paolosaracco}
\date{\today}
\subjclass[2010]{Primary 17A01, 17B62; Secondary 16T99, 18D25.}
\thanks{This paper was written while A. Ardizzoni and P. Saracco were members of the ``National Group for Algebraic and Geometric Structures, and their Applications'' (GNSAGA - INdAM). The research of L. El Kaoutit is supported by the Spanish Ministerio de Econom\'ia y Competitividad and the European Union, grant MTM2013-41992-P.}

\begin{abstract}
The aim of this paper is to establish a contravariant adjunction between the category of quasi-bialgebras and a suitable full subcategory of dual quasi-bialgebras, adapting the notion of finite dual to this framework.  Various functorial constructions involving non-associative algebras and non-coassociative coalgebras are then carried out.  Several examples  illustrating our methods are expounded as well.
\end{abstract}

\keywords{Non-(co)associative (co)algebras;  Quasi-bialgebras; Contravariant adjuntions; Finite duals.}
\maketitle

%\begin{small}
%\tableofcontents
%\end{small}

\pagestyle{headings}

\vspace{-0,5cm}

\section{Introduction}

%{\green This is an attempt for the Introduction. Please make any change you estimate pertinent.}
Algebras and coalgebras are dual notions, in the sense that the latter ones are obtained from the first ones by reversing the structure arrows, that is using the opposite base category. Furthermore, over vector spaces there is a contravariant adjunction (or duality) between the category of algebras and that of coalgebras, whose functors are described as follows. In one direction, to each coalgebra one associates, in a functorial way, its convolution algebra. In the other direction, to each algebra one associates, in a similar way, its topological dual (i.e.~ finite dual) coalgebra.  This adjunction descends in fact to the category of bialgebras (and in particular to Hopf algebras), and also establishes a contravariant adjunction between the category of Lie algebras and the category of Lie coalgebras. All these adjunctions and other ones are captured by the following diagram
\begin{equation}\label{diagramma1}
\xymatrix@R=25pt{ \algk  \ar@{->}^-{(-)^{\circ}}[rrr] &  & & \ar@<+0.9ex>@{->}^-{(-)^*}[lll]  \coalgk  \\    \bialgk  \ar@{->}[u] \ar@{->}^-{(-)^{\circ}}[rrr]   \ar@<+0.9ex>@{->}^-{\cP}[d] & & & \ar@{->}_-{\cP^{\Sscript{c}}}[d] \bialgk \ar@{->}[u] \ar@<+0.9ex>@{->}^-{(-)^{\circ}}[lll] \\ \liek \ar@{->}^-{\cU}[u]  \ar@{->}^-{(-)^{\bullet}}[rrr]  & & & \ar@<+0.9ex>@{->}^-{(-)^{*}}[lll] \coliek \ar@<-0.9ex>@{->}_-{\cU^{\Sscript{c}}}[u] }
\end{equation}
where $\Bbbk$ denotes the base field, and the notations for the involved categories as well as the ones used in the sequel,  are summarized in Table \ref{tavola1} below.

%All vector spaces are over $\Bbbk$. By a \emph{non-(co)associative (co)algebra}
%we mean a (co)unital but \emph{not necessarily (co)associative (co)algebra} over $\Bbbk$.

\begin{table}[ht]\label{tavola1}
\caption{Notations for the handled categories.}
\centering
\begin{tabular}{l l  l}
\hline \hline
Abbreviation  & & The category  of \\  [0.5ex]
\hline
$\vectk$ & $\cdots\cdots$ & vector spaces  \\
$\algk$ & $\cdots\cdots$ & associative algebras \\
$\coalgk$ & $\cdots\cdots$  &  coassociative coalgebras \\
$\bialgk$  & $\cdots\cdots$   &  bialgebras \\
$\liek$ & $\cdots\cdots$  &  Lie algebras \\
$\coliek$ & $\cdots\cdots$   &  Lie coalgebras \\
$\nalgk$ & $\cdots\cdots$   & non-associative algebras\\
$\ncoalgk$ & $\cdots\cdots$   & non-coassociative coalgebras \\
$\Nalgk$ & $\cdots\cdots$ & non-associative algebras inside the monoidal category of coalgebras  \\
$\Ncoalgk$ & $\cdots\cdots$ & non-coassociative coalgebras inside the monoidal category of algebras \\
$\qbialgk$ & $\cdots\cdots$   & quasi-bialgebras \\
$\dqbialgk$ & $\cdots\cdots$  & dual quasi-bialgebras  \\  $\sdqbialgk$ & $\cdots\cdots$  &  split dual quasi-bialgebras \\ & & (i.e.~the 3-cocycle splits as a finite sum of tensor products of linear maps)  \\
[0.5ex] \hline\hline
\end{tabular} \label{table:1}
\end{table}

It is noteworthy to mention that in diagram \eqref{diagramma1}, the functor  $\cP: \bialgk \to \liek$ associates to each bialgebra its Lie algebra of primitive elements, and its left adjoint $\cU: \liek \to \bialgk$ is the universal enveloping algebra functor. Note that in characteristic zero there is a natural isomorphism $\cP \cU \cong \id{\liek}$, see \cite[Theorem 5.18]{MilnorMoore}.
The functor $\cP^{\Sscript{c}}$ is the one given by the vector space of indecomposables, see e.g.~ \cite[Definition 1.9]{Michaelis-primitive} where this functor is denoted by $Q$, and  $\cU^{\Sscript{c}}$ is a functor defined by Michaelis. The adjunction $\cP^{\Sscript{c}} \dashv \cU^{\Sscript{c}}$ is established in \cite[Theorem 3.11]{Michaelis-primitive} for Hopf algebras instead of bialgebras (the same proof can be adapted to our case, as the antipode is not used therein). The bottom contravariant adjunction is established in \cite[Theorem 3.7]{Michaelis-primitive}.  For the top horizontal adjunction see e.g. \cite[Theorem 6.0.5]{Sweedler}. Concerning the middle horizontal adjunction, the finite dual yields an endofunctor of $\bialgk$ in view of \cite[Section 6.2]{Sweedler}. Moreover, this comes out to be adjoint to itself as in case of Hopf algebras (cf. e.g. \cite[page 87]{Abe}).

Quasi-bialgebras are generalization of ordinary bialgebras,  in which the constraint of coassociativity at the coalgebra level is weakened. Dual quasi-bialgebras are in a certain sense a dual notion, which can also be seen as a generalization of bialgebras, by affecting this time the associativity constraint.

The main aim of this paper is  to  investigate the second horizontal adjunction of diagram \eqref{diagramma1} in the context of quasi and dual quasi-bialgebras. Explicitly, we establish a contravariant adjunction between the category of quasi-bialgebras and  the one of split dual quasi-bialgebras (a certain full subcategory of the category of dual quasi-bialgebras) here introduced.  To do so, we investigate how  some of the adjunctions represented in diagram \eqref{diagramma1} extend to the wider framework of non-(co)associative (co)algebras, as in diagram \eqref{diagramma2}. We just point out here that the upper adjunction in diagram \eqref{diagramma2} already appeared in \cite[page 4700]{Anquela}.

%\subsection{Description of the main results.}\label{ssec:02}

\begin{equation}\label{diagramma2}
\xymatrix@R=30pt{  & \nalgk \ar@<+0.9ex>@{->}^-{(-)^{\bullet}}[rrr]      &  & & \ncoalgk \ar@{->}^-{(-)^*}[lll] \\  \liek \ar@{->}[ur] \ar@<+0.75ex>@{->}^-{(-)^{\bullet}}[rrr]  & & & \ar@{->}^-{(-)^*}[lll] \coliek \ar@{->}[ur] & \\&  \Nalgk \ar@{->}[uu]|(.52){\hole}  \ar@<+0.9ex>@{->}^-{(-)^{\bullet}}[rrr] &  & &  \ar@{->}^-{(-)^{\circ}}[lll] \Ncoalgk \ar@{->}[uu] \\ \dqbialgk \ar@{->}[ur] & & &  \ar@{->}^-{(-)^{\circ}}[lll]  \qbialgk  \ar@{->}[ur] &  \\ & \sdqbialgk  \ar@{_{(}->}[ul] \ar@{->}^-{(-)^{\bullet}}[rrr] &  &  & \ar@<+0.9ex>@{->}^-{(-)^{\circ}}[lll]  \qbialgk \ar@{=}[ul] }
\end{equation}

By a \emph{non-associative algebra}
we mean a unital but \emph{not necessarily associative algebra over $\Bbbk$}, i.e.~a vector space $A$ endowed with two linear maps $m:A\otimes A\longrightarrow A$, $a\otimes b\longmapsto ab$ (the multiplication)  and $u:\Bbbk \longrightarrow A$, $k\longmapsto k1_{A}$ (the unit) such that $a1_{A}=a=1_{A}a$, for every $a\in A$. A similar terminology is used for coalgebras.

% \paolo{[IDEA: Since we know that the primitives of a dual quasi-bialgebra still define an ordinary Lie algebra, maybe we can check if the same is true for the indecomposable and add the two arrows to this diagram.]}
% \paolo{???}

\section{The construction of the finite dual of a non-associative algebra and examples.}\label{sec:nass}
In this section we give the main construction of the paper. More explicitly, starting from a non-associative algebra, we are able to construct a non-coassociative coalgebra, which in the associative case coincides with the so called \emph{finite dual coalgebra}, see  \cite{Abe, Montgomery, Sweedler} and \cite{Abuhlail/Gomez-Torrecillas/Wisbauer:2000} for coalgebras over commutative rings. This is the largest coalgebra inside the linear dual of the underlying vector space of the initial algebra. To illustrate our techniques,  we  include two basic examples concerning alternative and (special) Jordan algebras.

Given a vector space $V$, we denote by $V^*\coloneqq  \hom{\Bbbk}{V}{\Bbbk}$ its linear dual. The unadorned tensor product $\otimes$ stands for $\tensor{\Bbbk}$. The identity morphism of a vector space $V$ will be denoted by $\id{V}$ or by $V$ itself.

\subsection{Good subspace of linear dual vector space.}\label{ssec:goods}
Given two vector spaces $V$ and $W,$ we can consider the canonical natural  injection
\begin{equation}\label{Eq:varphi}
\varphi _{V,W}:V^{\ast }\otimes W^{\ast }\longrightarrow \left( V\otimes
W\right) ^{\ast }, \quad \Big( f\otimes g\longmapsto \big[ v\otimes w \mapsto f(v)g(w)\big] \Big),
\end{equation}
which is clearly a natural isomorphism over finite-dimensional vector spaces.

Let $\left( A,m,u\right) $ be a non-associative algebra, see
e.g. \cite[page 428]{Bourbaki-Alg1}. Mimicking \cite[page 13]{Michaelis-LieCoalg}, a subspace $V\subseteq A^{\ast }$ is called \emph{good} in case $m^{\ast }\left( V\right) \subseteq \varphi _{A,\,A}\left( V\otimes V\right)$, where $m^*: A^* \to (A\tensor{}A)^*$ is the dual of the multiplication map $m$.  For instance,
let $I$ be an ideal of $A$, that is  a vector subspace of $A$ stable under both left and right $A$-actions:  for every $a \in A$, we have $aI \subseteq I$, $Ia \subseteq I$, see  \cite[page 430]{Bourbaki-Alg1}. Assume that $A/I$ is finite dimensional as a vector space. Set $V=(A/I)^*$ which we identify with a subspace of $A^*$. One can show that $V$ is a good subspace of $A^*$.

Let $\mathcal{G}$ denote the set of all good subspaces of $A^{\ast }$ and set
\begin{equation}\label{MicFinDual}
A^{\bullet }\coloneqq \sum_{V\in \mathcal{G}}V.
\end{equation}
By the same proof of \cite[Proposition, page 13]{Michaelis-LieCoalg}, one gets that $A^{\bullet }$ is a good subspace of $A^{\ast }$ and hence it is the maximal good subspace of $A^{\ast }$.

Given two non-associative algebras $A$ and $B$ and a linear map $f: A \to B$ such that $f^*(B^{\bullet}) \subseteq A^{\bullet}$, then we can consider the linear map $f^{\bullet}: B^{\bullet} \to A^{\bullet}$, $h\mapsto f^*(h)$, which is uniquely determined by the commutativity of the following diagram:
\begin{equation}\label{eq:pallino}
\xymatrixrowsep{15pt}
\xymatrix{  B^{\bullet} \ar@{-->}^-{f^{\bullet}}[rr] \ar@{^{(}->}[d]_-{j_B} && A^{\bullet}  \ar@{^{(}->}[d]^-{j_A}  \\ B^* \ar@{->}^-{f^*}[rr] & & A^*  }
\end{equation}
where the vertical arrows are the canonical injections.

If we consider a good subspace $V\subseteq A^{\ast }$,  then we may define a unique map $\Delta _{V}:V\rightarrow V\otimes V$
such that
\begin{equation*}
\varphi _{A,A}\left( \Delta _{V}\left( f\right) \right) =m^{\ast }\left(
f\right) ,\text{ for every }f\in V.
\end{equation*}
In particular, for every $f\in A^{\bullet },a,b\in A,$ $\Delta _{V}\left(
f\right) =\sum f_{1}\otimes f_{2}$ is uniquely determined by
\begin{equation}
f\left( ab\right) =m^{\ast }\left( f\right) \left( a\otimes b\right)
=\varphi _{A,A}\left( \Delta _{V}\left( f\right) \right) \left( a\otimes
b\right) =\sum f_{1}\left( a\right) f_{2}\left( b\right) .
\label{comultiplication}
\end{equation}

\subsection{The coalgebra structure of $A^{\bullet}$ and examples}
Parts of the subsequent lemma find their analogues for associative algebras in \cite[Lemma 6.0.1]{Sweedler} and for Lie algebras in \cite[pages 14-15]{Michaelis-LieCoalg}.

\begin{lemma}\label{lemma:4.11}
For every pair of non-associative algebras $(A,m,u)$ and $%
(B,m^{\prime },u^{\prime })$ and for any morphism $f:A\longrightarrow B$, denote with $f^{\ast }:B^{\ast }\longrightarrow A^{\ast }$ the dual map. Then the dual map $m^{\ast }:A^{\ast }\rightarrow \left( A\otimes
A\right) ^{\ast }$ induces a map $\Delta _{A^{\bullet }}\coloneqq m^{\ast }:A^{\bullet}\rightarrow A^{\bullet }\otimes A^{\bullet }$ and the dual map $u^{\ast}:A^{\ast }\rightarrow \Bbbk ^{\ast }\cong \Bbbk :f\mapsto f\left( 1\right) $ restricts to a map $\varepsilon _{A^{\bullet }}\coloneqq u^{\ast }:A^{\bullet}\rightarrow \Bbbk $ such that $(A^{\bullet },\Delta _{A^{\bullet }},\varepsilon_{A^{\bullet }})$ becomes a non-coassociative coalgebra.
\end{lemma}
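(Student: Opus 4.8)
The plan is to verify the three assertions in order: first that $m^{\ast}$ indeed corestricts and restricts to a map $A^{\bullet}\to A^{\bullet}\otimes A^{\bullet}$, second that $u^{\ast}$ restricts to a counit on $A^{\bullet}$, and finally that the counit axioms hold so that $(A^{\bullet},\Delta_{A^{\bullet}},\varepsilon_{A^{\bullet}})$ is a (not necessarily coassociative) coalgebra. For the first point, recall from the discussion preceding \eqref{comultiplication} that $A^{\bullet}$ is a good subspace, so $m^{\ast}(A^{\bullet})\subseteq \varphi_{A,A}(A^{\bullet}\otimes A^{\bullet})$; since $\varphi_{A,A}$ is injective by \eqref{Eq:varphi}, there is a unique linear map $\Delta_{A^{\bullet}}\colon A^{\bullet}\to A^{\bullet}\otimes A^{\bullet}$ with $\varphi_{A,A}\circ\Delta_{A^{\bullet}}=m^{\ast}|_{A^{\bullet}}$, which is exactly the map $\Delta_{V}$ for $V=A^{\bullet}$ introduced above, characterized pointwise by \eqref{comultiplication}. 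This is essentially definitional and requires no serious work beyond citing goodness of $A^{\bullet}$ and injectivity of $\varphi$.

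For the second point, I would first check that $u^{\ast}(A^{\bullet})\subseteq\Bbbk$ is automatic (the target $\Bbbk^{\ast}\cong\Bbbk$ is one-dimensional, so there is nothing to restrict in the codomain; the content is simply that $\varepsilon_{A^{\bullet}}\coloneqq u^{\ast}|_{A^{\bullet}}$ is well-defined, sending $f\mapsto f(1_A)$). The real task is the counit property: for $f\in A^{\bullet}$ with $\Delta_{A^{\bullet}}(f)=\sum f_1\otimes f_2$, I must show $\sum\varepsilon_{A^{\bullet}}(f_1)\,f_2=f=\sum f_1\,\varepsilon_{A^{\bullet}}(f_2)$ as elements of $A^{\bullet}$, equivalently (since $A^{\bullet}\hookrightarrow A^{\ast}$) as functionals on $A$. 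Evaluating at an arbitrary $a\in A$ and using \eqref{comultiplication} together with the unit axioms $1_A a=a=a1_A$ of the non-associative algebra $A$, one computes
\[
\sum f_1(1_A)f_2(a)=m^{\ast}(f)(1_A\otimes a)=f(1_A a)=f(a),
\]
and symmetrically $\sum f_1(a)f_2(1_A)=f(a1_A)=f(a)$. Since this holds for all $a$, the counit axioms follow. Note that coassociativity of $\Delta_{A^{\bullet}}$ is \emph{not} claimed, precisely because $m$ need not be associative — so no comultiplication-compatibility beyond the counit laws is to be checked.

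The only genuinely delicate point — and the step I expect to be the main (modest) obstacle — is making sure all the identifications are compatible: one must know that the element $\Delta_{A^{\bullet}}(f)\in A^{\bullet}\otimes A^{\bullet}$ produced via goodness of $A^{\bullet}$, when pushed into $A^{\ast}\otimes A^{\ast}$ along the canonical injection $j_A\otimes j_A$ and then along $\varphi_{A,A}$, genuinely agrees with $m^{\ast}(f)\in(A\otimes A)^{\ast}$; this is where one uses that $\varphi$ is natural in both variables and that it is injective, so that the factorization through $A^{\bullet}\otimes A^{\bullet}$ is unique and the pointwise formula \eqref{comultiplication} legitimately computes $\Delta_{A^{\bullet}}(f)$. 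Once this bookkeeping is in place, the verification of the counit axioms is the short computation displayed above, and the proof is complete.
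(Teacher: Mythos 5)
Your proposal is correct and follows essentially the same route as the paper: $\Delta_{A^{\bullet}}$ exists by goodness of $A^{\bullet}$ together with injectivity of $\varphi_{A,A}$, and the counit axioms are verified pointwise via \eqref{comultiplication} and the unit laws $1_A a = a = a 1_A$, exactly as in the paper's proof. The extra remark on the compatibility of the identifications is fine but adds nothing beyond what the definition of $\Delta_V$ before \eqref{comultiplication} already guarantees.
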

\begin{proof}
First observe that $\Delta _{A^{\bullet }}$ exists by definition of the finite dual and satisfies $(\ref{comultiplication})$.
Therefore, let us show that $\varepsilon _{A^{\bullet }}$ is a counit for $\Delta _{A^{\bullet }}$. Pick an element $f\in A^{\bullet }$. For every $a\in A$
we have that:
\begin{eqnarray*}
\left( ((\varepsilon _{A^{\bullet }}\otimes A^{\bullet })\circ \Delta _{A^{\bullet
}})(f)\right) (a) &=&\left( (\varepsilon _{A^{\bullet }}\otimes A^{\bullet
})\left( \sum f_{1}\otimes f_{2}\right) \right) (a)=\left( \sum
f_{1}(1)f_{2}\right) (a) \\
&=&\sum f_{1}(1)f_{2}(a)\overset{(\ref{comultiplication})}{=}f(a), \\
\left( ((A^{\bullet }\otimes \varepsilon _{A^{\bullet }})\circ \Delta _{A^{\bullet
}})(f)\right) (a) &=&\left( (A^{\bullet }\otimes \varepsilon _{A^{\bullet
}})\left( \sum f_{1}\otimes f_{2}\right) \right) (a)=\left( \sum
f_{2}(1)f_{1}\right) (a) \\
&=&\sum f_{2}(1)f_{1}(a)\overset{(\ref{comultiplication})}{=}f(a).
\end{eqnarray*}
whence $(A^{\bullet },\Delta _{A^{\bullet }},\varepsilon _{A^{\bullet }})$ is a
non-coassociative coalgebra in $\vectk$.
\end{proof}

\begin{remark}\label{rem:oclassic}
%\begin{enumerate}
%\item
Let $A$  be an object in  $\nalgk$ and  set
\begin{equation}\label{eq:usualfinitedual}
A^{\circ}=\Big\{g\in A^*\mid \ker(g) \textrm{ contains a finite-codimensional ideal of }A\Big\}.
\end{equation}
Here an ideal $I$ of $A$ is of finite codimension, means that $A/I$ is a finite-dimensional vector space.
For any $f\in A^\circ$, there exists a finite-codimensional ideal $I$ such that $f(I)=0$. Then $f$ belongs to the space $(A/I)^*$, which is identified with a good subspace of $A^*$ as in subsection \ref{ssec:goods}. By equation \eqref{MicFinDual}, this means that $f\in A^\bullet$. We have so proved that $A^\circ\subseteq A^\bullet$.  This fact can also be seen as a consequence of \cite[Theorem (2.6)]{Anquela} which asserts that $A^\circ=\mathrm{Loc}(A^\bullet)$, where the latter denotes the sum of all locally finite subcoalgebras of $A^\bullet$ (recall that a non-coassociative coalgebra $C$ is named \emph{locally finite} if and only if any $x\in C$ lies in some finite-dimensional subcoalgebra $D\subset C$).

%\item

For any $A$ in $\algk$ the finite dual $A^\bullet$ coincides with $A^\circ$. By the foregoing $A^\circ \subseteq A^\bullet$. Conversely, identifying $A^*\otimes A^*$ with $\varphi_{A,\, A}(A^*\otimes A^*)$, if $V\subseteq A^*$ is any good subspace then for every $v\in V$, $m^*(v)\in A^*\otimes A^*$. Therefore, in view of \cite[Proposition 6.0.3]{Sweedler}, $v\in A^\circ$ and hence $V\subseteq A^\circ$. Thus $A^\bullet\subseteq A^\circ$ so that $A^\bullet=A^\circ$.
%\end{enumerate}
\end{remark}

We now provide two examples of finite dual of a non-associative algebra.

\begin{example}\label{exam:alt}\emph{Coalternative coalgebras}. Assume $\Bbbk$ has a characteristic $\neq 2$. Let $A$ be an \emph{alternative  algebra}, that is a not necessarily associative  algebra over $\Bbbk$, which satisfy the following identity
$$ x(yx) \,=\, (xy) x, \quad \text{for every } x, y \in A.$$
Replacing $x$ by $x+z$ one sees that the last equality is equivalent to the identity
$$
x(yz) + z(yx) \,=\, (xy) z + (zy)x, \quad \text{for every } x, y, z \in A.
$$

Denote by $\tau: V\tensor{}W \to W\tensor{}V$ the natural flip map, and set $\tau_{\Sscript{1}}=\tau\tensor{}id$ and $\tau_{\Sscript{2}}=id\tensor{}\tau$.
Consider the finite dual coalgebra $C=A^\bullet$ as in  Lemma \ref{lemma:4.1}.  Then the comultiplication of $C$ satisfies the identity
\begin{equation}\label{Eq:alt}
\Big(id+\big(\tau_1\circ \tau_2\circ \tau_1\big)\Big) \circ \Big( (\Delta\tensor{}C)   -  (C\tensor{}\Delta)   \Big) \circ \Delta  \,\,=\,\,0,
\end{equation}
which over elements, says that for any function $f \in C$, we have
\begin{equation}\label{Eq:alt1}
\sum f_{\Sscript{1,1}}\tensor{} f_{\Sscript{1,2}}\tensor{}f_{\Sscript{2}}   + \sum f_{\Sscript{2}}\tensor{} f_{\Sscript{1,2}}\tensor{}f_{\Sscript{1,1}} \,\,=\,\,   \sum f_{\Sscript{1}}\tensor{} f_{\Sscript{2,1}}\tensor{}f_{\Sscript{2,2}} + \sum f_{\Sscript{2,2}}\tensor{} f_{\Sscript{2,1}}\tensor{}f_{\Sscript{1}}.
\end{equation}

A coalgebra $C$ which satisfies the identity \eqref{Eq:alt} is called a \emph{coalternative coalgebra}.
\end{example}

\begin{example}\emph{Jordan coalgebra}.
Assume $\Bbbk$ has a characteristic $\neq \{2,3\}$. Let $A$ be an \emph{a (special) Jordan  algebra}, that is a not necessarily associative  algebra over $\Bbbk$, which satisfy the following identity
$$ xy\,=\, yx,\quad x^2(yx) \,=\, (x^2y) x, \quad \text{for every } x, y \in A.$$
The second equality above comes out to be equivalent to
$$((xy)z)t + ((xt)z)y + ((ty)z)x = (xy)(zt) + (xt)(zy) + (ty)(zx).$$

Denote by $\tau: V\tensor{}W \to W\tensor{}V$ the natural flip map, and set $\tau_{\Sscript{1}}=\tau\tensor{}id\tensor{}id$, $\tau_{\Sscript{2}}=id\tensor{}\tau\tensor{}id$ and $\tau_{\Sscript{3}}=id\tensor{}id\tensor{}\tau$.
Following \cite[Example (3) page 4709]{Anquela}, we can consider the finite dual coalgebra $C=A^\bullet$ as in  Lemma \ref{lemma:4.1}.  Then the comultiplication of $C$ is cocommutative and satisfies the identity
\begin{equation}\label{Eq:Jordan}
\Big[id+\big(\tau_3\circ \tau_2 \circ \tau_3\big)\,+\, \big(\tau_3\circ \tau_2\circ \tau_1\circ \tau_2\circ \tau_3\big)\Big] \circ \Big[ (\Delta\tensor{}C\tensor{}C) -(C\tensor{}C\tensor{}\Delta)\Big] \circ   (\Delta\tensor{}C) \circ \Delta    \,\,=\,\,0,
\end{equation}
which over elements, says that for any function $f \in C$, we have
\begin{multline}\label{Eq:Jordan1}
\sum f_{\Sscript{1,1,1}}\tensor{} f_{\Sscript{1,1,2}}\tensor{}f_{\Sscript{1,2}}\tensor{}f_{\Sscript{2}}    + \sum f_{\Sscript{1,1,1}}\tensor{} f_{\Sscript{2}}\tensor{}f_{\Sscript{1,2}}\tensor{} f_{\Sscript{1,1,2}}
 + \sum f_{\Sscript{2}}\tensor{} f_{\Sscript{1,1,2}}\tensor{}f_{\Sscript{1,2}}\tensor{} f_{\Sscript{1,1,1}}
\\   \,=\,
\sum f_{\Sscript{1,1}}\tensor{} f_{\Sscript{1,2}}\tensor{}f_{\Sscript{2,1}}\tensor{} f_{\Sscript{2,2}}
+
\sum f_{\Sscript{1,1}}\tensor{} f_{\Sscript{2,2}}\tensor{}f_{\Sscript{2,1}}\tensor{}f_{\Sscript{1,2}} + \sum  f_{\Sscript{2,2}}\tensor{} f_{\Sscript{1,2}}\tensor{}f_{\Sscript{2,1}}\tensor{}f_{\Sscript{1,1}}.
\end{multline}

A cocommutative coalgebra $C$ which satisfies the identity \eqref{Eq:Jordan} is called a \emph{Jordan coalgebra}.
\end{example}

\section{Contravariant adjunction between non-associative algebras and non-coassociative coalgebras.}\label{sec:cadj}
In \cite[page 4700]{Anquela} it is claimed that the contravariant functor $(-)^\bullet$ is the right adjoint of the functor $(-)^*$ from the category $\ncoalgk$ of non-coassociative coalgebras to the category  $\nalgk$ of non-associative algebras (we just point out that their (co)algebras have no (co)unit).
%\sout{The aim of this section is to establish a contravariant adjunction between the category $\ncoalgk$ of non-coassociative coalgebras and the category  $\nalgk$  non-associative algebras.}
Such an adjunction extends the usual contravariant adjunction between algebras and coalgebras, that is the first horizontal adjunction in   diagram \eqref{diagramma1}. For the sake of completeness, and as reference for the sequel, we decided to detail the relevant proofs.
We first check that the construction $A\longmapsto A^{\bullet }$ of Section \ref{sec:nass}  defines a contravariant functor from the category $\nalgk$ to  the category  $\ncoalgk$. Next we show that it is adjoint to the functor $(-)^{\ast }$ defined as in the classical case by using the convolution product. In addition,  we
prove that $(A\otimes B)^{\bullet}\cong A^\bullet\otimes B^\bullet$, for every $A,B$ in $\nalgk$.

\subsection{The functorial construction.}\label{ssec:fc}  Keep the notations of Section \ref{sec:nass}.

\begin{lemma}\label{lemma:4.1}
Let $ f: A \to B$ be a morphism of non-associative algebras and  $f^{\ast }:B^{\ast }\longrightarrow A^{\ast }$ its linear dual map. Then $f^{\ast }(B^{\bullet })\subseteq A^{\bullet }$, whence $f^{\ast }$ induces a map $f^{\bullet }:B^{\bullet }\to A^{\bullet }$, which comes out to be a morphism in $\ncoalgk$. Moreover, the assignments $A\longmapsto A^{\bullet }$ and $f\longmapsto
f^{\bullet }$ establish a functor $$(-)^{\bullet }:\nalgk
\longrightarrow \ncoalgk{}^{\Sscript{\mathrm{op}}}.$$
\end{lemma}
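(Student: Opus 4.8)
The plan is to verify the three required assertions in turn: that $f^*$ restricts to $A^\bullet$, that the restriction $f^\bullet$ is a coalgebra morphism, and finally that $(-)^\bullet$ is functorial. For the first assertion, the key observation is that good subspaces behave well under pullback along a non-associative algebra map. Explicitly, I would take $V \subseteq B^*$ a good subspace and show that $f^*(V) \subseteq A^*$ is again good, i.e.\ that $m_A^*(f^*(V)) \subseteq \varphi_{A,A}(f^*(V) \otimes f^*(V))$. This follows from the commutativity of the square relating $m_A^*$, $m_B^*$, $f^*$ and $(f \otimes f)^*$ (which in turn encodes that $f$ is multiplicative, $m_B \circ (f \otimes f) = f \circ m_A$), together with the naturality square for $\varphi$ that says $(f\otimes f)^* \circ \varphi_{B,B} = \varphi_{A,A} \circ (f^* \otimes f^*)$. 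Applying this to $V = B^\bullet$ and using that $B^\bullet$ is the maximal good subspace while $f^*(B^\bullet)$ is good, we conclude $f^*(B^\bullet) \subseteq A^\bullet$. Hence $f^\bullet \colon B^\bullet \to A^\bullet$, $h \mapsto f^*(h)$, is well-defined and fits into diagram \eqref{eq:pallino}.

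For the second assertion, I must check that $f^\bullet$ is compatible with the comultiplications $\Delta_{A^\bullet} = m_A^*$, $\Delta_{B^\bullet} = m_B^*$ (as in Lemma \ref{lemma:4.11}) and the counits $\varepsilon_{A^\bullet} = u_A^*$, $\varepsilon_{B^\bullet} = u_B^*$. The counit compatibility $\varepsilon_{A^\bullet} \circ f^\bullet = \varepsilon_{B^\bullet}$ is immediate since $f \circ u_A = u_B$ (unitality of $f$) gives $u_A^* \circ f^* = u_B^*$ after dualizing, and restricting to $B^\bullet$. For the comultiplication, I would evaluate $(f^\bullet \otimes f^\bullet)(\Delta_{B^\bullet}(h))$ and $\Delta_{A^\bullet}(f^\bullet(h))$ against an arbitrary element $a \otimes b \in A \otimes A$, using the defining relation \eqref{comultiplication}: writing $\Delta_{B^\bullet}(h) = \sum h_1 \otimes h_2$, the left side evaluated at $a \otimes b$ gives $\sum h_1(f(a))\, h_2(f(b)) = h(f(a)f(b)) = h(f(ab)) = (f^*(h))(ab)$, while the right side evaluated at $a \otimes b$ gives, by \eqref{comultiplication} applied to $f^*(h) \in A^\bullet$, exactly $(f^*(h))(ab)$ as well. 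Since $\varphi_{A,A}$ is injective, the two coincide, so $f^\bullet$ is a morphism in $\ncoalgk$.

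The functoriality is then a formal consequence: $\id{A}^* = \id{A^*}$ restricts to $\id{A^\bullet}$, and for composable algebra maps $A \xrightarrow{f} B \xrightarrow{g} C$ one has $(g \circ f)^* = f^* \circ g^*$ at the level of linear duals, hence $(g \circ f)^\bullet = f^\bullet \circ g^\bullet$ after restriction; this is precisely the statement that $(-)^\bullet$ is a (covariant) functor $\nalgk \to \ncoalgk{}^{\Sscript{\mathrm{op}}}$. I expect the only genuinely delicate point to be the first step — ensuring $f^*$ preserves goodness — since it requires assembling the multiplicativity square for $f$ and the naturality square for $\varphi$ correctly and invoking maximality of $B^\bullet$; once that is in place, everything else reduces to routine element-wise checks exploiting the injectivity of the maps $\varphi_{V,W}$.
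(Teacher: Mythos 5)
Your proposal is correct and follows essentially the same route as the paper: you use the dualized multiplicativity square for $f$ together with the naturality of $\varphi_{-,-}$ to see that $f^*(B^\bullet)$ is a good subspace of $A^*$, hence contained in $A^\bullet$ (the sum of all good subspaces), and then verify comultiplicativity, counitality and functoriality. The only cosmetic difference is that the paper checks comultiplicativity of $f^\bullet$ by a diagram chase, whereas you do the equivalent element-wise computation via \eqref{comultiplication} and the injectivity of $\varphi_{A,A}\circ(j_A\otimes j_A)$.
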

\begin{proof}
Since $f$ is multiplicative, the left-hand side diagram below commutes, so that, by functoriality of $(-)^*$, the right-hand side one commutes, too.
\begin{displaymath}
\xymatrix@C=30pt{
A\otimes A \ar[r]^-{m_A} \ar[d]_-{f\otimes f} & A\ar[d]^-{f} \\
B\otimes B \ar[r]_-{m_B} & B}
\qquad
\xymatrix@C=30pt{
A^* \ar[r]^-{m_A^*} & (A\otimes A)^* \\
B^* \ar[r]_-{m_B^*} \ar[u]^-{f^*} & (B\otimes B)^* \ar[u]_-{(f\otimes f)^*}}
\end{displaymath}
The latter diagram is part of the following bigger one:
\begin{displaymath}
\xymatrix@C=40pt@R=30pt{
 & A^* \ar[rr]^-{m_A^*} & & (A\otimes A)^* \\
B^* \ar[rr]_-{m_B^*} \ar[ur]^-{f^*} & & (B\otimes B)^* \ar[ur]^-{(f\otimes f)^*} & A^*\otimes A^* \ar[u]_-{\varphi_{A,A}} \\
B^\bullet \ar[r]_-{\Delta_{B^\bullet}} \ar @{^(->}[u]^-{j_B} & B^\bullet\otimes B^\bullet \ar @{^(->}[r]_-{j_B\otimes j_B} & B^*\otimes B^* \ar[u]_-{\varphi_{B,B}} \ar[ur]_-{f^*\otimes f^*} & }
\end{displaymath}
which still commutes by definition of $\Delta_{B^\bullet}$ and by naturality of $\varphi_{-,-}$. In particular, for every $g\in B^\bullet$
\begin{equation*}
m^*_A\left(f^*(g)\right)=\varphi_{A,A}\left(\left(f^*\otimes f^*\right)(\Delta_{B^\bullet}(g))\right)\in \varphi_{A,A}\left(f^*\left(B^\bullet\right)\otimes f^*\left(B^\bullet\right)\right)
\end{equation*}
so that $f^*\left(B^\bullet \right)$ is a good subspace of $A^*$. Furthermore, the commutativity of all the other quads in the subsequent diagram implies the commutativity of the one at the bottom, which encodes the comultiplicativity of $f^\bullet$:
\begin{displaymath}
\xymatrix @C=40pt @R=20pt{
B^* \ar[rrr]^-{m_B^*} \ar[dr]_{f^*} & & & (B\otimes B)^* \ar[dl]^{(f\otimes f)^*} \\
 & A^* \ar[r]^-{m_A^*} & (A\otimes A)^* & \\
 & A^\bullet \ar@{^(->}[u]^-{j_A} \ar[r]_-{\Delta_{A^\bullet}} & A^\bullet\otimes A^\bullet  \ar[u]_-{\varphi_{A,A}\circ(j_A\otimes j_A)} & \\
B^\bullet \ar[rrr]_-{\Delta_{B^\bullet}} \ar[ur]^{f^\bullet} \ar @{^(->}[uuu]^-{j_B} & & & B^\bullet\otimes B^\bullet \ar[uuu]_-{\varphi_{B,B}\circ(j_B\otimes j_B)} \ar[ul]_{f^\bullet\otimes f^\bullet} }
\end{displaymath}
Moreover, $f^\bullet$ is counital, since $\varepsilon_{A^\bullet}\circ f^\bullet = (u_A)^\bullet\circ f^\bullet=(f\circ u_A)^\bullet=(u_B)^\bullet=\varepsilon_{B^\bullet}$.
By Lemma \ref{lemma:4.11} it follows that $(-)^\bullet$ actually defines a contravariant functor from $\nalgk$ to $\ncoalgk$.
\end{proof}

\subsection{The contravariant adjunction.}\label{ssec:cadj}
Recall that the assignment $(-)^{\ast }:\coalgk \longrightarrow
\algk$ defines a contravariant functor between the category
of coassociative $\Bbbk $-coalgebras and the category of associative $\Bbbk $-algebras (cf., e.g., \cite[Theorem 6.0.5]{Sweedler}) that easily extends to a contravariant functor $(-)^{\ast }:\ncoalgk \longrightarrow
\nalgk$. In this way, we will show that this functor is in fact an adjoint to the functor $(-)^{\bullet}$ of subsection \ref{ssec:fc}. By restricting these contravariant functors to the full subcategories $\liek$  and $\coliek$, one recovers Michaelis' result \cite[Theorem on page 15]{Michaelis-LieCoalg}.

\begin{proposition}\label{prop:adj1}
Let $(A,m,u)$ be a non-associative algebra and $(C,\Delta ,\varepsilon )$ a non-coassociative coalgebra. We have a natural  isomorphism
\begin{equation}\label{eq:nonassadj}
\Phi_{\Sscript{(A,C)}}:\nalgk(A,C^{\ast})\longrightarrow \ncoalgk(C,A^{\bullet })
\end{equation}

Therefore the functor $(-)^{\bullet }:\nalgk \longrightarrow (\ncoalgk)^{\mathrm{op}}$ is left adjoint to $(-)^{\ast }:(\ncoalgk)^{\mathrm{op}}\longrightarrow \nalgk$.
\end{proposition}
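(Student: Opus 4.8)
The plan is to construct the isomorphism $\Phi_{(A,C)}$ explicitly and verify naturality, exploiting the classical duality together with the universal property of $A^{\bullet}$ as the maximal good subspace of $A^*$. First I would fix a $\Bbbk$-algebra map (i.e.\ a multiplicative, unital linear map) $\gamma\colon A\to C^*$. Composing with the canonical map $\iota_C\colon C\to C^{**}$ (which need not be injective in general but is harmless here since we only use $\iota_C^*$) produces a linear map $\gamma^*\circ\iota_C$... more precisely I would note that the linear dual $\gamma^*\colon C^{**}\to A^*$ restricted along $\iota_C\colon C\to C^{**}$ gives a linear map $C\to A^*$. The key claim is that this map lands inside $A^{\bullet}$: since $\gamma$ is an algebra map into the convolution algebra $C^*$, the image $\gamma^*(\iota_C(C))$ is a good subspace of $A^*$ — the comultiplication $\Delta$ of $C$ provides exactly the factorization of $m_A^*$ through $\varphi_{A,A}$ that the definition of ``good'' requires. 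By maximality of $A^{\bullet}$ (equation \eqref{MicFinDual}), this image sits in $A^{\bullet}$, so we obtain $\Phi_{(A,C)}(\gamma)\colon C\to A^{\bullet}$. One then checks this is a coalgebra morphism: counitality follows from $\gamma$ being unital ($\varepsilon_{A^\bullet}\circ\Phi(\gamma)=u_A^*\circ\gamma^*\circ\iota_C$ evaluates to $\varepsilon_C$ because $\gamma(1_A)=\varepsilon_C$), and comultiplicativity is precisely the ``good subspace'' factorization transported through $\Delta_{A^\bullet}=m_A^*$ and the naturality of $\varphi_{-,-}$, unwound via \eqref{comultiplication}.

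Next I would construct the inverse. Given a coalgebra morphism $\delta\colon C\to A^{\bullet}$, compose with the canonical injection $j_A\colon A^{\bullet}\hookrightarrow A^*$ to get $j_A\circ\delta\colon C\to A^*$, take its linear dual $A^{**}\to C^*$, and precompose with the canonical map $\iota_A\colon A\to A^{**}$, obtaining $\Psi_{(A,C)}(\delta)\colon A\to C^*$, $a\mapsto\big[c\mapsto (j_A\delta(c))(a)\big]$. That this is an algebra map into the convolution algebra $C^*$ is the dual statement: it is multiplicative because $\delta$ is comultiplicative (again passing through \eqref{comultiplication} and $\Delta_{A^\bullet}=m_A^*$), and unital because $\delta$ is counital. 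The two assignments $\Phi$ and $\Psi$ are mutually inverse by a direct unravelling — both sides, evaluated on $a\in A$ and $c\in C$, return the pairing $\langle\delta(c),a\rangle$ or $\langle\gamma(a),c\rangle$ appropriately, so the composites are identities on the nose. Everything here is a formal bookkeeping argument with the canonical maps $\iota$, $j$, and $\varphi$.

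Finally, naturality: for algebra maps $A'\to A$ and coalgebra maps $C\to C'$, one must check the evident square relating $\Phi_{(A,C)}$ and $\Phi_{(A',C')}$ commutes, which reduces to functoriality of $(-)^*$, of $(-)^{\bullet}$ (Lemma~\ref{lemma:4.1}), and of $(-)^*$ on coalgebras, together with naturality of the canonical maps $\iota$. Since $\Phi$ is a natural isomorphism $\nalgk(A,C^*)\cong\ncoalgk(C,A^\bullet)=(\ncoalgk)^{\mathrm{op}}(A^\bullet,C)$, the adjunction $(-)^\bullet\dashv(-)^*$ between $\nalgk$ and $(\ncoalgk)^{\mathrm{op}}$ follows immediately from the definition of adjoint functors via a natural iso on hom-sets.

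The main obstacle, I expect, is not any single hard step but rather the care needed in the first half: verifying that $\gamma^*(\iota_C(C))$ is genuinely a \emph{good} subspace of $A^*$ (so that it lands in $A^\bullet$) requires correctly chasing the diagram built from $\Delta_C$, $m_A^*$, $\varphi_{A,A}$, and $\iota_C$, and one must be slightly attentive to the fact that $\varphi_{A,A}$ is only injective (not surjective) in infinite dimensions, so the factorization must be set up so that the relevant element of $(A\otimes A)^*$ provably lies in the image of $\varphi_{A,A}$ applied to $\gamma^*(\iota_C C)\otimes\gamma^*(\iota_C C)$ — which is exactly what $\Delta_C(c)=\sum c_1\otimes c_2$ supplies. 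Once that factorization is pinned down, the rest is routine diagram-chasing entirely parallel to the classical Sweedler argument \cite[Theorem~6.0.5]{Sweedler} and Michaelis \cite[Theorem on page 15]{Michaelis-LieCoalg}.
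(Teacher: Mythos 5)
Your argument is correct, but it is organized differently from the paper's. You build the hom-set bijection $\Phi_{(A,C)}$ directly, by sending an algebra map $\gamma\colon A\to C^*$ to $\gamma^*\circ\chi_C\colon C\to A^*$, proving its image is a good subspace (hence lands in $A^\bullet$), and exhibiting an explicit two-sided inverse $\delta\mapsto\bigl[a\mapsto(c\mapsto\delta(c)(a))\bigr]$, with mutual inverseness checked by elementwise evaluation. The paper instead constructs the unit $\eta_A=j_A^*\circ\chi_A\colon A\to A^{\bullet*}$ and the counit $\epsilon_C\colon C\to C^{*\bullet}$ as natural transformations and verifies the triangle identities formally, using naturality of $j$ and self-adjointness of $(-)^*$ on $\vectk$. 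The computational core is the same: your key goodness claim is, up to specializing $\gamma=\id{C^*}$, exactly the paper's proof that $\chi_C(C)$ is good in $C^{**}$, and your verification that $\Psi(\delta)$ is multiplicative is the paper's proof that $\eta_A$ is an algebra map, both resting on \eqref{comultiplication} and injectivity of $\varphi_{-,-}$ and $j$. What each approach buys: yours is more elementary and self-contained (no triangle-identity bookkeeping, everything checked pointwise), whereas the paper's explicitly isolates $\eta$ and $\epsilon$ as natural transformations, which pays off later since these very maps are reused in Proposition \ref{prop:barecela}, Theorem \ref{th:finitedual} (via $\xi_A$ and $\vartheta_C$) and in Section \ref{sec:qcadj} to transport reassociators; with your formulation one would have to extract them afterwards as $\Phi_{(C^*,C)}(\id{C^*})$ and $\Psi_{(A,A^\bullet)}(\id{A^\bullet})$. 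One small correction: over a field the canonical map $\chi_C\colon C\to C^{**}$ is always injective, so your parenthetical caveat that it ``need not be injective'' is unfounded, though harmless to the argument.
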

\begin{proof}
Denote by $\chi _{V}:V\longrightarrow V^{\ast \ast }$ the canonical injection, defined for every vector space $V$ by $\chi _{V}(v)(f)=f(v)$ for all $v\in V$ and $f\in V^{\ast }$. Recall that $\chi :(-)\longrightarrow (-)^{\ast \ast }$ is a natural transformation and let us check that if $C$ is a coalgebra then $\chi _{C}\left( C\right) \subseteq C^{\ast \bullet }$ (compare with \cite[Note on page 15]{Michaelis-LieCoalg}). This follows once proved that $\chi _{C}\left( C\right) $ is a good subspace of $C^{\ast \ast }$. Given $c\in C$ and $\phi ,\psi \in C^{\ast }$ we have that:
\begin{eqnarray*}
(m_{C^{\ast }})^{\ast }(\chi _{C}\left( c\right) )(\phi \otimes \psi )
&=&\chi _{C}\left( c\right) (m_{C^{\ast }}(\phi \otimes \psi ))=m_{C^{\ast
}}(\phi \otimes \psi )(c) \\
&=&\sum \phi (c_{1})\psi (c_{2})=\sum \chi _{C}\left( c_{1}\right) (\phi
)\chi _{C}\left( c_{2}\right) (\psi ) \\
&=&\varphi _{C^{\ast },C^{\ast }}\left( \sum \chi _{C}\left( c_{1}\right)
\otimes \chi _{C}\left( c_{2}\right) \right) (\phi \otimes \psi )
\end{eqnarray*}
so that $(m_{C^{\ast }})^{\ast }(\chi _{C}\left( c\right) )=\varphi
_{C^{\ast },C^{\ast }}\left( \sum \chi _{C}\left( c_{1}\right) \otimes \chi
_{C}\left( c_{2}\right) \right) $ for all $c\in C$ and
\begin{equation*}
(m_{C^{\ast }})^{\ast }(\chi _{C}\left( C\right) )\subseteq \varphi
_{C^{\ast },C^{\ast }}(\chi _{C}\left( C\right) \otimes \chi _{C}\left(
C\right) )
\end{equation*}
so that $\chi_C(C)$ is good by definition.  Note also that we have just proved that
\begin{equation}\label{eq:chiC}
(m_{C^{\ast }})^{\ast }\circ \chi_{C}=\varphi _{C^{\ast },C^{\ast }}\circ (\chi _{C}\otimes \chi _{C})\circ
\Delta .
\end{equation}

If we denote by $j_{A}:A^{\bullet }\longrightarrow A^{\ast }$ the inclusion of
the finite dual of a non-associative algebra $A$ into its ordinary
dual, then we have just shown that for any non-coassociative coalgebra $C$, $\chi _{C}$
induces a $\Bbbk $-linear map
$\epsilon _{C}:C\longrightarrow C^{\ast \bullet }$
that is still natural in $C$ and it satisfies
\begin{equation}\label{eq:jC}
j_{C^{\ast }}\circ \epsilon _{C}=\chi _{C}.
\end{equation}
Let us check that $\epsilon _{C}$ actually is a comultiplicative and
counital map. Denote by $\Delta_{C^{\ast \bullet }}$ the comultiplication of $
C^{\ast \bullet }$. This is the only map that satisfies $\varphi _{C^{\ast
},\,C^{\ast }}\circ (j_{C^*}\otimes j_{C^*})\circ \Delta_{C^{\ast \bullet }}=(m_{C^{\ast }})^{\ast }\circ j_{C^{\ast
}}$.

As a consequence:
\begin{eqnarray*}
\varphi _{C^{\ast },\,C^{\ast }}\circ (j_{C^*}\otimes j_{C^*})\circ \Delta_{C^{\ast \bullet }}\circ \epsilon _{C}
&=&(m_{C^{\ast }})^{\ast }\circ j_{C^{\ast }}\circ \epsilon _{C} \stackrel{\eqref{eq:jC}}{=}(m_{C^{\ast
}})^{\ast }\circ \chi _{C} \\
&\stackrel{\eqref{eq:chiC}}{=}&\varphi _{C^{\ast },\,C^{\ast }}\circ (\chi _{C}\otimes \chi _{C})\circ
\Delta \\
&\stackrel{\eqref{eq:jC}}{=}&\varphi _{C^{\ast },\,C^{\ast }}\circ  (j_{C^*}\otimes j_{C^*})\circ  (\epsilon _{C}\otimes \epsilon
_{C})\circ \Delta
\end{eqnarray*}
and, by injectivity of $\varphi _{C^{\ast },\,C^{\ast }}$ and of $j_{C^*}$, we have that $\Delta_{C^{\ast \bullet }}\circ \epsilon _{C}=(\epsilon _{C}\otimes \epsilon _{C})\circ \Delta $.
Moreover, $$\varepsilon_{C^{\ast \bullet}}(\epsilon_C(c))\stackrel{(\dag)}{=}\left(u_{C^\ast}^\bullet(\epsilon_C(c))\right)(1_{\Sscript \Bbbk})=(\epsilon_C(c))\left(u_{C^\ast}(1_{\Sscript \Bbbk})\right)=(\epsilon_C(c))\left(\varepsilon_C\right)=\varepsilon_C(c)$$ for any $c\in C$, where in $(\dag)$ we identified $\Bbbk^*$ with $\Bbbk$. Hence, $\epsilon_C$ is comultiplicative and counital.

On the other hand, the injection $j_{A}:A^{\bullet }\hookrightarrow A^{\ast }$ induces a map $\eta_A:A\rightarrow A^{\bullet \ast }$ given by
\begin{equation}\label{eq:unitdef}
\eta_A\coloneqq j_A^*\circ \chi_A.
\end{equation}

We claim that this is an algebra morphism. Set $\eta=\eta_A$ for shortness. Then,  for all $a,b\in A$ and for any $f\in A^{\bullet }$:
\begin{equation*}
\begin{split}
m_{A^{\bullet \ast }}(\eta(a)\otimes \eta(b))(f) &\,=\,\varphi _{A^{\bullet },\, A^{\bullet }}\big(\eta(a)\otimes \eta(b)\big)\left(\Delta _{A^{\bullet }}(f)\right) \,=\, \sum \eta(a)\left( f_{1}\right) \eta(b)\left( f_{2}\right) \\
&\,=\,\sum f_{1}(a)f_{2}(b)\overset{(\ref{comultiplication})}{=}f(ab) \,=\,\eta(ab)(f).
\end{split}
\end{equation*}

Moreover, $\eta(u(1))(f)=f(1_{A})=\varepsilon _{A^{\bullet}}(f)=(\varepsilon _{A^{\bullet }})^{\ast }(1)(f)=u_{A^{\bullet \ast }}(1)(f)$, for all $f\in A^{\bullet }$.

Let us check finally that $\eta$ and $\epsilon$ satisfy the conditions to be the unit and the counit of the adjunction, respectively. By a direct calculation:
\begin{equation*}
\begin{split}
j_A\circ \eta_A^\bullet\circ\epsilon_{A^\bullet} & \stackrel{\eqref{eq:pallino}}{=} \eta_A^*\circ j_{A^{\bullet *}}\circ\epsilon_{A^\bullet}
 \stackrel{\eqref{eq:jC}}{=} \eta_A^*\circ \chi_{A^\bullet}
 \stackrel{\eqref{eq:unitdef}}{=} \chi_A^*\circ j_{A}^{**}\circ \chi_{A^\bullet}
 \stackrel{(*)}{=} \chi_A^*\circ \chi_{A^*}\circ j_{A} = j_A
\end{split}
\end{equation*}
where in $(*)$ we used the naturality of $j$ and the last equality follows from the fact the $(-)^*$ is adjoint to itself at the level of vector spaces. Therefore, by injectivity of $j_A$, we have that $\eta_A^\bullet\circ\epsilon_{A^\bullet}=\id{A^\bullet}$. For the other composition, let us compute
\begin{equation*}
\epsilon_{C}^*\circ \eta_{C^*} \stackrel{\eqref{eq:unitdef}}{=}  \epsilon_{C}^*\circ j_{C^*}^* \circ \chi_{C^*} \stackrel{\eqref{eq:jC}}{=} \chi_C^*\circ \chi_{C^*} = \id{C^*}
\end{equation*}
and this concludes the proof.
\end{proof}

\subsection{The  functor $(-)^{\bullet}$ and the tensor product}\label{ssec:mb}
Next we study how the functor $(-)^{\bullet}$ behaves with respect to the tensor product of two algebras.

\begin{proposition}\label{prop:barecela}
Let $A$ and $B$ be in $\nalgk$. Then the canonical injection $\varphi _{A, \,B}:A^{\ast }\otimes
B^{\ast }\longrightarrow (A\otimes B)^{\ast }$ of equation \eqref{Eq:varphi} induces the natural isomorphism in $\ncoalgk$
\begin{equation}\label{Eq:barecela}
\varphi _{A,B}^{\prime }\coloneqq (\varphi _{A^{\bullet},B^{\bullet }}\circ (\eta _{A}\otimes \eta _{B}))^{\bullet }\circ \epsilon_{(A^{\bullet }\otimes B^{\bullet })}:A^{\bullet }\otimes B^{\bullet }\overset{\cong }{\longrightarrow }(A\otimes B)^{\bullet }.
\end{equation}
\end{proposition}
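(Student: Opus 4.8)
The morphism $\varphi'_{A,B}$ already lies in $\ncoalgk$ by construction: it is the composite of $\epsilon_{A^{\bullet}\otimes B^{\bullet}}$, which is a morphism of non-coassociative coalgebras by Proposition \ref{prop:adj1}, with $\big(\varphi_{A^{\bullet},B^{\bullet}}\circ(\eta_{A}\otimes\eta_{B})\big)^{\bullet}$, which is a coalgebra morphism by Lemma \ref{lemma:4.1} --- this being applicable because $\eta_{A}\otimes\eta_{B}$ is a morphism in $\nalgk$ (a tensor product of algebra morphisms, by Proposition \ref{prop:adj1}) and $\varphi_{A^{\bullet},B^{\bullet}}\colon A^{\bullet\ast}\otimes B^{\bullet\ast}\to(A^{\bullet}\otimes B^{\bullet})^{\ast}$ is a unital algebra map for the convolution products, a one-line check from the definition of $\Delta_{A^{\bullet}\otimes B^{\bullet}}$. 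Since a bijective morphism in $\ncoalgk$ is automatically an isomorphism there (its inverse linear map is again comultiplicative and counital), it suffices to prove that $\varphi'_{A,B}$ is a bijection; naturality then follows from that of $\varphi$, $\eta$ and $\epsilon$.

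First I would make $\varphi'_{A,B}$ explicit. Set $C\coloneqq A^{\bullet}\otimes B^{\bullet}$ and $g\coloneqq\varphi_{A^{\bullet},B^{\bullet}}\circ(\eta_{A}\otimes\eta_{B})\colon A\otimes B\to C^{\ast}$, so that $\varphi'_{A,B}=g^{\bullet}\circ\epsilon_{C}$. Using $\eta_{A}=j_{A}^{\ast}\circ\chi_{A}$ and $\eta_{B}=j_{B}^{\ast}\circ\chi_{B}$ from \eqref{eq:unitdef}, the identity $j_{C^{\ast}}\circ\epsilon_{C}=\chi_{C}$ from \eqref{eq:jC}, and the relation $j_{A\otimes B}\circ g^{\bullet}=g^{\ast}\circ j_{C^{\ast}}$ from \eqref{eq:pallino}, a routine diagram chase gives
\[
j_{A\otimes B}\circ\varphi'_{A,B}=\varphi_{A,B}\circ(j_{A}\otimes j_{B}),
\]
i.e.\ $\varphi'_{A,B}(f\otimes l)$ is the functional $a\otimes b\mapsto f(a)\,l(b)$ on $A\otimes B$, for $f\in A^{\bullet}$, $l\in B^{\bullet}$. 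In other words, $\varphi'_{A,B}$ is exactly the corestriction to $(A\otimes B)^{\bullet}$ of the restriction of the canonical injection $\varphi_{A,B}$ to $A^{\bullet}\otimes B^{\bullet}$. Injectivity of $\varphi'_{A,B}$ is then immediate, since $\varphi_{A,B}$, $j_{A}\otimes j_{B}$ and $j_{A\otimes B}$ are all injective.

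The crux is surjectivity. The key point is that the two linear maps $\iota_{A}\colon A\to A\otimes B$, $a\mapsto a\otimes 1_{B}$, and $\iota_{B}\colon B\to A\otimes B$, $b\mapsto 1_{A}\otimes b$, are morphisms in $\nalgk$ --- because the multiplication of $A\otimes B$ is componentwise and $1_{A},1_{B}$ are units --- so by Lemma \ref{lemma:4.1} they induce morphisms $\iota_{A}^{\bullet}\colon(A\otimes B)^{\bullet}\to A^{\bullet}$ and $\iota_{B}^{\bullet}\colon(A\otimes B)^{\bullet}\to B^{\bullet}$. Given $h\in(A\otimes B)^{\bullet}$, write $\Delta_{(A\otimes B)^{\bullet}}(h)=\sum h_{1}\otimes h_{2}$ and put $H\coloneqq\sum\iota_{A}^{\bullet}(h_{1})\otimes\iota_{B}^{\bullet}(h_{2})\in A^{\bullet}\otimes B^{\bullet}$. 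Rewriting $j_{A}(\iota_{A}^{\bullet}(h_{1}))=h_{1}\circ\iota_{A}$ and $j_{B}(\iota_{B}^{\bullet}(h_{2}))=h_{2}\circ\iota_{B}$ via \eqref{eq:pallino}, and then invoking the defining property \eqref{comultiplication} of $\Delta_{(A\otimes B)^{\bullet}}$, one computes for all $a\in A$, $b\in B$
\[
\varphi_{A,B}\big((j_{A}\otimes j_{B})(H)\big)(a\otimes b)=\sum h_{1}(a\otimes 1_{B})\,h_{2}(1_{A}\otimes b)=h\big((a\otimes 1_{B})(1_{A}\otimes b)\big)=h(a\otimes b).
\]
Combining this with the previous paragraph and the injectivity of $j_{A\otimes B}$ yields $\varphi'_{A,B}(H)=h$, so $\varphi'_{A,B}$ is onto. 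Bijectivity, hence the claim, follows.

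I expect the main obstacle to be pinning down the surjectivity argument --- specifically the idea of reconstructing an arbitrary $h\in(A\otimes B)^{\bullet}$ as a genuine element of $A^{\bullet}\otimes B^{\bullet}$ by feeding the legs of $\Delta_{(A\otimes B)^{\bullet}}(h)$ through the unital embeddings $\iota_{A}$, $\iota_{B}$; everything else is bookkeeping with the canonical maps $\chi$, $j$, $\eta$, $\epsilon$. One should also record, as a preliminary (or read it off from the construction of $\varphi'_{A,B}$ through $(-)^{\bullet}$), the inclusion $A^{\bullet}\otimes B^{\bullet}\subseteq(A\otimes B)^{\bullet}$ implicit in the codomain of $\varphi'_{A,B}$: this follows directly from the observation that $\varphi_{A,B}(V\otimes W)$ is a good subspace of $(A\otimes B)^{\ast}$ whenever $V\subseteq A^{\ast}$ and $W\subseteq B^{\ast}$ are good, a short computation using $m_{A\otimes B}=(m_{A}\otimes m_{B})\circ(\id{A}\otimes\tau\otimes\id{B})$ and the naturality of $\varphi$.
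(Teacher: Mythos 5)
Your proof is correct and follows essentially the same route as the paper's: you establish the identity $j_{A\otimes B}\circ\varphi'_{A,B}=\varphi_{A,B}\circ(j_{A}\otimes j_{B})$ to get injectivity, and your element $H=\sum\iota_{A}^{\bullet}(h_{1})\otimes\iota_{B}^{\bullet}(h_{2})$ is exactly the paper's inverse map $\psi_{A,B}=((i_{A})^{\bullet}\otimes(i_{B})^{\bullet})\circ\Delta_{(A\otimes B)^{\bullet}}$ evaluated at $h$. The only cosmetic difference is that you argue bijectivity plus the general fact that bijective coalgebra morphisms are isomorphisms, while the paper directly exhibits $\psi_{A,B}$ as a two-sided inverse.
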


\begin{proof}
Observe that for any $C,D$ in $\ncoalgk$, $\varphi_{C,D}:C^*\otimes D^*\to (C\otimes D)^*$ is a morphism in $\nalgk$. Thus the morphism defined in equation \eqref{Eq:barecela} is well-defined. For all $f\in A^{\bullet },g\in B^{\bullet },a\in A,b\in B$ we have that
\begin{eqnarray*}
((\varphi _{A^{\bullet },B^{\bullet }}\circ (\eta _{A}\otimes \eta _{B}))^{\bullet}\circ \epsilon _{(A^{\bullet }\otimes B^{\bullet })})(f\otimes g)(a\otimes b)
&=&(\varphi _{A^{\bullet },B^{\bullet }})^{\bullet }(\epsilon _{(A^{\bullet }\otimes
B^{\bullet })}(f\otimes g))(\eta _{A}(a)\otimes \eta _{B}(b)) \\
&=&\epsilon _{(A^{\bullet }\otimes B^{\bullet })}(f\otimes g)(\varphi _{A^{\bullet},B^{\bullet }}(\eta _{A}(a)\otimes \eta _{B}(b))) \\
&=&\varphi _{A^{\bullet },B^{\bullet }}(\eta _{A}(a)\otimes \eta
_{B}(b))(f\otimes g) \\
&=&\eta _{A}(a)(f)\eta _{B}(b)(g)\overset{(\ref{eq:unitdef})}{=}f(a)g(b) \\
&=&\left( \varphi _{A,B}(f\otimes g)\right) (a\otimes b).
\end{eqnarray*}
Therefore $j_{A\otimes B}\circ \varphi_{A,B}'=\varphi_{A,B}\circ (j_A\otimes j_B)$ and in particular $\varphi_{A,B}'$ is injective. It remains to find an inverse for $\varphi_{A,B}'$. To this aim consider the algebra morphisms
\begin{equation*}
i_{A}:A\longrightarrow A\otimes B:a\longmapsto a\otimes 1\qquad \mathrm{ and } \qquad i_{B}:B\longrightarrow A\otimes B:b\longmapsto 1\otimes b.
\end{equation*}
Then we can consider the map
\begin{equation*}
\xymatrix@C=40pt{\psi _{A,B}\coloneqq  (A\otimes B)^{\bullet } \ar@{->}^-{\Delta _{(A\otimes B)^{\bullet }}}[r] &  (A\otimes B)^{\bullet }\otimes (A\otimes B)^{\bullet } \ar@{->}^-{(i_{A})^{\bullet }\otimes (i_{B})^{\bullet }}[r] & A^{\bullet }\otimes B^{\bullet} .}
\end{equation*}
Note that $\psi _{A,B}$ satisfies, for all $f\in (A\otimes B)^{\bullet }$, $a\in A$, $b\in B$:
\begin{eqnarray*}
\varphi _{A,B}^{\prime }(\psi _{A,B}(f))(a\otimes b) &\coloneqq & \varphi
_{A,B}^{\prime }(((i_{A})^{\bullet }\otimes (i_{B})^{\bullet })(\Delta
_{(A\otimes B)^{\bullet }}(f)))(a\otimes b)=\varphi _{A,B}^{\prime }\left(\sum f_{1}\otimes f_{2}\right) (i_{A}(a)\otimes i_{B}(b)) \\
&=&\sum f_{1}(a\otimes 1)f_{2}(1\otimes b)\overset{(\ref{comultiplication})}{=}f((a\otimes 1)\cdot (1\otimes b))=f(a\otimes b).
\end{eqnarray*}
This means that $\varphi _{A,B}^{\prime }$ is also surjective and hence,  \textit{a fortiori}, it is an isomorphism with inverse $\psi _{A,B}$.
\end{proof}

\begin{remark}\label{rem:diamond}
Let $A$ be in $\nalgk$. We can consider $A^\circ$ as defined in \eqref{eq:usualfinitedual}. It should be observed that in general $A^{\circ }$ is strictly contained in $A^{\bullet }$. To show this take $A=C^*$ for a non-coassociative coalgebra $C$ that is not locally finite, then $A^\bullet$ cannot be locally finite (since $\epsilon _{C}:C\longrightarrow C^{\ast \bullet }$ is injective). At the same time, in view of Remark \ref{rem:oclassic}, $A^{\circ }=\mathrm{Loc}(A^{\bullet})$ so that it is locally finite and hence it cannot coincide with $A^{\bullet}$. We now provide an example of a coalgebra which is not locally finite. Explicitly, consider $C=\Bbbk[X]$ the polynomial ring in the indeterminate $X$ endowed with the comultiplication given by
\begin{gather*}
\Delta(1)=1\otimes 1, \quad \Delta(X)=X\otimes 1+ 1\otimes X, \quad \Delta(X^n)=X^n\otimes 1 +1\otimes X^n + X^{n+1}\otimes X + X\otimes X^{n+1}, \quad n\geq2
\end{gather*}
and the counit given by $\varepsilon(X^n)=\delta_{n,0}$ for all $n\geq 0$. It is easy to check that $(C,\Delta,\varepsilon)$ belongs to $\ncoalgk$. Note that factoring out by the coideal $\Bbbk 1$ and denoting, for $n\geq 0$, by $x_n$ the class of $X^{n+1}$ in the quotient yields the Lie coalgebra $E$ considered in \cite[page 9]{Michaelis-LieCoalg}. As for $E$, one easily proves that $X^2$ does not lie in any finite-dimensional subcoalgebra of $C$. Thus $C$ is not locally finite.
%
%Let $(A,m,u)$ be in $\nalgk$. Then the quadruple $(A,m,\id{A},u)$ defines a unital Hom-algebra and, as in \cite[section 2]
%{Elhamdadi},
\end{remark}

\section{Contravariant adjunction between the categories $\Nalgk$ and $\Ncoalgk$.}\label{sec:calgmu}

In this section we recall the definitions of the category of coalgebras with multiplication and unit and of that of algebras with comultiplication and counit, denoted by $\Nalgk$ and $\Ncoalgk$ respectively. For an object  $C \in \Nalgk$, we  investigate some basic properties of $C^{\bullet }$. The key result will be that, in this context, $C^\bullet$ is an algebra with comultiplication and counit. This will allow us to extend the contravariant adjunction of the previous section to a contravariant adjunction between the categories $\Nalgk$ and $\Ncoalgk$.

Let us start be recalling  from
\cite[Preliminaries]{MajidTannaka},  \cite[Definition XV.1.1 and Proposition XV.1.2]{Kassel}, the following definitions.

\begin{definition}\label{def:cmu} A \emph{coalgebra with multiplication and unit} is a datum $(C,\Delta,\varepsilon,m,u)$ where
\begin{enumerate}
\item the triple $(C,\Delta,\varepsilon)$ is in $\coalgk$;
\item the maps $m\colon C\otimes C\rightarrow C$ and $u\colon \K\rightarrow C$ are morphisms in $\coalgk$, called \emph{multiplication} and \emph{unit} respectively, such that $m$ is unital with unit $u$.
\end{enumerate}
\end{definition}
In other words this a \emph{non-necessarily associative monoid} (or algebra) inside the monoidal category of coassociative and counital coalgebras.
A morphism of coalgebras with multiplication and unit
is a linear map which is compatible with both structures, that is, simultaneously a morphism of coalgebras and of algebras.
The category  so obtained will be denoted by $\Nalgk$.

Dualizing Definition \ref{def:cmu} leads to the construction of the category of algebras with comultiplication and counit denoted by $\Ncoalgk$, whose objects are denoted by $(A,m,u,\Delta, \varepsilon)$. Thus, an object in $\Ncoalgk$ is a non necessarily coassociative comonoid inside the monoidal category of associative and unital  algebras.

\begin{proposition}\label{prop:FinDualCoquasi}
Let $\left( C,\Delta ,\varepsilon,m,u \right) $ be a coalgebra with multiplication and unit. Then $\left( C^{\bullet },\Delta^{\bullet},\varepsilon^{\bullet},m^{\bullet},u^{\bullet }\right) $ is an algebra with comultiplication and counit.  Moreover, this establishes a well-defined contravariant functor
$$
(-)^{\bullet}: \Nalgk \longrightarrow \Ncoalgk.
$$ %\vspace{-0.3cm}
\end{proposition}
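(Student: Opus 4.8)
The plan is to exploit the contravariant adjunction $(-)^{\bullet}\dashv(-)^{\ast}$ of Proposition \ref{prop:adj1} together with the fact that $(-)^{\bullet}$ turns tensor products into tensor products (Proposition \ref{prop:barecela}). The key observation is purely formal: an object $C$ of $\Nalgk$ is a non-associative monoid in the monoidal category $\coalgk$, hence it is given by a coalgebra $C$ together with morphisms $m\colon C\otimes C\to C$ and $u\colon\Bbbk\to C$ \emph{in} $\coalgk$ satisfying the unitality axioms $m\circ(u\otimes C)=\id{C}=m\circ(C\otimes u)$ (these are equalities of coalgebra maps, equivalently of linear maps). Dually, an object of $\Ncoalgk$ is an algebra $A$ together with algebra morphisms $\Delta\colon A\to A\otimes A$ and $\varepsilon\colon A\to\Bbbk$ satisfying the counitality axioms. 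So I want to show that applying $(-)^{\bullet}$ to the data $(C,\Delta_C,\varepsilon_C,m,u)$ produces exactly such a datum on $C^{\bullet}$.

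First I would record that, by Lemma \ref{lemma:4.1}, $C^{\bullet}$ is a (non-coassociative) coalgebra and $(-)^{\bullet}$ is a contravariant functor on non-associative algebras; but here $C$ carries in addition its algebra structure $(C,m,u)$ (forgetting $\Delta_C,\varepsilon_C$), so $C^{\bullet}$ is by construction a coalgebra, i.e.\ $\Delta^{\bullet}\coloneqq\Delta_{C^{\bullet}}$ and $\varepsilon^{\bullet}\coloneqq\varepsilon_{C^{\bullet}}$ are defined and make $(C^{\bullet},\Delta^{\bullet},\varepsilon^{\bullet})$ a coalgebra (in fact coassociative, since $C$ is an associative \ldots wait, $C$ need not be associative: $C^{\bullet}$ is a non-coassociative coalgebra). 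Next, since $\Delta\colon C\to C\otimes C$ is a morphism of \emph{algebras}, Lemma \ref{lemma:4.1} applied to the underlying algebra map gives $\Delta^{\bullet}\colon (C\otimes C)^{\bullet}\to C^{\bullet}$, which I compose with the isomorphism $\varphi'_{C,C}\colon C^{\bullet}\otimes C^{\bullet}\xrightarrow{\ \cong\ }(C\otimes C)^{\bullet}$ of Proposition \ref{prop:barecela} to obtain $m^{\bullet}\coloneqq\Delta^{\bullet}\circ\varphi'_{C,C}\colon C^{\bullet}\otimes C^{\bullet}\to C^{\bullet}$. Similarly $\varepsilon\colon C\to\Bbbk$ is an algebra map, so $\varepsilon^{\bullet}\colon \Bbbk^{\bullet}\to C^{\bullet}$, and identifying $\Bbbk^{\bullet}\cong\Bbbk$ yields $u^{\bullet}\colon\Bbbk\to C^{\bullet}$. (I should be careful about naming: the paper writes $m^{\bullet},u^{\bullet}$ for these; I will match that notation.) Each of $m^{\bullet}$ and $u^{\bullet}$ is a morphism in $\ncoalgk$, i.e.\ a coalgebra map for the $\Delta^{\bullet},\varepsilon^{\bullet}$ structure, because $(-)^{\bullet}$ lands in $\ncoalgk$ and $\varphi'_{C,C}$ is a coalgebra isomorphism by Proposition \ref{prop:barecela}; this is exactly condition (2) (dualized) of Definition \ref{def:cmu}.

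Then I would verify the counitality of $m^{\bullet}$ with counit $u^{\bullet}$, i.e.\ $m^{\bullet}\circ(u^{\bullet}\otimes C^{\bullet})=\id{C^{\bullet}}=m^{\bullet}\circ(C^{\bullet}\otimes u^{\bullet})$. This is obtained by applying the contravariant functor $(-)^{\bullet}$ to the unitality identities $m\circ(u\otimes C)=\id{C}=m\circ(C\otimes u)$ holding in $\Nalgk$, and then using the compatibility of $\varphi'$ with the canonical maps $i_C\colon C\to C\otimes\Bbbk$ etc.; concretely, it is quickest to check it on elements: for $f\in C^{\bullet}$ and $c\in C$, unwind $\big(m^{\bullet}\circ(u^{\bullet}\otimes C^{\bullet})\big)(f)$ via \eqref{comultiplication} and the definition of $\varphi'$ from Proposition \ref{prop:barecela}, reducing it to $\sum\varepsilon_C(c_1)f(c_2)=f(c)$ by counitality of $C$, and symmetrically for the other side. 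The functoriality statement is then immediate: given a morphism $g\colon C\to D$ in $\Nalgk$, it is simultaneously an algebra and a coalgebra map, so $g^{\bullet}\colon D^{\bullet}\to C^{\bullet}$ is a coalgebra map by Lemma \ref{lemma:4.1} and, applying $(-)^{\bullet}$ to the two squares expressing that $g$ commutes with $m,u$ and using naturality of $\varphi'$ (which follows from its construction in Proposition \ref{prop:barecela} out of the natural maps $\eta,\epsilon,\varphi$), one sees $g^{\bullet}$ commutes with $m^{\bullet},u^{\bullet}$; hence $g^{\bullet}$ is a morphism in $\Ncoalgk$, and composition and identities are preserved since they already are by $(-)^{\bullet}$ on non-associative algebras.

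I expect the only real subtlety — not a deep obstacle, but the place where care is needed — to be the bookkeeping around the natural isomorphism $\varphi'$ of Proposition \ref{prop:barecela}: one must make sure it is genuinely natural (so that it turns each structural square for $C$ into a structural square for $C^{\bullet}$) and that it is compatible with the coalgebra structures, so that $m^{\bullet}=\Delta^{\bullet}\circ\varphi'_{C,C}$ really is a morphism in $\ncoalgk$. Since $\varphi'$ is defined as $(\varphi_{C^{\bullet},C^{\bullet}}\circ(\eta_C\otimes\eta_C))^{\bullet}\circ\epsilon_{C^{\bullet}\otimes C^{\bullet}}$ out of the natural transformations $\varphi$, $\eta$ (the unit of the adjunction) and $\epsilon$ (the counit), all of these properties are inherited, so the verification is routine once the identifications are made explicit; everything else is a formal consequence of Lemmas \ref{lemma:4.11}, \ref{lemma:4.1} and Propositions \ref{prop:adj1}, \ref{prop:barecela}.
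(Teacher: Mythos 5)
Your argument never establishes that the product you put on $C^{\bullet}$, namely $\Delta^{\bullet}\circ\varphi'_{C,C}$ with unit $\varepsilon_{C}$, is \emph{associative}, and this is part of what the statement asserts: an object of $\Ncoalgk$ is by definition a (not necessarily coassociative) comonoid in the monoidal category of \emph{associative} unital algebras, so the dualization of condition (1) of Definition \ref{def:cmu} requires $(C^{\bullet},\Delta^{\bullet}\circ\varphi'_{C,C},\varepsilon_{C})$ to lie in $\algk$. You verify unitality (correctly, reducing it to counitality of $\Delta_{C}$), but associativity is nowhere addressed, and it is not a formal consequence of the facts you cite: functoriality of $(-)^{\bullet}$ plus the fact that $\varphi'_{C,C}$ is an isomorphism in $\ncoalgk$ only give the compatibility of the product with $\Delta_{C^{\bullet}}$ and $\varepsilon_{C^{\bullet}}$; to convert coassociativity of $\Delta_{C}$ into associativity of $\Delta^{\bullet}\circ\varphi'_{C,C}$ along your formal route you would also need naturality of $\varphi'$ together with the coherence identity $\varphi'_{C\otimes C,C}\circ(\varphi'_{C,C}\otimes C^{\bullet})=\varphi'_{C,C\otimes C}\circ(C^{\bullet}\otimes\varphi'_{C,C})$, which neither the paper nor your proposal states. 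The paper sidesteps this entirely by proving $j_{C}\circ m_{C^{\bullet}}=m_{C^{*}}\circ(j_{C}\otimes j_{C})$ and $j_{C}(1_{C^{\bullet}})=1_{C^{*}}$, i.e.\ the product on $C^{\bullet}$ is the restriction along the injection $j_{C}$ of the convolution product of $C^{*}$, which is associative because $\Delta_{C}$ is coassociative. The same identification (or the one-line check that both triple products send $c$ to $\sum f(c_{1})g(c_{2})h(c_{3})$) closes the gap, but it has to be said explicitly.

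Apart from this, your route is sound and is essentially a more formal packaging of the paper's proof: where the paper verifies on elements that $\Delta_{C^{\bullet}}$ and $\varepsilon_{C^{\bullet}}$ are multiplicative and unital for the convolution product, you invoke the self-duality of the four compatibility diagrams (``$\Delta,\varepsilon$ are algebra maps'' is the same set of identities as ``$m,u$ are coalgebra maps'') and the fact that $\varphi'$ is an isomorphism in $\ncoalgk$ from Proposition \ref{prop:barecela}; this is legitimate and arguably cleaner. Two bookkeeping slips are worth fixing, though. First, in the paper's tuple $(C^{\bullet},\Delta^{\bullet},\varepsilon^{\bullet},m^{\bullet},u^{\bullet})$, matched against the pattern $(A,m,u,\Delta,\varepsilon)$ for $\Ncoalgk$, it is $\Delta^{\bullet}$ (composed with $\varphi'_{C,C}$) that is the multiplication and $m^{\bullet}$ the comultiplication --- the opposite of the convention you adopt. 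Second, in the functoriality step: dualizing the squares saying that $g$ commutes with $m$ and $u$ only reproves that $g^{\bullet}$ is a coalgebra map, which you already have from Lemma \ref{lemma:4.1}; to see that $g^{\bullet}$ respects the products (the convolution-type multiplications) and the units you must instead use the squares saying that $g$ commutes with $\Delta$ and $\varepsilon$, exactly as in the paper's element-wise computation.
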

\begin{proof}
For the reader's sake, in this proof we will write explicitly the isomorphism $\varphi_{C,\,D}':C^\bullet\otimes D^\bullet \to (C\otimes D)^\bullet$ of equation \eqref{Eq:barecela}, although in the statement we identified the domain and the codomain of this map. For this reason the multiplication of $C^\bullet$ is more precisely $m_{C^\bullet}\coloneqq \Delta^\bullet \circ \varphi_{C,\,C}'$ while its comultiplication is $\Delta_{C^\bullet}\coloneqq \left(\varphi_{C,\,C}'\right)^{-1}\circ m^\bullet$. We compute
\begin{equation*}
j_C\circ m_{C^\bullet}=j_C\circ \Delta^\bullet \circ  \varphi_{C,\,C}' \stackrel{\eqref{eq:pallino}}{=} \Delta^*\circ j_{C\otimes C} \circ \varphi_{C,\,C}' \stackrel{\eqref{Eq:barecela}}{=} \Delta^* \circ \varphi_{C,\,C} \circ (j_C\otimes j_C)=m_{C^*}\circ(j_C\otimes j_C).
\end{equation*}
Furthermore, since $\id{\Bbbk}\in \Bbbk^*=\Bbbk^\bullet$, we can compute $\varepsilon_C^\bullet(\id{\Bbbk})$ obtaining $\varepsilon_C=\varepsilon_C^\bullet(\id{\Bbbk})\in C^\bullet$. Thus we can set $1_{C^\bullet}\coloneqq\varepsilon_C$ and we have that $j_C(1_{C^\bullet})=1_{C^*}$. Since $j_C$ is injective, we deduce that $C^\bullet$ is an algebra. Explicitly, for every $f,g\in C^\bullet$ we have that $m_{C^\bullet}(f\otimes g)$ equals the convolution product $f*g$. Moreover $u^{\ast
}:C^{\ast }\rightarrow \Bbbk ^{\ast }\cong \Bbbk :f\mapsto f\left( 1\right) $
restricts to a map $\varepsilon _{C^{\bullet }}\coloneqq u^{\ast }:C^{\bullet
}\rightarrow \Bbbk $.

For all $x,y\in C$ and $f,g\in C^{\bullet }$, we have
\begin{eqnarray*}
\sum \left( f\ast g\right) _{1}\left( x\right) \cdot \left( f\ast g\right)
_{2}\left( y\right) &=&\left( f\ast g\right) \left( xy\right) =\sum f\left(
\left( xy\right) _{1}\right) \cdot g\left( \left( xy\right) _{2}\right) \\
&=&\sum f\left( x_{1}y_{1}\right) \cdot g\left( x_{2}y_{2}\right) =\sum
f_{1}\left( x_{1}\right) \cdot f_{2}\left( y_{1}\right) \cdot g_{1}\left(
x_{2}\right) \cdot g_{2}\left( y_{2}\right) \\
&=&\sum \left( f_{1}\ast g_{1}\right) \left( x\right) \cdot \left( f_{2}\ast
g_{2}\right) \left( y\right),
\end{eqnarray*}
and $\sum \varepsilon _{1}\left( x\right) \cdot \varepsilon _{2}\left( y\right)
=\varepsilon \left( xy\right) =\varepsilon \left( x\right) \cdot \varepsilon
\left( y\right)$. This implies that
\begin{equation*}
\sum \left( f\ast g\right) _{1}\otimes \left( f\ast g\right) _{2} =\sum
\left( f_{1}\ast g_{1}\right) \otimes \left( f_{2}\ast g_{2}\right) \qquad\mathrm{and}\qquad
\sum \varepsilon _{1}\otimes \varepsilon _{2} =\varepsilon \otimes
\varepsilon.
\end{equation*}
Thus $\Delta _{C^{\bullet }}$ is multiplicative and unital.  Moreover
\begin{equation*}
\varepsilon _{C^{\bullet }}\left( f\ast g\right) =\left( f\ast g\right)
\left( 1\right) =f\left( 1\right) \cdot g\left( 1\right) =\varepsilon
_{C^{\bullet }}\left( f\right) \cdot \varepsilon _{C^{\bullet }}\left( g\right)
\end{equation*}
and $\varepsilon _{C^{\bullet }}\left( \varepsilon \right) =\varepsilon \left(1\right) =1$ so that $\varepsilon _{C^{\bullet }}$ is multiplicative and unital as well.

Take a morphism  $f: C \to D$ in $\Nalgk$.  By Lemma  \ref{lemma:4.1},  we know that $f^{\bullet}:D^{\bullet} \to C^{\bullet} $ is a coalgebra map. It remains to check that it is multiplicative and unital. For every $x\in C$,
\begin{equation*}
f^\bullet\left(1_{D^\bullet}\right)\left(x\right)=
1_{D^\bullet}\big(f\left(x\right)\big)
=\varepsilon_D\big(f\left(x\right)\big)=
\varepsilon_C\left(x\right)=1_{C^\bullet}\left(x\right),
\end{equation*}
where in the third equality we used the fact that $f$ is a counital map. Furthermore, for every $\alpha$ and $\beta$ in $D^\bullet$
\begin{equation*}
f^\bullet\left(\alpha*\beta\right)\left(x\right) =\left(\alpha*\beta\right)\big(f(x)\big) =\sum \alpha\big(f\left(x\right)_1\big)\beta\big(f\left(x\right)_2\big) = \sum\alpha\big(f\left(x_1\right)\big)\beta\big(f\left(x_2\right)\big) = \big(f^\bullet\left(\alpha\right)*f^\bullet\left(\beta\right)\big)(x),
\end{equation*}
where we have used  the fact that  $f$ is comultiplicative.  This establishes the stated functor which is clearly a contravariant one.
\end{proof}

As we have mentioned in Remark \ref{rem:oclassic}, see also the references quoted therein, there is a functor
\begin{equation}\label{Eq:circ}
(-)^{\circ}: \algk \longrightarrow \coalgk,
\end{equation}
which is the restriction of the functor $(-)^{\bullet}$ defined in subsection \ref{ssec:fc}. Even thought we could use the same notation for  these two functors without ambiguity, we preferred  to keep different notations, in order to make clear the distinction   between the associative and the non-associative case.

\begin{lemma}\label{lemma:lift}
The functor $(-)^{\circ}$ is lifted  to a functor $(-)^{\circ}: \Ncoalgk \to \Nalgk$.  That is, we have a commutative diagram
$$
\xymatrix{\Ncoalgk \ar@{-->}^-{(-)^{\circ}} [rr] \ar@{->}^-{}[d] & & \Nalgk  \ar@{->}^-{}[d]  \\ \algk \ar@{->}^-{(-)^{\circ}}[rr] && \coalgk}
$$
where the vertical functors are the forgetful ones.
\end{lemma}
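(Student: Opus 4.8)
The plan is to transport the (non-coassociative) comonoid structure of an object of $\Ncoalgk$ along the functor $(-)^{\circ}$ to a (non-associative) monoid structure on the finite dual, essentially dualizing the proof of Proposition \ref{prop:FinDualCoquasi}. Let $(A,m,u,\Delta,\varepsilon)$ be an object of $\Ncoalgk$, so that $(A,m,u)$ lies in $\algk$ and $\Delta\colon A\to A\otimes A$, $\varepsilon\colon A\to\Bbbk$ are morphisms in $\algk$ satisfying $(\varepsilon\otimes A)\circ\Delta=\id{A}=(A\otimes\varepsilon)\circ\Delta$. By \eqref{Eq:circ}, $(A^{\circ},\Delta_{A^{\circ}},\varepsilon_{A^{\circ}})$ is a (coassociative, counital) coalgebra, i.e.\ an object of $\coalgk$. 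Since $\Delta$ and $\varepsilon$ are algebra maps, applying $(-)^{\circ}$ gives morphisms $\Delta^{\circ}\colon(A\otimes A)^{\circ}\to A^{\circ}$ and $\varepsilon^{\circ}\colon\Bbbk^{\circ}\to A^{\circ}$ in $\coalgk$; composing the first with the coalgebra isomorphism $\varphi_{A,A}'\colon A^{\circ}\otimes A^{\circ}\to(A\otimes A)^{\circ}$ of Proposition \ref{prop:barecela}, and identifying $\Bbbk^{\circ}=\Bbbk^{*}$ with $\Bbbk$, I would set
\[
m_{A^{\circ}}\coloneqq\Delta^{\circ}\circ\varphi_{A,A}'\colon A^{\circ}\otimes A^{\circ}\longrightarrow A^{\circ},\qquad u_{A^{\circ}}\coloneqq\varepsilon^{\circ}\colon\Bbbk\longrightarrow A^{\circ},
\]
which are then morphisms in $\coalgk$ by construction.

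The point to verify is that $m_{A^{\circ}}$ is unital with unit $u_{A^{\circ}}$, so that $(A^{\circ},\Delta_{A^{\circ}},\varepsilon_{A^{\circ}},m_{A^{\circ}},u_{A^{\circ}})$ is an object of $\Nalgk$ in the sense of Definition \ref{def:cmu}. Using $j_{A\otimes A}\circ\varphi_{A,A}'=\varphi_{A,A}\circ(j_A\otimes j_A)$ and that $\Delta^{\circ}$ is the restriction of $\Delta^{*}$, one obtains the convolution-type formula $m_{A^{\circ}}(\xi\otimes\eta)(a)=\sum\xi(a_{1})\eta(a_{2})$ for $\xi,\eta\in A^{\circ}$ and $a\in A$ with $\Delta(a)=\sum a_{1}\otimes a_{2}$, as well as $u_{A^{\circ}}(1_{\Bbbk})=\varepsilon^{*}(\id{\Bbbk})=\varepsilon$ (which indeed belongs to $A^{\circ}$, since $\ker\varepsilon$ is an ideal of codimension one). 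Then $m_{A^{\circ}}\big(u_{A^{\circ}}(1_{\Bbbk})\otimes\xi\big)(a)=\sum\varepsilon(a_{1})\xi(a_{2})=\xi\big(\sum\varepsilon(a_{1})a_{2}\big)=\xi(a)$ by the counitality of $\Delta$, and symmetrically for the other side. Conceptually, this step just records that $\varphi_{-,-}'$ together with $\Bbbk^{\circ}\cong\Bbbk$ make $(-)^{\circ}\colon\algk\to\coalgk$ a contravariant monoidal functor, which therefore sends non-coassociative comonoids in $\algk$ to non-associative monoids in $\coalgk$; the only coherence actually used here is compatibility with the unit constraints.

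It then remains to treat morphisms and the commutative square. If $f\colon A\to B$ is a morphism in $\Ncoalgk$, then $f$ is in particular a morphism in $\algk$, so $f^{\circ}\colon B^{\circ}\to A^{\circ}$ is a morphism in $\coalgk$ by functoriality of $(-)^{\circ}\colon\algk\to\coalgk$; that $f^{\circ}$ is moreover multiplicative and unital with respect to the structures above follows from $f$ being comultiplicative and counital, via the same computation (or again from monoidal functoriality on arrows), so $f^{\circ}$ is a morphism in $\Nalgk$. Finally, the square commutes by construction, because forgetting the monoid structure of $A^{\circ}$ (respectively of $f^{\circ}$) gives back exactly $(-)^{\circ}$ applied to the underlying algebra (respectively algebra map) of $A$ (respectively $f$). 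I do not expect a genuine obstacle here: the only delicate point is the bookkeeping involved in identifying the transported structure maps through $\varphi_{A,A}'$ and $\Bbbk^{\circ}\cong\Bbbk$, that is, the unit-coherence of the monoidal functor $(-)^{\circ}$, which is routine.
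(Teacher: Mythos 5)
Your proposal is correct and follows essentially the same route as the paper's proof: transport the structure maps of $(A,m,u,\Delta,\varepsilon)$ through $(-)^{\circ}$, using the isomorphism of Proposition \ref{prop:barecela} (restricted to the associative setting) and $\Bbbk^{\circ}\cong\Bbbk$, note that the resulting multiplication and unit on $A^{\circ}$ are coalgebra maps by functoriality, deduce unitality from the counitality of $\Delta$, and handle morphisms by the same convolution-type computation as in the last part of Proposition \ref{prop:FinDualCoquasi}. Your write-up merely makes explicit the convolution formula and the identification $u_{A^{\circ}}(1_{\Bbbk})=\varepsilon\in A^{\circ}$, which the paper leaves implicit.
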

\begin{proof}
Take an object $(A, m,u,\Delta, \varepsilon)$ in $\Ncoalgk$. Analogously to subsection \ref{ssec:mb}, we can consider the datum $(A^{\circ}, m^{\circ}, u^{\circ}, \Delta^{\circ}, \varepsilon^{\circ})$, where $(A^{\circ}, m^{\circ}, u^{\circ})$, $\Delta^{\circ}$ and $\varepsilon^{\circ}$ are the images, up to the natural isomorphism of Proposition \ref{prop:barecela},  through the functor  \eqref{Eq:circ}  of  $(A, m,u)$, $\Delta$ and $\varepsilon$, respectively. We know that
$(A^{\circ}, m^{\circ}, u^{\circ})$ is coassociative counital coalgebra, and also that $\Delta^{\circ}$ and $\varepsilon^{\circ}$ are coalgebra maps. The unital property of $\Delta^{\circ}$ with respect to $\varepsilon^{\circ}$ is automatically derived from the counitality of $\Delta$ with respect to $\varepsilon$. Summing up, we have that  $(A^{\circ}, m^{\circ}, u^{\circ}, \Delta^{\circ}, \varepsilon^{\circ})$ is an object  the category $\Nalgk$.
To check that the stated functor is well-defined on morphisms one mimics the last part of the proof of Proposition \ref{prop:FinDualCoquasi}.
\end{proof}

The following is our first main result.

\begin{theorem}\label{th:finitedual}
There is a natural isomorphism:
$$
 \Ncoalgk\Big( A , C^{\bullet} \Big) \,\, \cong \,\,   \Nalgk\Big(C , A^{\circ}\Big)
$$
for every pair of objects $A$ in $\Ncoalgk$ and $C$ in $\Nalgk$.
That is, there is a contravariant adjunction
$$
\xymatrix{ \ar@<+0.9ex>@{->}^-{(-)^{\bullet}}[rr] \Nalgk & & \ar@{->}^-{(-)^{\circ}}[ll]  \Ncoalgk }
$$
where the contravariant functor $(-)^{\circ}$ is defined by Lemma \ref{lemma:lift} and $(-)^{\bullet}$ by Proposition \ref{prop:FinDualCoquasi}.
\end{theorem}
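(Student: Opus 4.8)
The plan is to obtain the isomorphism by restricting the natural bijection $\Phi_{(A,C)}$ of Proposition~\ref{prop:adj1} and verifying that the extra (co)multiplicative structure on objects of $\Nalgk$ and $\Ncoalgk$ is matched under it. I would fix $A$ in $\Ncoalgk$, with underlying associative algebra $(A,m_A,u_A)$ and comonoid part $(A,\Delta_A,\varepsilon_A)$, and $C$ in $\Nalgk$, with underlying coassociative coalgebra $(C,\Delta_C,\varepsilon_C)$ and monoid part $(C,m_C,u_C)$. First I would record, from Lemma~\ref{lemma:lift} and Remark~\ref{rem:oclassic}, that $A^\circ$ has underlying coalgebra $(A,m_A,u_A)^\circ$, multiplication $m_{A^\circ}=\Delta_A^\circ$ acting by $(f\star g)(a)=\sum f(a_1)g(a_2)$ with $\Delta_A(a)=\sum a_1\otimes a_2$, and unit $1_{A^\circ}=\varepsilon_A$; dually, from Proposition~\ref{prop:FinDualCoquasi}, that $C^\bullet$ has underlying convolution algebra $(C^\bullet,\ast,\varepsilon_C)\subseteq(C^*,\ast,\varepsilon_C)$, comultiplication $\Delta_{C^\bullet}=m_C^\bullet$ --- the finite-dual comultiplication of the non-associative algebra $(C,m_C,u_C)$, characterised by \eqref{comultiplication} --- and counit $\varepsilon_{C^\bullet}=u_C^*\colon f\mapsto f(1_C)$.

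Next I would recall from the proof of Proposition~\ref{prop:adj1} that the bijection $\Phi:=\Phi_{(A,C)}\colon\nalgk\big(A,(C,\Delta_C,\varepsilon_C)^*\big)\to\ncoalgk\big((C,\Delta_C,\varepsilon_C),(A,m_A,u_A)^\bullet\big)$ is given by $\Phi(\alpha)(c)(a)=\alpha(a)(c)$, with inverse $\Phi^{-1}(\psi)(a)(c)=\psi(c)(a)$, and that its target may be identified with $\coalgk(C,A^\circ)$ since $A$ is associative (Remark~\ref{rem:oclassic}). Post-composition with $j_C\colon C^\bullet\hookrightarrow C^*$ realises $\Ncoalgk(A,C^\bullet)$ as the subset of those $\alpha\in\nalgk(A,C^*)$ whose image lies in $C^\bullet$ and which are, as maps $A\to C^\bullet$, comultiplicative and counital, whereas $\Nalgk(C,A^\circ)$ is the subset of those $\psi\in\coalgk(C,A^\circ)$ that are multiplicative and unital. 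So the heart of the proof is to show that $\Phi$ restricts to a bijection between these two subsets.

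For that, given $\phi\in\Ncoalgk(A,C^\bullet)$, I would set $\psi:=\Phi(j_C\circ\phi)$, which is automatically a coalgebra map $C\to A^\circ$ and satisfies $\psi(c)(a)=\phi(a)(c)$; comultiplicativity of $\phi$ gives $\Delta_{C^\bullet}(\phi(a))=\sum\phi(a_1)\otimes\phi(a_2)$, and substituting into \eqref{comultiplication} yields $\psi(xy)(a)=\phi(a)(xy)=\sum\phi(a_1)(x)\phi(a_2)(y)=(\psi(x)\star\psi(y))(a)$, while counitality of $\phi$ gives $\psi(1_C)(a)=\varepsilon_{C^\bullet}(\phi(a))=\varepsilon_A(a)=1_{A^\circ}(a)$, so $\psi\in\Nalgk(C,A^\circ)$. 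Conversely, given $\psi\in\Nalgk(C,A^\circ)$ I would put $\alpha:=\Phi^{-1}(\psi)\colon A\to C^*$ (an algebra map, with $\alpha(a)(c)=\psi(c)(a)$) and use multiplicativity of $\psi$ to get $m_C^*(\alpha(a))(x\otimes y)=\alpha(a)(xy)=\psi(xy)(a)=\sum\alpha(a_1)(x)\alpha(a_2)(y)=\varphi_{C,C}\big(\sum\alpha(a_1)\otimes\alpha(a_2)\big)(x\otimes y)$; this exhibits $\alpha(A)$ as a good subspace of $C^*$, so $\alpha(A)\subseteq C^\bullet$, and the very same identity says $\Delta_{C^\bullet}\circ\alpha=(\alpha\otimes\alpha)\circ\Delta_A$, while counitality comes from unitality of $\psi$; hence $\alpha\in\Ncoalgk(A,C^\bullet)$. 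Since $\Phi$ is bijective, these assignments are mutually inverse.

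Finally, naturality in both variables would follow from the symmetric formula $\phi(a)(c)=\psi(c)(a)$ together with the description of $f^\bullet$ and $g^\circ$ as restrictions of transpose maps, just as for $\Phi_{(A,C)}$; this produces the asserted contravariant adjunction. I expect the only real friction to be bookkeeping: making sure that the structure maps of $C^\bullet$ and $A^\circ$ coming from Proposition~\ref{prop:FinDualCoquasi} and Lemma~\ref{lemma:lift} are genuinely the finite-dual (co)multiplications to which \eqref{comultiplication} applies (this is where the identifications $\varphi'_{C,C}$ and $\varphi'_{A,A}$ must be tracked carefully), and checking the automatic inclusion $\alpha(A)\subseteq C^\bullet$ via the good-subspace criterion rather than, as one might fear, having to impose it as a hypothesis.
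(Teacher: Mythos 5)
Your proposal is correct, but it organizes the proof differently from the paper. The paper proves Theorem \ref{th:finitedual} in unit--counit form: it takes the unit $\eta_A$ and counit $\epsilon_C$ from the proof of Proposition \ref{prop:adj1}, shows that $\eta_A$ corestricts to a morphism $\xi_A\colon A\to A^{\circ\bullet}$ in $\Ncoalgk$ (the good-subspace argument is applied to $\mathrm{Im}(\eta_A)\subseteq A^{\circ\ast}$, using multiplicativity of $\eta_A$), defines the counit $\vartheta_C\colon C\to C^{\bullet\circ}$ as in \eqref{Eq:PaoloWoutgelato}, checks naturality, and then verifies the two triangle identities by direct evaluation. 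You instead restrict the hom-set bijection $\Phi_{(A,C)}$ of Proposition \ref{prop:adj1} and check, via the symmetric formula $\phi(a)(c)=\psi(c)(a)$ together with \eqref{comultiplication}, that it carries $\Ncoalgk(A,C^{\bullet})$ onto $\Nalgk(C,A^{\circ})$; here the good-subspace argument appears on the other side (showing $\alpha(A)$ is good in $C^{\ast}$, so that $\alpha$ automatically corestricts to $C^{\bullet}$), and the remaining compatibilities (multiplicativity/comultiplicativity/unitality/counitality on each side) are exactly transposes of one another. Both routes rest on the same ingredients --- Proposition \ref{prop:adj1}, the identification of the structure maps of $C^{\bullet}$ and $A^{\circ}$ from Proposition \ref{prop:FinDualCoquasi} and Lemma \ref{lemma:lift}, and naturality inherited because the new functors act on morphisms as the underlying $(-)^{\bullet}$ and $(-)^{\circ}$ do. What your version buys is economy: the triangle identities need not be re-verified, since they hold already for the unrestricted adjunction and the restricted bijection is in the exact form of the statement. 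What the paper's version buys is the explicit unit and counit $\xi$ and $\vartheta$, which are not a side product here but are used later: Corollary \ref{cor:doubledual} builds the reassociator $\Psi=(\xi_H)^{\otimes 3}(\Phi)$ from $\xi$, and Theorem \ref{them:main} checks precisely that $\xi$ and $\vartheta$ preserve reassociators. If you follow your route, you should still record that the unit and counit of your restricted adjunction are the maps $\xi_A$ and $\vartheta_C$ obtained by transposing identities, so that the later sections can be quoted unchanged; with that remark added, your plan is a complete and correct proof.
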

\begin{proof}
In order to prove the theorem let us consider the unit $\eta_A:A\to A^{\circ*}$ and the counit $\epsilon_C:C\to C^{*\circ}$ defined in \eqref{eq:unitdef} and \eqref{eq:jC} respectively, where $A$ is in $\Ncoalgk$ and $C$ is in $\Nalgk$.

We already know that $\eta_A$ is multiplicative and unital. We claim  that it lands into $A^{\circ\bullet}$, indeed let us show that $\textrm{Im}(\eta_A)$ is a good subspace of $A^{\circ *}$. For all $a\in A$ and $f,g\in A^\circ$ we have that
\begin{equation*}
\begin{split}
m^{\circ *}\left(\eta_A(a)\right)(f\otimes g) & = \eta_A(a)(f*g)=(f*g)(a) = \sum f(a_1)g(a_2) \\
 & = \sum \eta_A(a_1)(f)\eta_A(a_2)(g) = \left(\sum \eta_A(a_1)\otimes\eta_A(a_2)\right)(f\otimes g)
\end{split}
\end{equation*}
i.e.~for all $a\in A$, $m^{\circ *}(\eta_A(a)) \in \varphi_{A^\circ,\,A^\circ}\left(\textrm{Im}(\eta_A)\otimes \textrm{Im}(\eta_A)\right)$ where $\varphi_{-,-}$ is the canonical inclusion of equation \eqref{Eq:varphi}. Therefore we denote by $\xi_A:A\to A^{\circ\bullet}$ the corestriction of $\eta_A$. Observe further that, in particular, this means that $\eta_A$ (and hence $\xi_A$) is comultiplicative. Moreover, $\xi_A$ is also counital since
\begin{equation*}
u^\circ\left(\xi_A(a)\right)=\xi_A(a)\left(1_{A^\circ}\right)=\xi_A(a)\left(\varepsilon\right)=\varepsilon(a)
\end{equation*}
for all $a\in A$. By the foregoing,  $\xi_A$ is a morphism in the category $\Ncoalgk$. Now we can check the naturality in $A$ of $\xi_A$: pick a morphism $f\colon A\rightarrow B$ in $\Ncoalgk$ and consider the diagram
\begin{equation*}
\xymatrix@C=40pt{
A \ar[r]^-{\xi_A} \ar[d]_-{f} & A^{\circ \bullet} \ar[d]^-{f^{\circ \bullet}} \ar[r]^-{j_{A^\circ}} & A^{\circ *} \ar[d]^-{f^{\circ *}}  \\
B \ar[r]_-{\xi_B} & B^{\circ \bullet} \ar[r]_-{j_{B^\circ}} & B^{\circ *}
}
\end{equation*}
The commutativity of the outer diagram encodes the naturality of $\eta$, while the right-hand side diagram follows by \eqref{eq:pallino}. Hence the left-hand side diagram commutes too whence the naturality of $\xi$ is settled.
To construct the counit one proceeds in a very similar way. Explicitly, for an object $(C,\Delta,\varepsilon,m,u)$ in $\Nalgk$, the map $\epsilon_C$ induces the counit which is given by
\begin{equation}\label{Eq:PaoloWoutgelato}
\lfun{\vartheta_C}{C}{C^{\bullet\circ }}{x}{\Big[C^\bullet\rightarrow \K,\, g\mapsto g(x)\Big]}.
\end{equation}

It remains to check  the commutativity of the following two diagrams
\begin{equation*}
\xymatrix@C=40pt{
C^{\bullet \circ \bullet } \ar[r]^-{\left(\vartheta_C\right)^\bullet} & C^\bullet \\
C^\bullet \ar[u]^-{\eta_{C^\bullet}} \ar[ur]_-{=} &
} \qquad  \xymatrix@C=40pt{A^{\circ\bullet\circ} \ar[d]_-{\left(\eta_A\right)^\circ} & A^\circ \ar[l]_-{\vartheta_{A^\circ}} \ar[dl]^-{=} \\
 A^\circ  & }
\end{equation*}
As for the first one, for every $g\in C^\bullet$ and for every $c\in C$ a direct calculation shows that
\begin{equation*}
\left(\vartheta_C\right)^\bullet\left(\eta_{C^\bullet}\left(g\right)\right)(c)=\eta_{C^\bullet}\left(g\right)\left(\vartheta_C(c)\right)=\vartheta_C(c)(g)=g(c)
\end{equation*}
while for the second one, for every $f\in A^\circ$ and for every $a\in A$,
\begin{equation*}
\left(\xi_A\right)^\circ\left(\vartheta_{A^\circ}\left(f\right)\right)(a)=\vartheta_{A^\circ}\left(f\right)\left(\xi_A(a)\right)=\xi_A(a)\left(f\right)=f(a).
\end{equation*}
This finishes the proof.
\end{proof}

\section{Contravariant adjunction between quasi-bialgebras and split dual quasi-bialgebras.}\label{sec:qcadj}

This section contains our  main result. We show that the contravariant adjunction of Theorem \ref{th:finitedual} can be restricted to an adjunction between the category of quasi-bialgebras and that of split dual quasi-bialgebras. The later is a full subcategory of the category of dual quasi-bialgebras characterized by the fact that the 3-cocycle (i.e. the reassociator) splits as a finite sum of tensor products of linear maps.

\subsection{Quasi-bialgebras and dual quasi-bialgebras: Definitions and examples.}\label{ssec:qbalg}
We start by recalling the definition of  the main objects of this section. The definitions presented here are quoted form  \cite{Dri, MajidTannaka, Kassel}.
\begin{definition}[\cite{Dri}]\label{def:qbialg}
A \emph{quasi-bialgebra} is an object $(H,  m, u,\Delta, \varepsilon)$ in the category $\Ncoalgk$ (see after Definition \ref{def:cmu}), endowed with a counital 3-cocycle $\Phi$, i.e. an invertible element in the algebra $H\otimes H\otimes H$ that satisfies
\begin{gather}
\left(H\otimes H\otimes \Delta\right)(\Phi)\cdot \left(\Delta\otimes H\otimes H\right)(\Phi)=(1\otimes \Phi)\cdot (H\otimes \Delta\otimes H)(\Phi)\cdot(\Phi\otimes 1), \label{qb3} \\
(\varepsilon\otimes H\otimes H)(\Phi)\,=\, (H\otimes \varepsilon\otimes H)(\Phi)\,=\,(H\otimes H\otimes \varepsilon)(\Phi)\,=\, 1\otimes 1  , \label{qb4} \\
\Phi\cdot (\Delta\otimes H)(\Delta(h))=(H\otimes \Delta)(\Delta(h))\cdot \Phi. \label{qb2}
\end{gather}
The element $\Phi$ is called the \emph{reassociator}\footnote{In \cite[page 369]{Kassel} this is called the Drinfeld associator.} of the quasi-bialgebra. Obviously, if $\Phi=1 \tensor{} 1\tensor{}1$, then $(H,m,u,\Delta,\varepsilon)$ is a usual bialgebra.
\end{definition}

A linear map $\fk{f}: \left(H, m, u,\Delta, \varepsilon,  \Phi\right) \to \left(H', m', u',\Delta',\varepsilon', \Phi'\right)$ is a \emph{morphism of quasi-bialgebras} if it is a morphism in $\Ncoalgk$ such that
\begin{equation}\label{eq:QuasiMorphism}
\left(\fk{f}\otimes \fk{f}\otimes \fk{f}\right)\left(\Phi\right)=\Phi'.
\end{equation}
The category of quasi-bialgebras and their morphisms will be denoted by $\qbialgk$.

\begin{example}\label{exam:qbial}
Let $C$ be in $\ncoalgk$ and consider the tensor algebra $T(C)$. By the universal property of the tensor algebra, the comultiplication and the counit of $C$ induce a comultiplication and a counit on $T=T(C)$ respectively that make it into an object in $\Ncoalgk$. Suppose that $T$ is in $\qbialgk$. Then it admits a reassociator $\Phi\in T^{\otimes 3}$ but, in view of Corollary \ref{corollario:tensorn}, $\Phi\in \Bbbk\cdot 1\otimes 1\otimes 1$. By \eqref{qb4}, $\Phi=1\otimes 1\otimes 1$ which means that $T$ is in $\bialgk$. This forces $C$ to be in $\coalgk$. Therefore, if we consider $C$ in $\ncoalgk$ but not in $\coalgk$, then $T(C)$ is in $\Ncoalgk$ but not in $\qbialgk$.

We now give an explicit example of such a $C$. Consider the alternative algebra $A$, see Example \ref{exam:alt},  constructed as follows. As a vector space, $A=\K e \oplus \K x\oplus \K y$, with multiplication table given as follows:

\begin{table}[h]
\begin{tabular}{c|c|c|c}
$\cdot$ & $e$ & $x$ & $y$ \\
\hline
$e$ & $e$ & $x$ & $y$ \\
\hline
$x$ & $x$ & $y$ & $x$ \\
\hline
$y$ & $y$ & $x$ & $x$
\end{tabular}
\end{table}

This is a unital, commutative but not associative algebra. Let us take its ordinary linear dual $C=A^\ast=\K E \oplus \K X \oplus \K Y$ where $\left\{E,X,Y\right\}$ is the dual basis. It comes out to be a counital, cocommutative but not coassociative coalgebra. The induced comultiplication and counit are given by
\begin{gather*}
\Delta(X) = X\otimes Y+ Y\otimes X + Y\otimes Y + X\otimes E+E\otimes X, \qquad \varepsilon(X)=0, \\
\Delta(Y) = X\otimes X + Y\otimes E + E\otimes Y, \qquad \varepsilon(Y)=0, \\
\Delta(E) = E\otimes E, \qquad \varepsilon(E)=1.
\end{gather*}

Observe further that cocommutativity ensures that $T(C)$ satisfies the condition to be a coalternative coalgebra, as given explicitly in Example \ref{exam:alt} equation \eqref{Eq:alt}.
\end{example}

Besides algebras with comultiplication and counit that are not quasi-bialgebras, we also have quasi-bialgebras that are not bialgebras.

\begin{example}\label{exam:qnbial}
Let us retrieve a couple of examples.
First we exhibit a way to produce non-trivial quasi-bialgebras from ordinary bialgebras. Following \cite{Dri}, we consider a quasi-bialgebra $(H,m,u,\Delta,\varepsilon,\Phi)$ and we recall that a \emph{twist} on $H$ (also referred to as \emph{gauge transformation}, cf. \cite[Definition XV.3.1]{Kassel}) is an invertible element $F$ of $H\otimes H$ such that
\begin{equation*}
(H\otimes \varepsilon)(F)=1=(\varepsilon\otimes H)(F).
\end{equation*}
Given a twist, we can construct the twisted quasi-bialgebra $H_F\coloneqq (H,m,u,\Delta_F,\varepsilon, \Phi_F)$ where
\begin{equation*}
\Delta_F(a)\coloneqq F\cdot \Delta(a)\cdot F^{-1} \qquad \mathrm{and} \qquad \Phi_F \coloneqq  (1\otimes F)\cdot (A\otimes \Delta)(F)\cdot \Phi\cdot (\Delta\otimes A)(F^{-1})\cdot (F^{-1}\otimes 1).
\end{equation*}
This is still a quasi-bialgebra (cf.~\cite[Proposition XV.3.2]{Kassel}, \cite[page 1422]{Dri}). Now, assume we have $(G,m,u,\Delta,\varepsilon)$ an ordinary bialgebra. We can endow it with a trivial structure of quasi-bialgebra by considering $\Phi=1\otimes 1\otimes 1$. If we take a non trivial twist $F$ on $G$, then $G_F$ is a quasi-bialgebra, but it is not necessarily an ordinary bialgebra. Indeed:
$$\Phi_F=(1\otimes F)\cdot (G\otimes \Delta)(F)\cdot (\Delta\otimes G)(F^{-1})\cdot (F^{-1}\otimes 1)$$
does not equal $1\otimes 1\otimes 1$ in general, and $\Delta_F$ is not coassociative. In such cases, $G_F$ is a non-trivial example of quasi-bialgebra.

Next, let us show a case in which an ordinary bialgebra can be endowed with a non-trivial structure of quasi-bialgebra without changing its underlying structure. This example comes from \cite[Preliminaries 2.3]{EG} (see also \cite[Example 2.5]{BCT}). Let $C_2=\left\langle g\right\rangle$ be the cyclic group of order 2 with generator $g$ and let $\K$ be a field of characteristic different from 2. Consider the group algebra $H(2)\coloneqq \K C_2$ with its ordinary bialgebra structure, i.e., $\Delta(g)=g\otimes g$ and $ \varepsilon(g)=1$. Observe that $H(2)$ is  a two-dimensional commutative algebra. Now, set
$p\coloneqq \frac{1}{2}(1-g)$
and consider the non-trivial reassociator
$$\Psi\coloneqq (1\otimes 1\otimes 1)-2(p\otimes p\otimes p).$$
It can be easily verified that $(H(2),m,u,\Delta,\varepsilon, \Psi)$ satisfies the conditions to be a quasi-bialgebra.

Actually this example can tell us more: $H(2)$ with this non-trivial quasi-bialgebra structure turns out to be not twist equivalent to any ordinary bialgebra (by \emph{twist equivalent} to a bialgebra $G'$ we mean that there exists a twist $F$ on $H(2)$ and an isomorphism of quasi-bialgebras $G'\cong H(2)_F$; cf. \cite[page 1422]{Dri}). Hence this is a genuine example of a quasi-bialgebra. To see this note that $H(2)$ can be endowed with a \emph{quasi-antipode}, i.e. a triple $(s,\alpha,\beta)$ composed by an algebra anti-homomorphism $s:H(2)\rightarrow H(2)$ and two distinguished elements $\alpha$ and $\beta$ such that
\begin{equation*}
\sum s(a_1)\alpha a_2=\varepsilon(a)\,\alpha, \quad \sum a_1\beta s(a_2)=\varepsilon(a)\,\beta, \quad \sum \Phi^1\beta s(\Phi^2)\alpha\Phi^3=1, \quad \sum s(\phi^1)\alpha\phi^2\beta s(\phi^3)=1.
\end{equation*}
where $\sum \Phi^1\otimes \Phi^2\otimes \Phi^3\coloneqq \Phi$ and $\sum \phi^1\otimes \phi^2\otimes \phi^3\coloneqq \Phi^{-1}$ (a quasi-bialgebra with quasi-antipode is usually called a \emph{quasi-Hopf algebra}; cf. \cite[page 1424]{Dri}). In particular, $H(2)$ can be endowed with the quasi-antipode $(\id{H(2)},g,1)$. By \cite[page 1425]{Dri}, it is possible to twist a quasi-Hopf algebra too, and get again a quasi-Hopf algebra. As a consequence, if there were a twist $F$ on $H(2)$ such that $H(2)_F\cong G'$ where $G'$ is an ordinary bialgebra, then $H(2)_F$ would turn out to be an ordinary Hopf algebra. In particular $\beta_F\cdot \alpha_F=1$. However, writing $F\coloneqq \sum F^1\otimes F^2$, $F^{-1}\coloneqq \sum f^1\otimes f^2$ and recalling that $H(2)$ is commutative we get
\begin{equation*}
\beta_F\coloneqq \sum F^1\cdot \beta\cdot  s(F^2)=\sum F^1\cdot F^2, \qquad \alpha_F\coloneqq \sum s(f^1)\cdot \alpha\cdot  f^2=\sum f^1\cdot f^2 \cdot g,
\end{equation*}
and
\begin{equation*}
\beta_F\cdot \alpha_F=\left(\sum F^1\cdot F^2\right)\cdot \left(\sum f^1\cdot f^2 \cdot g\right) = \sum F^1\cdot f^1\cdot F^2 \cdot f^2 \cdot g = g \neq 1.
\end{equation*}
Therefore, $H(2)$ cannot be twist equivalent to any ordinary bialgebra.
\end{example}

For any  quasi-bialgebra $(H, m,u,\Delta, \varepsilon,\Phi)$, we denote by
$$
\Phi\coloneqq  \sum \Phi^1\tensor{}\Phi^2\tensor{}\Phi^3\,=\,\sum \Psi^1\tensor{}\Psi^2\tensor{}\Psi^3\,=\,\sum \Theta^1\tensor{}\Theta^2\tensor{}\Theta^3,
$$
the reassociator $\Phi$ of  Definition \ref{def:qbialg}, whose inverse is
$$
\phi: =\sum \phi^1\tensor{}\phi^2\tensor{}\phi^3\,=\, \sum \psi^1\tensor{}\psi^2\tensor{}\psi^3\,=\, \sum \theta^1\tensor{}\theta^2\tensor{}\theta^3.
$$
This notations will be soon understood.  Explicitly, equations \eqref{qb3}, \eqref{qb4} and \eqref{qb2}  can be rewritten as
\begin{eqnarray*}
\sum\Phi^1\cdot\Psi^1_{\;\Sscript{1}}\tensor{}\Phi^2 \cdot\Psi^1_{\;\Sscript{2}} \tensor{} \Phi^3_{\;\Sscript{1}} \cdot\Psi^2\tensor{}  \Phi^3_{\;\Sscript{2}}\cdot\Psi^3  &=&  \sum\Psi^1\cdot\Theta^1\tensor{}\Phi^1\cdot\Psi^2_{\;\Sscript{1}}\cdot\Theta^2 \tensor{} \Phi^2\cdot\Psi^2_{\;\Sscript{2}}\cdot\Theta^3\tensor{} \Phi^3 \cdot\Psi^3,  \\
\sum\Phi^1 \varepsilon(\Phi^2) \tensor{}\Phi^3 &=& 1\tensor{}1, \\
\sum\Phi^1 h_{\Sscript{1,\, 1}} \tensor{}  \Phi^2 h_{\Sscript{1,\, 2}} \tensor{} \Phi^3 h_{\Sscript{2}} &=& \sum h_{\Sscript{1}}\Phi^1\tensor{}   h_{\Sscript{2,\, 1}}\Phi^2 \tensor{}  h_{\Sscript{2,\, 2}}\Phi^3. \\
\end{eqnarray*}

The dual version of Definition \ref{def:qbialg}, led in \cite{MajidTannaka}  to the notion of \emph{dual quasi-bialgebra}, which, for sake of reader convenience, we recall here with details.

\begin{definition}\label{def:dqb} A \emph{dual quasi-bialgebra} is  an object $(U,\Delta, \varepsilon, m, u)$ in the category $\Nalgk$, endowed with a unital 3-cocycle $\omega$, i.e. a convolution invertible element $\omega \colon U\otimes U\otimes U\rightarrow \K$ that satisfies
\begin{eqnarray}
\Big(\omega \circ (U\otimes U\otimes m) \Big) \ast \Big( \omega \circ \left( m\otimes
U\otimes U\right) \Big) & = & \big( \varepsilon \otimes \omega \big) \,\ast \, \Big( \omega
\circ \left( U\otimes m\otimes U\right)\Big) \, \ast\, \big( \omega \otimes \varepsilon \big)  \label{dqb3} \\
\omega \left( h\otimes k\otimes l\right) & = & \varepsilon
\left( h\right) \varepsilon \left( k\right) \varepsilon \left( l\right),\;
\text{ whenever }\,1_{U}\in \{h,k,l\}\, \subset U \label{dqb4} \\
\Big(u \circ \omega \Big)\ast \Big( m \circ \left( m\otimes U\right) \Big) & = & \Big(m \circ \left( U\otimes m\right)\Big) \ast \Big(u \circ \omega \Big),\label{dqb2}
\end{eqnarray}
where the star $*$ in  equation \eqref{dqb3}  stands for the convolution product of the algebra $(U^{\Sscript{\otimes 4}})^*$, while in equation \eqref{dqb2} it is the convolution product of the non-associative algebra $\vectk\big(U^{\Sscript{\otimes 3}}, U \big)$.
The map $\omega $ is  also called  the \emph{reassociator} of the dual quasi-bialgebra (this is an invertible element in the convolution algebra $(U^{\Sscript{\otimes 3}})^*$).
\end{definition}
A linear map $\varfun{\fk{g}}{\left(U,m, u,\Delta, \varepsilon,\omega\right)}{\left(U',m', u',\Delta', \varepsilon',\omega'\right)}$ is a morphism of dual quasi-bialgebras if it is a morphism in the category $\Nalgk$  satisfying:
\begin{equation}\label{eq:DualQuasiMorphism}
\omega'\circ\left(\fk{g}\otimes \fk{g}\otimes \fk{g}\right)=\omega.
\end{equation}
The category of dual quasi-bialgebras and their morphisms will be denoted by $\dqbialgk$.

On elements, the equations  \eqref{dqb3}, \eqref{dqb4} and \eqref{dqb2}  are written, for all $x,y,z,t\in U$, as
\begin{gather*}
\sum\omega\big( x_{\Sscript{1}}\otimes y_{\Sscript{1}} \otimes z_{\Sscript{1}}t_{\Sscript{1}}\big)\omega\big(x_{\Sscript{2}} y_{\Sscript{2}}\otimes z_{\Sscript{2}}\otimes t_{\Sscript{2}}\big) =  \sum\omega\big( y_{\Sscript{1}}\otimes z_{\Sscript{1}}\otimes t_{\Sscript{1}}\big)\omega\big(x_{\Sscript{1}}\otimes y_{\Sscript{2}} z_{\Sscript{2}}\otimes t_{\Sscript{2}}\big) \omega\big(x_{\Sscript{2}}\otimes y_{\Sscript{3}}\otimes z_{\Sscript{3}}\big), \\
\omega(x\otimes y\otimes 1)=\omega(x\otimes 1\otimes y)=\omega(1\otimes x\otimes y) = \varepsilon(x) \varepsilon(y), \\
\sum \omega\big( x_{\Sscript{1}} \otimes   y_{\Sscript{1}} \otimes z_{\Sscript{1}}\big) \big( x_{\Sscript{2}}   y_{\Sscript{2}}\big) z_{\Sscript{2}} =  \sum x_{\Sscript{1}}\big(y_{\Sscript{1}} z_{\Sscript{1}}\big) \omega\big(  x_{\Sscript{2}} \otimes y_{\Sscript{2}}\otimes  z_{\Sscript{2}} \big).
\end{gather*}

\subsection{The contravariant adjunctions.} We first check that the functor of Lemma \ref{lemma:lift}, leads to a functor from the category $\qbialgk$ to $\dqbialgk$. Consider  a quasi-bialgebra $(H, m, u,\Delta, \varepsilon, \Phi)$. Since the  underlying object $(H, m, u,\Delta, \varepsilon)$ is in $\Ncoalgk$, we can consider its image  $(H^{\circ}, m^{\circ}, u^{\circ},\Delta^{\circ}, \varepsilon^{\circ})$ in $\Nalgk$ by the functor  of Lemma \ref{lemma:lift}.  Set  $U=H^{\circ}$ and consider the natural transformation of \eqref{eq:unitdef} at $H^{\Sscript{ \tensor{} 3}}$
$$
\xymatrix@C=40pt{ H^{\Sscript{\otimes 3}}  \, \ar@{->}^-{\eta_{\Sscript{H^{\tensor{} 3}}}}[r] & \Big(\big( H^{\Sscript{\otimes 3}}\big)^{\circ}\Big)^* \, \overset{\eqref{Eq:barecela}}{\cong} \, \Big( \big( H^{\circ}\big)^{\Sscript{\tensor{} 3}}\Big)^*\,=\, \Big( U^{\Sscript{\tensor{} 3}}\Big)^*,}
$$
which by construction is an algebra map. Therefore, the following  $\Bbbk$-linear map
\begin{equation}\label{Eq:omega}
\omega\coloneqq  \eta_{\Sscript{H^{\tensor{} 3}}} (\Phi): U^{\Sscript{\tensor{}3}} \longrightarrow \Bbbk,\quad \Big( f\tensor{}g \tensor{}h\longmapsto  \sum f(\Phi^1) g(\Phi^2) h(\Phi^3) \Big),
\end{equation}
is an invertible element in the convolution algebra $\big( U^{\Sscript{\tensor{} 3}}\big)^*$, since $\Phi$ is so in the algebra $H^{\Sscript{\tensor{} 3}}$.

We claim that $(U, m^{\circ}, u^{\circ},\Delta^{\circ}, \varepsilon^{\circ}, \omega)$ is now a dual quasi-bialgbra.
Taken an element $x \in H$, we can compute
\begin{eqnarray*}
\Big( \sum \omega(f_{\Sscript{1}}\otimes g_{\Sscript{1}}\otimes h_{\Sscript{1}}\big) \big(f_{\Sscript{2}}  g_{\Sscript{2}}\big) h_{\Sscript{2}}\Big)  (x) &=& \sum f_{\Sscript{1}}(\Phi^1) g_{\Sscript{1}}(\Phi^2) h_{\Sscript{1}}(\Phi^3) f_{\Sscript{2}}(x_{\Sscript{1,\, 1}})g_{\Sscript{2}}(x_{\Sscript{1,\, 2}}) h_{\Sscript{2}}(x_{\Sscript{2}})\\
&\overset{\eqref{qb2}}{=}& \sum f_{\Sscript{1}}(\Phi^1) g_{\Sscript{1}}(\Phi^2) h_{\Sscript{1}}(\Phi^3) f_{\Sscript{2}}( \phi^1 x_{\Sscript{1}}\Phi^1)g_{\Sscript{2}}(\phi^2x_{\Sscript{2,\, 1}}\Phi^2) h_{\Sscript{2}}(\phi^3x_{\Sscript{2,\,2}}\Phi^3) \\
&\overset{\eqref{comultiplication}}{=}&  \sum f(\Phi^1 \phi^1x_{\Sscript{1}}\Phi^1) g(\Phi^2\phi^2 x_{\Sscript{2,\, 1}}\Phi^2) h(\Phi^3\phi^3 x_{\Sscript{2,\,2}}\Phi^3) \\ &=&   \sum f( x_{\Sscript{1}}\Phi^1)g(x_{\Sscript{2,\, 1}}\Phi^2) h(x_{\Sscript{2,\,2}}\Phi^3)  \\
&\overset{\eqref{comultiplication}}{=}& \sum f_{\Sscript{1}}(x_{\Sscript{1}}) g_{\Sscript{1}}(x_{\Sscript{2,\,1}}) h_{\Sscript{1}}(x_{\Sscript{2,\,2}}) f_{\Sscript{2}}( \Phi^1)g_{\Sscript{2}}(\Phi^2) h_{\Sscript{2}}(\Phi^3) \\ &=&  \sum  f_{\Sscript{1}}(x_{\Sscript{1}}) g_{\Sscript{1}}(x_{\Sscript{2,\,1}}) h_{\Sscript{1}}(x_{\Sscript{2,\,2}})\omega\big( f_{\Sscript{2}}\otimes g_{\Sscript{2}}\otimes h_{\Sscript{2}}\big)\\
&=& \Big( \sum f_{\Sscript{1}}\big(g_{\Sscript{1}} h_{\Sscript{1}} \big) \omega\big( f_{\Sscript{2}}\otimes g_{\Sscript{2}}\otimes  h_{\Sscript{2}}\big)\Big) (x)
\end{eqnarray*}
This gives equation \eqref{dqb2} for $(U,\omega)$. Equation \eqref{dqb4} for $(U,\omega)$, follows by:
\begin{equation*}
\omega(f\otimes1\otimes h) = f(\Phi^1)\varepsilon(\Phi^2) h(\Phi^3) \overset{\eqref{qb4}}{=} f(1)h(1) = \varepsilon(f)\varepsilon(h),
\end{equation*}
and similarly when $1$ appears in the other entries.

Let us check equation \eqref{dqb3} for $(U,\omega)$. Considered $f,g,h, e \in U$, we have
\begin{eqnarray*}
\lefteqn{\sum \omega\big( f_{\Sscript{1}}\otimes  g_{\Sscript{1}}\otimes h_{\Sscript{1}}e_{\Sscript{1}}\big)\omega\big(f_{\Sscript{2}} g_{\Sscript{2}}\otimes h_{\Sscript{2}}\otimes  e_{\Sscript{2}}\big) }
\qquad \qquad  \\ &=& \sum f_{\Sscript{1}}(\Phi^1) g_{\Sscript{1}}(\Phi^2) \big(h_{\Sscript{1}}e_{\Sscript{1}}\big)(\Phi^3)\, \big(f_{\Sscript{2}} g_{\Sscript{2}}\big)(\Psi^1) h_{\Sscript{2}}(\Psi^2) e_{\Sscript{2}}(\Psi^3)\\
&=& \sum f_{\Sscript{1}}(\Phi^1) g_{\Sscript{1}}(\Phi^2) h_{\Sscript{1}}(\Phi^3{}_{\Sscript{1}})e_{\Sscript{1}}(\Phi^3{}_{\Sscript{2}})\, f_{\Sscript{2}}(\Psi^1{}_{\Sscript{1}}) g_{\Sscript{2}}(\Psi^1{}_{\Sscript{2}}) h_{\Sscript{2}}(\Psi^2) e_{\Sscript{2}}(\Psi^3) \\
&=& \sum f\big(\Phi^1 \Psi^1{}_{\Sscript{1}}  \big) g\big(\Phi^2 \Psi^1{}_{\Sscript{2}}\big) h\big(\Phi^3{}_{\Sscript{1}} \Psi^2\big)e\big(\Phi^3{}_{\Sscript{2}} \Psi^3 \big)  \\
&\overset{\eqref{qb4}}{=}& \sum f\big(\Psi^1\Theta^1 \big) g\big(\Phi^1 \Psi^2{}_{\Sscript{1}} \Theta^2\big) h\big(\Phi^2 \Psi^2{}_{\Sscript{2}} \Theta^3 \big)e\big(\Phi^3 \Psi^3\big) \\
&=&   \sum  f_{\Sscript{1}}(\Psi^1)f_{\Sscript{2}}(\Theta^1)\, g_{\Sscript{1}}(\Phi^1) g_{\Sscript{2}}(\Psi^2{}_{\Sscript{1}}) g_{\Sscript{3}}(\Theta^2) \, h_{\Sscript{1}}(\Phi^2) h_{\Sscript{2}}(\Psi^2{}_{\Sscript{2}})  h_{\Sscript{3}}(\Theta^3)\,e_{\Sscript{1}}(\Phi^3) e_{\Sscript{2}}( \Psi^3) \\
&=& \sum \Big( g_{\Sscript{1}}(\Phi^1) h_{\Sscript{1}}(\Phi^2) e_{\Sscript{1}}(\Phi^3)\Big)\Big(f_{\Sscript{1}}(\Psi^1)g_{\Sscript{2}}(\Psi^2{}_{\Sscript{1}}) h_{\Sscript{2}}(\Psi^2{}_{\Sscript{2}}) e_{\Sscript{2}}( \Psi^3)\Big)  \Big(f_{\Sscript{2}}(\Theta^1) g_{\Sscript{3}}(\Theta^2)  h_{\Sscript{3}}(\Theta^3) \Big) \\
&=& \sum \Big( g_{\Sscript{1}}(\Phi^1) h_{\Sscript{1}}(\Phi^2) e_{\Sscript{1}}(\Phi^3)\Big)\Big(f_{\Sscript{1}}(\Psi^1)(g_{\Sscript{2}} h_{\Sscript{2}})(\Psi^2) e_{\Sscript{2}}( \Psi^3)\Big)  \Big(f_{\Sscript{2}}(\Theta^1) g_{\Sscript{3}}(\Theta^2)  h_{\Sscript{3}}(\Theta^3) \Big) \\
&=& \sum \omega\big( g_{\Sscript{1}}\otimes h_{\Sscript{1}}\otimes e_{\Sscript{1}}\big)\, \omega\big(f_{\Sscript{1}}\otimes g_{\Sscript{2}} h_{\Sscript{2}}\otimes e_{\Sscript{2}}\big)\, \omega\big(f_{\Sscript{2}}\otimes g_{\Sscript{3}}\otimes h_{\Sscript{3}} \Big),
\end{eqnarray*}
where we have used the convolution product and the formula \eqref{comultiplication}.  This completes the proof of the claim.

Furthermore, it is by definition  that any morphism $\fk{f}: (H,m,u,\Delta,\varepsilon,\Phi)  \longrightarrow (H',m',u',\Delta',\varepsilon',\Phi')$ of quasi-bialgebras,  is a morphism in the category $\Ncoalgk$. Then, by applying the functor $(-)^{\circ}$ of Lemma \ref{lemma:lift}, we get that  $\fk{f}^{\circ}: (H'{}^{\circ},\Delta'{}^{\circ},\varepsilon'{}^{\circ}, m'{}^{\circ},u'{}^{\circ}) \longrightarrow (H^{\circ}, \Delta^{\circ},\varepsilon^{\circ}, m^{\circ},u^{\circ})$ is a morphism in the category $\Nalgk$. Therefore, we only need to check the compatibility condition with reassociators constructed in equation \eqref{Eq:omega}, which is derived as follows:
\begin{eqnarray*}
\omega \Big(\left(\fk{f}^{\circ}\otimes
\fk{f}^{\circ}\otimes\fk{f}^{\circ}\right)\left(f\otimes g \otimes h\right)\Big) & = & \omega \Big(\left(f\circ\fk{f}\right)\otimes \left(g\circ\fk{f}\right)\otimes \left(h\circ\fk{f}\right)\Big) = \left(f\otimes g \otimes h\right)\Big(\left(\fk{f}\otimes \fk{f}\otimes \fk{f}\right)\left(\Phi\right)\Big) \\
 & \stackrel{\eqref{eq:QuasiMorphism}}{=} & \left(f\otimes g \otimes h\right)\left(\Phi'\right) =\omega'\left(f\otimes g \otimes h\right).
\end{eqnarray*}
Hence $\fk{f}$ satisfies \eqref{eq:DualQuasiMorphism} and it is a morphism of dual quasi-bialgebras. Then, we have established a contravariant functor
\begin{equation}\label{Eq:grandine}
(-)^{\circ}:  \qbialgk  \longrightarrow \dqbialgk,
\end{equation}
which obviously converts the following diagram
$$
\xymatrix@C=35pt{  \Ncoalgk \ar@{->}^-{(-)^\circ}[rr]  & & \Nalgk \\ \qbialgk \ar@{-->}^-{(-)^\circ}[rr] \ar@{->}^-{}[u] & & \dqbialgk  \ar@{->}^-{}[u]  }
$$
commutative, where the vertical functors are the canonical forgetful functors.

In the other way around, take an object $(U,\Delta,\varepsilon,m,u,\omega)$ in the category $\dqbialgk$.  Thus, we can take the image of its underlying object $(U,\Delta,\varepsilon,m,u)$ by the functor of Proposition \ref{prop:FinDualCoquasi}, that is the object  $(U^{\bullet},m^{\bullet},u^{\bullet},\Delta^{\bullet},\varepsilon^{\bullet})$ of the category $\Ncoalgk$. The problem now is to construct  a reassociator $\Phi$ for $U^{\bullet}$, i.e. a unital 3-cocycle. It seems that \emph{a priori}  there is no obvious way to deduce this cocycle directly from the starting datum $(U,\Delta,\varepsilon,m,u,\omega)$. To this aim, an assumption should be postulated. First, we consider the following natural transformation:
\begin{equation}\label{Eq:Gamma}
\xymatrix@C=40pt{\zeta:  \big(U^{\bullet}\big){}^{\Sscript{\otimes 3}}  \, \ar@{^{(}->}^-{}[r]^-{(j_U)^{\otimes 3}} & \big(U^*\big) ^{\Sscript{\otimes 3}} \,  \ar@{->}^-{\varphi_{\Sscript{U,\,U}}\tensor{}U^*}[r]  & \big( U\tensor{}U \big)^*\tensor{} U^* \ar@{->}^-{\varphi_{\Sscript{U\tensor{}U},\, U}}[r]   &  \big(U^{\Sscript{\tensor{} 3}}\big)^*, }
\end{equation}
which, up to the isomorphism $\big(U^{\Sscript{\otimes 3}}\big){}^{\bullet} \cong \big(U^{\bullet}\big){}^{\Sscript{\otimes 3}} $ of equation \ref{Eq:barecela},   coincides with the canonical injection of the total good subspace of  $\big(U^{\Sscript{\tensor{} 3}}\big)^*$. Notice that $\zeta$ is an algebra map, as it is a composition of algebra maps. Moreover, it is easily seen that $\zeta$ is in fact a natural transformation at $U$.

\begin{proposition}\label{prop:EssentialImage}
Let $(U,\Delta,\varepsilon,m,u,\omega)$ be a dual quasi-bialgebra. Assume  there exists an invertible element $\Phi\in \left(U^\bullet\right)^{\otimes 3}$ such that $\zeta\left(\Phi\right)=\omega$, then $(U^\bullet ,m^{\bullet},u^{\bullet},\Delta^{\bullet},\varepsilon^{\bullet}, \Phi)$ is a quasi-bialgebra.
\end{proposition}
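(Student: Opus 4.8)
By Proposition~\ref{prop:FinDualCoquasi}, $U^{\bullet}$ carries the structure of an object of $\Ncoalgk$, and $\Phi$ is by hypothesis invertible in $(U^{\bullet})^{\otimes 3}$; hence it remains only to verify the three conditions \eqref{qb3}, \eqref{qb4} and \eqref{qb2} of Definition~\ref{def:qbialg} for $H=U^{\bullet}$ and $\Phi$. Recall from (the proof of) Proposition~\ref{prop:FinDualCoquasi} that the multiplication of $U^{\bullet}$ is the convolution product $f\ast g$ relative to $\Delta_{U}$, with unit $1_{U^{\bullet}}=\varepsilon$, that its comultiplication $\Delta_{U^{\bullet}}$ is the one dual to $m_{U}$ (so that $\sum h_{1}(a)h_{2}(b)=h(ab)$ by \eqref{comultiplication}), and that its counit is $\varepsilon_{U^{\bullet}}\colon h\mapsto h(1_{U})$. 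The backbone of the argument is the family of canonical injective algebra maps $\zeta_{n}\colon (U^{\bullet})^{\otimes n}\to (U^{\otimes n})^{*}$, $n=2,3,4$, defined exactly as $\zeta=\zeta_{3}$ in \eqref{Eq:Gamma} by composing $(j_{U})^{\otimes n}$ with the iterated injections $\varphi_{-,-}$ of \eqref{Eq:varphi}: each $\zeta_{n}$ is injective, is an algebra map (for the componentwise convolution on the source and the convolution relative to $\Delta_{U}^{\otimes n}$ on the target), and satisfies $\zeta_{n}(f_{1}\otimes\cdots\otimes f_{n})(x_{1}\otimes\cdots\otimes x_{n})=f_{1}(x_{1})\cdots f_{n}(x_{n})$. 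Consequently each of the three identities, taking place in some $(U^{\bullet})^{\otimes n}$, can be verified after applying $\zeta_{n}$, i.e.~by evaluation on elementary tensors of $U^{\otimes n}$, and the whole translation between $\Phi$ and $\omega$ is carried by the single equality $\zeta_{3}(\Phi)=\omega$, namely $\omega(x\otimes y\otimes z)=\sum\Phi^{1}(x)\Phi^{2}(y)\Phi^{3}(z)$.

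I would first dispatch \eqref{qb4}. From $(\varepsilon_{U^{\bullet}}\otimes U^{\bullet}\otimes U^{\bullet})(\Phi)=\sum\Phi^{1}(1_{U})\,\Phi^{2}\otimes\Phi^{3}$ in $(U^{\bullet})^{\otimes 2}$, evaluation on $x\otimes y$ yields, via $\zeta_{3}(\Phi)=\omega$ and \eqref{dqb4}, the scalar $\omega(1_{U}\otimes x\otimes y)=\varepsilon(x)\varepsilon(y)$, which is also the value of $1_{U^{\bullet}}\otimes 1_{U^{\bullet}}=\varepsilon\otimes\varepsilon$ on $x\otimes y$; injectivity of $\zeta_{2}$ then gives the first equality of \eqref{qb4}, and the other two follow in the same way from \eqref{dqb4} applied to $\omega(x\otimes 1_{U}\otimes y)$ and to $\omega(x\otimes y\otimes 1_{U})$.

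For \eqref{qb2} I would apply $\zeta_{3}$ to both sides. Iterating \eqref{comultiplication} one gets $\zeta_{3}\big((\Delta_{U^{\bullet}}\otimes U^{\bullet})(\Delta_{U^{\bullet}}(h))\big)(a\otimes b\otimes c)=h((ab)c)$ and $\zeta_{3}\big((U^{\bullet}\otimes\Delta_{U^{\bullet}})(\Delta_{U^{\bullet}}(h))\big)(a\otimes b\otimes c)=h(a(bc))$. Since $\zeta_{3}$ is an algebra map with $\zeta_{3}(\Phi)=\omega$, evaluating on $x\otimes y\otimes z$ and expanding the convolution product of $(U^{\otimes 3})^{*}$ turns the two sides of \eqref{qb2} into $\sum\omega(x_{1}\otimes y_{1}\otimes z_{1})\,h\big((x_{2}y_{2})z_{2}\big)$ and $\sum h\big(x_{1}(y_{1}z_{1})\big)\,\omega(x_{2}\otimes y_{2}\otimes z_{2})$; these coincide, being the images under the linear functional $h$ of the two sides of the elementwise form of \eqref{dqb2}. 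Injectivity of $\zeta_{3}$ then gives \eqref{qb2}.

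Finally, \eqref{qb3} is settled in the same spirit through $\zeta_{4}$. Using $\zeta_{3}(\Phi)=\omega$ together with \eqref{comultiplication}, evaluation on $x\otimes y\otimes z\otimes t$ shows that $\zeta_{4}$ carries $(U^{\bullet}\otimes U^{\bullet}\otimes\Delta_{U^{\bullet}})(\Phi)$, $(\Delta_{U^{\bullet}}\otimes U^{\bullet}\otimes U^{\bullet})(\Phi)$, $(U^{\bullet}\otimes\Delta_{U^{\bullet}}\otimes U^{\bullet})(\Phi)$, $1_{U^{\bullet}}\otimes\Phi$ and $\Phi\otimes 1_{U^{\bullet}}$ to the functionals $x\otimes y\otimes z\otimes t\mapsto\omega(x\otimes y\otimes zt)$, $\omega(xy\otimes z\otimes t)$, $\omega(x\otimes yz\otimes t)$, $\varepsilon(x)\omega(y\otimes z\otimes t)$ and $\omega(x\otimes y\otimes z)\varepsilon(t)$ respectively. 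Applying $\zeta_{4}$ to the two sides of \eqref{qb3}, expanding the iterated convolution of $(U^{\otimes 4})^{*}$ and absorbing the factors $\varepsilon(x_{1})$ and $\varepsilon(t_{3})$ by counitality, the left-hand side becomes $\sum\omega(x_{1}\otimes y_{1}\otimes z_{1}t_{1})\,\omega(x_{2}y_{2}\otimes z_{2}\otimes t_{2})$ and the right-hand side becomes $\sum\omega(y_{1}\otimes z_{1}\otimes t_{1})\,\omega(x_{1}\otimes y_{2}z_{2}\otimes t_{2})\,\omega(x_{2}\otimes y_{3}\otimes z_{3})$; these are equal precisely by the elementwise form of \eqref{dqb3}, and injectivity of $\zeta_{4}$ concludes the proof. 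The one genuinely delicate step is this last matching, i.e.~arranging the Sweedler indices so that the two convolution expansions reproduce \eqref{dqb3} on the nose; everything else is a mechanical transcription through the injective algebra maps $\zeta_{n}$.
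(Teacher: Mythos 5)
Your proposal is correct and follows essentially the same route as the paper: the paper's proof simply states that, via the identity $\omega(x\otimes y\otimes z)=\sum\Phi^{1}(x)\Phi^{2}(y)\Phi^{3}(z)$, equations \eqref{dqb3}, \eqref{dqb4} and \eqref{dqb2} are ``easily transferred'' to \eqref{qb3}, \eqref{qb4} and \eqref{qb2}, and your verification through the injective algebra maps $\zeta_{n}$ is exactly that transfer carried out in detail. No gap; you have merely made explicit the evaluation-on-elementary-tensors argument the paper leaves implicit.
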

\begin{proof}
Write $\Phi=\sum \Phi^1\tensor{}\Phi^2\tensor{}\Phi^3$.   Then $\omega(x\tensor{}y\tensor{}z)\,=\, \sum \Phi^1(x)\Phi^2(y)\Phi^3(z)$, for every $x, y, z \in U$. Using this equality, equations  \eqref{dqb3},\eqref{dqb4}) and \eqref{dqb2} are easily transferred to equations \eqref{qb3}, \eqref{qb4} and \eqref{qb2}, respectively. This concludes the proof.
\end{proof}

\begin{corollary}\label{cor:doubledual}
Let $(H,m,u,\Delta,\varepsilon,\Phi)$ be a quasi-bialgebra. Then $(H^{\circ\bullet},m^{\circ\bullet},u^{\circ\bullet},\Delta^{\circ\bullet},\varepsilon^{\circ\bullet})$ is still a quasi-bialgebra with reassociator $\Psi\coloneqq \left(\xi_H\right)^{\otimes 3}(\Phi)$, where $\xi$ is the unit of the adjunction of Theorem \ref{th:finitedual}.
\end{corollary}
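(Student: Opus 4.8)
The plan is to derive the corollary as a direct application of Proposition~\ref{prop:EssentialImage}. Put $U\coloneqq H^{\circ}$. By the verification of \eqref{dqb3}, \eqref{dqb4} and \eqref{dqb2} performed just before Proposition~\ref{prop:EssentialImage}, the datum $\left(U,m^{\circ},u^{\circ},\Delta^{\circ},\varepsilon^{\circ},\omega\right)$ is a dual quasi-bialgebra, where $\omega=\eta_{H^{\otimes 3}}(\Phi)$ is the reassociator described in \eqref{Eq:omega}. Its image under the functor of Proposition~\ref{prop:FinDualCoquasi} is precisely $\left(H^{\circ\bullet},m^{\circ\bullet},u^{\circ\bullet},\Delta^{\circ\bullet},\varepsilon^{\circ\bullet}\right)$. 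Hence, by Proposition~\ref{prop:EssentialImage}, it is enough to exhibit an invertible element $\Psi\in\left(U^{\bullet}\right)^{\otimes 3}=\left(H^{\circ\bullet}\right)^{\otimes 3}$ with $\zeta(\Psi)=\omega$, where $\zeta$ is the algebra map of \eqref{Eq:Gamma}; such a $\Psi$ is then the asserted reassociator. The candidate I would use is $\Psi\coloneqq\left(\xi_H\right)^{\otimes 3}(\Phi)$, where $\xi_H\colon H\to H^{\circ\bullet}$ is the $H$-component of the unit $\xi$ of the adjunction of Theorem~\ref{th:finitedual}.

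I would first dispatch invertibility. By the proof of Theorem~\ref{th:finitedual} the map $\xi_H$ is a morphism in $\Ncoalgk$, hence in particular a homomorphism of associative algebras, so $\left(\xi_H\right)^{\otimes 3}\colon H^{\otimes 3}\to\left(H^{\circ\bullet}\right)^{\otimes 3}$ is one as well. As $\Phi$ is invertible in the algebra $H^{\otimes 3}$, its image $\Psi$ is invertible in $\left(H^{\circ\bullet}\right)^{\otimes 3}$, with inverse $\left(\xi_H\right)^{\otimes 3}\bigl(\Phi^{-1}\bigr)$. Moreover, since $\xi_H$ takes values in $H^{\circ\bullet}=U^{\bullet}$, the element $\Psi$ automatically lies in $\left(U^{\bullet}\right)^{\otimes 3}$ and not merely in $\left(U^{*}\right)^{\otimes 3}$.

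Then I would establish $\zeta(\Psi)=\omega$ by unwinding the definitions on elements. By \eqref{eq:unitdef}, for every $x\in H$ the functional $\eta_H(x)$ is nothing but the evaluation map $U=H^{\circ}\to\Bbbk$, $f\mapsto f(x)$; and, $\xi_H$ being the corestriction of $\eta_H$, one has $j_{H^{\circ}}\bigl(\xi_H(x)\bigr)=\eta_H(x)$ inside $U^{*}$. Writing $\Phi=\sum\Phi^1\otimes\Phi^2\otimes\Phi^3$, this gives $\left(j_{H^{\circ}}\right)^{\otimes 3}(\Psi)=\sum\eta_H(\Phi^1)\otimes\eta_H(\Phi^2)\otimes\eta_H(\Phi^3)$, and composing with $\varphi_{U,\,U}\otimes U^{*}$ and then $\varphi_{U\otimes U,\,U}$, that is, applying $\zeta$ as in \eqref{Eq:Gamma}, produces the functional on $U^{\otimes 3}$ carrying $f\otimes g\otimes h$ to $\sum f(\Phi^1)\,g(\Phi^2)\,h(\Phi^3)$. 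By \eqref{Eq:omega} this is exactly $\omega(f\otimes g\otimes h)$, so $\zeta(\Psi)=\omega$ and Proposition~\ref{prop:EssentialImage} concludes.

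The only step that requires genuine care is the last one: keeping the web of canonical identifications in order — the isomorphism $\left(U^{\bullet}\right)^{\otimes 3}\cong\left(U^{\otimes 3}\right)^{\bullet}$ of \eqref{Eq:barecela} implicit in Proposition~\ref{prop:EssentialImage}, the inclusions $j$, and the evaluation maps $\chi$ — so as to be certain that the functional on $U^{\otimes 3}$ which $\zeta$ produces out of $\left(\xi_H\right)^{\otimes 3}(\Phi)$ is literally the map $\omega$ of \eqref{Eq:omega}. Everything else is purely formal.
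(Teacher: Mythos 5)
Your proposal is correct and follows essentially the same route as the paper: the paper's own proof simply observes that $(H^{\circ},\omega)$ is a dual quasi-bialgebra with $\omega=\zeta\bigl(\left(\xi_H\right)^{\otimes 3}(\Phi)\bigr)$ and then invokes Proposition~\ref{prop:EssentialImage}. Your explicit verifications of the invertibility of $\left(\xi_H\right)^{\otimes 3}(\Phi)$ and of the identity $\zeta\bigl(\left(\xi_H\right)^{\otimes 3}(\Phi)\bigr)=\omega$ are exactly the details the paper leaves implicit, and they are carried out correctly.
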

\begin{proof}
We already know from \eqref{Eq:grandine} that $(H^{\circ},m^{\circ},u^{\circ},\Delta^{\circ},\varepsilon^{\circ},\omega)$ is a dual quasi-bialgebra with reassociator given by $\omega=\zeta\left(\left(\xi_H\right)^{\otimes 3}(\Phi)\right)$. Now apply Proposition \ref{prop:EssentialImage} to conclude.
\end{proof}

Let us denote by $\sdqbialgk$ the full subcategory of the category $\dqbialgk$ whose objects are \emph{split dual quasi-bialgebras}, i.e.~dual quasi-bialgebras $(U,\Delta,\varepsilon,m,u,\omega)$ such that there exists an invertible element $\Phi\in \left(U^\bullet\right)^{\otimes 3}$  with  $\zeta\left(\Phi\right)=\omega$. In this way the assignment described in Proposition \ref{prop:EssentialImage} yields the functor
\begin{equation}\label{Eq:Logic}
(-)^{\bullet}: \sdqbialgk \longrightarrow \qbialgk,
\end{equation}
acting by identity on morphisms.
We are led to the following  main result.

\begin{theorem}\label{them:main}
The contravariant adjunction of Theorem \ref{th:finitedual}
induces the  contravariant adjunction
$$
\xymatrix{ \ar@<+0.9ex>@{->}^-{(-)^{\bullet}}[rr] \sdqbialgk & & \ar@{->}^-{(-)^{\circ}}[ll]  \qbialgk }
$$
where the contravariant functor $(-)^{\circ}$ is the one of \eqref{Eq:grandine}, and $(-)^\bullet$ is the one of \eqref{Eq:Logic}.
\end{theorem}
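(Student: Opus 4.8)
The plan is to show that the natural hom-set bijection of Theorem \ref{th:finitedual} restricts to the full subcategories $\qbialgk\subseteq\Ncoalgk$ and $\sdqbialgk\subseteq\Nalgk$. Concretely, that bijection $\Ncoalgk(A,C^{\bullet})\cong\Nalgk(C,A^{\circ})$ sends $f\colon A\to C^{\bullet}$ to $f^{\circ}\circ\vartheta_C$ and, inversely, $h\colon C\to A^{\circ}$ to $h^{\bullet}\circ\xi_A$, where $\xi$ and $\vartheta$ are the unit and the counit (the latter given by \eqref{Eq:PaoloWoutgelato}) of the adjunction of Theorem \ref{th:finitedual}. Since $\qbialgk(A,C^{\bullet})$ and $\sdqbialgk(C,A^{\circ})$ are, by definition of the categories involved, subsets of $\Ncoalgk(A,C^{\bullet})$ and $\Nalgk(C,A^{\circ})$ respectively, it is enough to check that these two assignments carry one subset into the other (in both directions); the triangle identities and the naturality are then inherited verbatim from Theorem \ref{th:finitedual}.

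First I would verify that the functor \eqref{Eq:grandine} corestricts to a functor $(-)^{\circ}\colon\qbialgk\to\sdqbialgk$, the functor $(-)^{\bullet}\colon\sdqbialgk\to\qbialgk$ of \eqref{Eq:Logic} being already at hand. Given $(H,m,u,\Delta,\varepsilon,\Phi)$ in $\qbialgk$, the reassociator $\omega$ of $H^{\circ}$ produced in \eqref{Eq:omega} equals $\zeta\big((\xi_H)^{\otimes 3}(\Phi)\big)$, as already noted in the proof of Corollary \ref{cor:doubledual}; moreover $(\xi_H)^{\otimes 3}(\Phi)$ is invertible in $\big(H^{\circ\bullet}\big)^{\otimes 3}$ because $\xi_H$, being a morphism in $\Ncoalgk$, is in particular multiplicative. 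Hence $H^{\circ}$ is split. The splitting element is unique, since $\zeta$ is a composite of the injections $(j_U)^{\otimes 3}$, $\varphi_{U,U}\otimes U^{*}$ and $\varphi_{U\otimes U,U}$, hence injective.

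Next I would show that $\xi_A$ is a morphism of quasi-bialgebras for $A$ in $\qbialgk$ and that $\vartheta_C$ is a morphism of dual quasi-bialgebras for $C$ in $\sdqbialgk$. For the first, $\xi_A\colon A\to A^{\circ\bullet}$ is a morphism in $\Ncoalgk$ by Theorem \ref{th:finitedual}, and the required identity $(\xi_A)^{\otimes 3}(\Phi_A)=\Psi$ with $\Psi$ the reassociator of $A^{\circ\bullet}$ is precisely the statement of Corollary \ref{cor:doubledual}. For the second, write the (unique) splitting element of $C$ as $\Phi=\sum\Phi^1\otimes\Phi^2\otimes\Phi^3\in(C^{\bullet})^{\otimes 3}$, so that $\zeta(\Phi)=\omega_C$ and, by Proposition \ref{prop:EssentialImage}, $C^{\bullet}$ carries the reassociator $\Phi$. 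By Theorem \ref{th:finitedual}, $\vartheta_C\colon C\to C^{\bullet\circ}$ is a morphism in $\Nalgk$; the compatibility with reassociators is the short computation, for $x,y,z\in C$,
\begin{align*}
\omega_{C^{\bullet\circ}}\big(\vartheta_C(x)\otimes\vartheta_C(y)\otimes\vartheta_C(z)\big) &=\sum\vartheta_C(x)(\Phi^1)\,\vartheta_C(y)(\Phi^2)\,\vartheta_C(z)(\Phi^3) \\
&=\sum\Phi^1(x)\,\Phi^2(y)\,\Phi^3(z)=\omega_C(x\otimes y\otimes z),
\end{align*}
where the first equality is the explicit description \eqref{Eq:omega} of the reassociator of $C^{\bullet\circ}$ applied to the quasi-bialgebra $C^{\bullet}$, the second uses \eqref{Eq:PaoloWoutgelato}, and the last is the relation $\omega_C=\zeta(\Phi)$ evaluated on $x\otimes y\otimes z$.

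Granting all this, the conclusion is immediate: if $f\colon A\to C^{\bullet}$ is a morphism of quasi-bialgebras then $f^{\circ}$ is a morphism of dual quasi-bialgebras by functoriality of \eqref{Eq:grandine} while $\vartheta_C$ is one by the previous step, so $f^{\circ}\circ\vartheta_C$ lies in $\sdqbialgk(C,A^{\circ})$; symmetrically, $h^{\bullet}\circ\xi_A$ lies in $\qbialgk(A,C^{\bullet})$ whenever $h$ is a morphism of (split) dual quasi-bialgebras, by functoriality of \eqref{Eq:Logic}. Thus the bijection of Theorem \ref{th:finitedual} restricts to a natural isomorphism $\qbialgk(A,C^{\bullet})\cong\sdqbialgk(C,A^{\circ})$, which is the asserted contravariant adjunction. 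The one step that is not purely formal is the first: recognising that the reassociator built in \eqref{Eq:omega} is genuinely of split type with witnessing element lying inside $\big(H^{\circ\bullet}\big)^{\otimes 3}$, so that $(-)^{\circ}$ and $(-)^{\bullet}$ are composable as functors between $\qbialgk$ and $\sdqbialgk$; I expect this to be the main (and essentially only) obstacle, the rest being already recorded in Corollary \ref{cor:doubledual} and Proposition \ref{prop:EssentialImage} or a routine transcription.
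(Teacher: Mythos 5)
Your proposal is correct and follows essentially the same route as the paper: everything reduces to checking that the unit $\xi$ and the counit $\vartheta$ of the adjunction of Theorem \ref{th:finitedual} preserve reassociators, with the unit handled (as in the paper) by Corollary \ref{cor:doubledual} and the rest being formal functoriality. The only differences are cosmetic: you verify the compatibility of $\vartheta_C$ by a direct element computation using \eqref{Eq:omega} and \eqref{Eq:PaoloWoutgelato}, where the paper argues via naturality of $\zeta$ and the triangle identity $\left(\vartheta_U\right)^\bullet\circ\xi_{U^\bullet}=\id{U^\bullet}$, and you make explicit the corestriction of $(-)^{\circ}$ to $\sdqbialgk$ and the uniqueness of the splitting element (injectivity of $\zeta$), which the paper leaves implicit.
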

\begin{proof}
The only thing we need to check is that the unit and the counit of the adjunction of Theorem \ref{th:finitedual} preserve the reassociator of a quasi-bialgebra and the one of a dual quasi-bialgebra respectively. For the unit, which is given by $$\xi : \id{\Ncoalgk} \longrightarrow  (-)^{\bullet} \circ (-)^{\circ}$$ as in the proof of Theorem \ref{th:finitedual},  this follows directly from Corollary \ref{cor:doubledual}.

As for the counit  $$\vartheta: \id{\Nalgk} \longrightarrow (-)^{\circ} \circ (-)^\bullet$$ which is given by \eqref{Eq:PaoloWoutgelato}, consider a dual quasi-bialgebra $(U,\Delta,\varepsilon,m,u,\omega)$ in $\sdqbialgk$; this means that there exists an element $\Phi\in \left(U^\bullet\right)^{\otimes 3}$ such that $\zeta(\Phi)=\omega$ and that $(U^\bullet,\Delta^\bullet,\varepsilon^\bullet,m^\bullet,u^\bullet,\Phi)$ is a quasi-bialgebra, where $\zeta$ is the natural transformation of \eqref{Eq:Gamma}.
From the definition of the functor in \eqref{Eq:grandine}, we have that the reassociator for the dual quasi-bialgebra $(U^{\bullet\circ},\Delta^{\bullet\circ},\varepsilon^{\bullet\circ},m^{\bullet\circ},u^{\bullet\circ})$ is clearly given by $\zeta\left(\left(\xi_{U^\bullet}\right)^{\otimes 3}(\Phi)\right)$. Now observe that the  following computation
\begin{eqnarray*}
\zeta\left(\big(\xi_{U^\bullet}\big)^{\otimes 3}(\Phi)\right)\circ\left(\vartheta_{U}\right)^{\otimes 3} & = & \left(\left(\vartheta_{U}\right)^{\otimes 3}\right)^\ast\left(\zeta\left(\big(\xi_{U^\bullet}\big)^{\otimes 3}(\Phi)\right)\right) \stackrel{(\textrm{nat. of }\zeta)}{=}  \zeta\left(\big(\left(\vartheta_{U}\right)^\bullet\big)^{\otimes 3}\left(\big(\xi_{U^\bullet}\big)^{\otimes 3}(\Phi)\right)\right) \\
 & = & \zeta\left(\big(\left(\vartheta_U\right)^\bullet\circ\xi_{U^\bullet}\big)^{\otimes 3}(\Phi)\right) = \zeta(\Phi)=\omega
\end{eqnarray*}
shows that $\vartheta$ preserves reassociators as desired. Hence, the unit comes out to be a quasi-bialgebra map and the counit a dual quasi-bialgebra map, settling the adjunction.
\end{proof}

\begin{remark}
Recall that a subcategory $\mathcal{B}$ of a category $\mathcal{A}$ is \emph{closed under sources} whenever for any morphism $f\colon a\to b$ in $\mathcal{A}$, if $b$ is in $\mathcal{B} $ then $a$ is in $\mathcal{B}$. Let us check that $\sdqbialgk$ is closed under sources when regarded as a subcategory of $\dqbialgk$. Let $\fk{g}\colon (U^\prime,\omega^\prime) \rightarrow (U,\omega)$ be a morphism in $\dqbialgk$ such that $(U,\omega)$ is an object in $\sdqbialgk$.  By assumption, there exists  $\Phi= \sum \Phi^1\otimes \Phi^2\otimes \Phi^3\in \big(U^\bullet\big)^{\otimes 3}$ such that $\omega =\zeta\left(\Phi\right)$. Since $\fk{g}$ preserves the reassociator, we have that
\begin{multline*}
\omega'= \omega\circ\left(\fk{g}^{\otimes 3}\right) = \zeta(\Phi)\circ\left(\fk{g}^{\otimes 3}\right)   = \varphi'{}_{\Sscript{U^\prime\otimes U^\prime,U^\prime}}\circ\left(\varphi'{}_{\Sscript{U^\prime,U^\prime}}\otimes U^{\prime\ast}\right)\Big(\left(\Phi^1\circ \fk{g}\right)\otimes \left(\Phi^2\circ \fk{g}\right)\otimes \left(\Phi^3\circ \fk{g}\right)\Big) =\zeta\left(\big(\fk{g}^{\bullet}\big)^{\otimes 3}\left(\Phi\right)\right),
\end{multline*}
where $\varphi'_{-,-}$ is the natural transformation of equation \eqref{Eq:barecela}. This means that
$\omega^\prime$ itself comes out to be the image by $\zeta$ of  $\big(\fk{g}^{\bullet}\big)^{\otimes 3}\left(\Phi\right)$ that lies in $\big(U^\prime\big)^\bullet$. Therefore,  $(U^\prime,\omega^\prime)$ belongs to $\sdqbialgk$.

Let us observe briefly that $\sdqbialgk$ is a proper subcategory of $\dqbialgk$: in fact the subsequent example exhibits a dual quasi-bialgebra whose reassociator does not split. This means moreover that this particular dual quasi-bialgebra cannot be the finite dual of a quasi-bialgebra (in view of the definition of the reassociator given in \eqref{Eq:omega}).
\end{remark}

\begin{example}
Let $\Bbbk $ be a field and consider $\Bbbk[X]$ the ring of
polynomials in one indeterminate $X$ with the monoid bialgebra structure,
i.e. $\Delta \left( X\right) =X\otimes X$,  $\varepsilon \left( X\right) =1$.
Let us consider a map $\varphi :\Bbbk [X]\longrightarrow \Bbbk $ not in $\Bbbk [X]^{\circ }$, the ordinary
finite dual of $\Bbbk [X]$ (which, in this case, coincides with $\Bbbk
[X]^{\bullet }$), and such that $\varphi(1)=1$, $\varphi \left( X^{n}\right) \neq 0$
for all $n\geq 1$. Let us build a 3-cocycle $\omega $ that does not split by mean of $\varphi $. Recalling that a basis for $\Bbbk [X]\otimes \Bbbk \lbrack
X]\otimes \Bbbk [X]$ is given by the elements $X^{n}\otimes
X^{k}\otimes X^{m}$ for $m,k,n\geq 0$, let us define $\omega $ on this basis
as follows, and then extend it by linearity. For all $m,n,k\geq 0$ let
us set:%
\begin{gather*}
\omega \left( 1\otimes X^{n}\otimes X^{m}\right) = \omega \left(
X^{n}\otimes 1\otimes X^{m}\right) = \omega \left( X^{n}\otimes
X^{m}\otimes 1\right)\coloneqq 1 ; \\
\omega \left( X^{n}\otimes X^{k+1}\otimes X^{m}\right) \coloneqq \varphi \left(
X^{k}\right) ^{-2}\varphi \left( X^{n+k}\right) \varphi \left(
X^{m+k}\right) .
\end{gather*}

Observe that the given comultiplication ensures that we have%
\begin{equation*}
\omega ^{-1}\left( X^{n}\otimes X^{k}\otimes X^{m}\right) =\omega \left(
X^{n}\otimes X^{k}\otimes X^{m}\right) ^{-1}=\frac{1}{\omega \left(
X^{n}\otimes X^{k}\otimes X^{m}\right) }
\end{equation*}%
for all $m,k,n\geq 0$. Now, let us show that $\omega$ is actually a unital
3-cocycle. It is unital by definition. If $0\in \{m,n,r,s\}$ then we trivially have
$$\omega \left( X^{m}\otimes X^{r}\otimes X^{s}\right) \omega \left(X^{n}\otimes X^{m+r}\otimes X^{s}\right) \omega \left( X^{n}\otimes X^{m}\otimes X^{r}\right)=\omega \left( X^{n}\otimes X^{m}\otimes X^{r+s}\right) \omega \left(X^{n+m}\otimes X^{r}\otimes X^{s}\right).$$
For all $m,n,r,s\geq 1$ we have
\begin{equation*}
\begin{split}
\lefteqn{\omega \left( X^{m}\otimes X^{r}\otimes X^{s}\right) \omega \left(X^{n}\otimes X^{m+r}\otimes X^{s}\right) \omega \left( X^{n}\otimes X^{m}\otimes X^{r}\right)} \\
&={\varphi \left( X^{r-1}\right) ^{\Sscript{-2}}\varphi \left( X^{m+r-1}\right) \varphi \left( X^{s+r-1}\right) \varphi \left( X^{m+r-1}\right) ^{\Sscript{-2}}\varphi \left( X^{n+m+r-1}\right) \varphi \left( X^{s+m+r-1}\right)\varphi \left(X^{m-1}\right) ^{\Sscript{-2}} \varphi \left( X^{n+m-1}\right) \varphi \left(X^{r+m-1}\right)}{} \\
&=\varphi \left( X^{m-1}\right) ^{\Sscript{-2}}\varphi \left( X^{n+m-1}\right)
\varphi \left( X^{s+m+r-1}\right) \varphi \left( X^{r-1}\right) ^{\Sscript{-2}}\varphi
\left( X^{s+r-1}\right) \varphi \left( X^{n+m+r-1}\right) \\
&=\omega \left( X^{n}\otimes X^{m}\otimes X^{r+s}\right) \omega \left(
X^{n+m}\otimes X^{r}\otimes X^{s}\right) .
\end{split}
\end{equation*}
This proves that $\omega$ is a $3$-cocycle. If $\omega\in \Bbbk[X]^{\bullet }\otimes \Bbbk [X]^{\bullet }\otimes \Bbbk
[X]^{\bullet }$, then
$$\varphi=\omega(-\otimes X\otimes X)=\left(\Bbbk[X]^\bullet \otimes \eta(X)\otimes \eta(X)\right)(\omega)\in \Bbbk[X]^\bullet$$
where $\eta=\eta_{\Bbbk[X]}$ is the map defined in equation \eqref{eq:unitdef}, a contradiction.
Since the comultiplication $\Delta $ is cocommutative, the datum $\left(
\Bbbk [X],m,u,\Delta ,\varepsilon ,\omega \right) $ defines a dual
quasi-bialgebra whose reassociator does not split, as desired. An example of a map $\varphi $ as above is exhibited in Lemma \ref{lemma:fact}.
\end{example}

\begin{remark}\label{remark:final}
As  we mentioned above, starting from a quasi-bialgebra  $(U,\Delta,\varepsilon,m,u,\omega)$, the construction of a reassociator for $U^{\bullet}$ is not at all clear and perhaps an impossible task. This in fact is connected   to a certain problem of localization in non-commutative algebras as follows. Precisely, we are asking for the construction of an invertible element $\Phi$ in a certain algebra $R$ (in our case $R=(U^{\bullet})^{\Sscript{\otimes 3}}$), by only knowing the existence of an invertible element $\omega$ in an algebra extension $T$ of $R$ (in our case $T=(U^{\Sscript{\otimes 3}})^*$ using the algebra map $\zeta: R \to T$ of equation \eqref{Eq:Gamma}).  In our opinion this construction is not at all realistic except perhaps in some very concrete situation.
This is why we think that Theorem \ref{them:main} was not established in a naive way and that it is the best result which can be extracted from this theory.
\end{remark}

\appendix

\section{New characterization}

In this section we give an alternative description of the finite dual in the non-associative case. Given a linear map, several useful criteria are shown in order to guarantee that this map belongs to the finite dual. Further characterizations can be found in \cite{Anquela}.

Given a vector space $V$ and  $S \subseteq V^*$, we denote by
$$S^{\Sscript{\perp}}\coloneqq \big\{ v \in V |\,\, s(v)=0, \forall s \in S \big\}.$$

For every $a\in A$ in an algebra $A$ and $f\in A^{\ast },$ we define in $A^{\ast }$ the elements $a\rightharpoonup f$ and $f\leftharpoonup a$ by setting, for every $b\in A$
\begin{equation}\label{Eq:quantumComp}
\left( a\rightharpoonup f\right) \left( b\right) \coloneqq f\left( ba\right) \qquad \text{and}\qquad \left( f\leftharpoonup a\right) \left( b\right) \coloneqq f\left( ab\right).
\end{equation}
Furthermore, the vector subspace of $A^*$ generated by the set $ \{ a\rightharpoonup f|\, a \in A\}$ will be simply denoted by $A\rightharpoonup f$. A similar notation will be adopted for the right action $\leftharpoonup$. The subsequent lemma is an analogue of \cite[Proposition 6.0.3]{Sweedler} or \cite[Lemma 9.1.1]{Montgomery}  and can be proved by the same argument.

\begin{lemma}\label{lem:A->f}
Let $f\in A^{\ast }.$ Then  the following are equivalent.
\begin{itemize}
\item[$(1)$] $m^{\ast }\left( f\right) \in \mathrm{Im}\left( \varphi
_{A,\, A}\right) $.

\item[$(2)$] $\dim _{\Bbbk }\big( A\rightharpoonup f\big) <\infty $.

\item[$\left( 3\right) $] $\dim _{\Bbbk }\big( f\leftharpoonup A\big)
<\infty $.
\end{itemize}
\end{lemma}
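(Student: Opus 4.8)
The plan is to establish the cycle of implications $(1)\Rightarrow(2)\Rightarrow(1)$ and, by the symmetric argument, $(1)\Rightarrow(3)\Rightarrow(1)$, exactly as in the classical associative case; the key point is that associativity is never used in Sweedler's proof of \cite[Proposition~6.0.3]{Sweedler}, so the same manipulations survive verbatim in the non-associative setting. First I would unwind the definitions: assuming $(1)$, write $m^*(f)=\varphi_{A,A}\left(\sum_{i=1}^n g_i\otimes h_i\right)$ with the $g_i$ (or the $h_i$) chosen linearly independent, so that for all $a,b\in A$ one has $f(ab)=\sum_i g_i(a)h_i(b)$. Then for each $a\in A$ and all $b\in A$,
\[
\left(a\rightharpoonup f\right)(b)=f(ba)=\sum_i g_i(b)\,h_i(a),
\]
which exhibits $a\rightharpoonup f=\sum_i h_i(a)\,g_i$ as a linear combination of the fixed finite set $\{g_1,\dots,g_n\}$. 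Hence $A\rightharpoonup f\subseteq \mathrm{span}_\Bbbk\{g_1,\dots,g_n\}$ is finite-dimensional, giving $(2)$. The analogous computation $\left(f\leftharpoonup a\right)(b)=f(ab)=\sum_i g_i(a)h_i(b)$ shows $f\leftharpoonup A\subseteq \mathrm{span}_\Bbbk\{h_1,\dots,h_n\}$, yielding $(3)$.

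For the converse direction $(2)\Rightarrow(1)$, I would pick a basis $\{g_1,\dots,g_n\}$ of the finite-dimensional space $A\rightharpoonup f$ and complete the coordinate functions: choose $a_1,\dots,a_n\in A$ with $a_j\rightharpoonup f=g_j$, and let $h_1,\dots,h_n\in A^*$ be defined by writing, for an arbitrary $b\in A$, the element $b\rightharpoonup f\in A\rightharpoonup f$ in the chosen basis as $b\rightharpoonup f=\sum_j h_j(b)\,g_j$; the $h_j$ are linear in $b$ because the map $b\mapsto b\rightharpoonup f$ is linear and taking coordinates in a fixed basis is linear. Evaluating at $a\in A$ gives
\[
f(ab)=\left(b\rightharpoonup f\right)(a)=\sum_j h_j(b)\,g_j(a)=\sum_j g_j(a)\,h_j(b),
\]
which is precisely the statement that $m^*(f)=\varphi_{A,A}\left(\sum_j g_j\otimes h_j\right)\in\mathrm{Im}(\varphi_{A,A})$, so $(1)$ holds. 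The implication $(3)\Rightarrow(1)$ is identical with the roles of the two tensor factors interchanged, using $f\leftharpoonup a$ in place of $a\rightharpoonup f$.

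I do not expect a genuine obstacle here: the only subtlety is purely bookkeeping, namely making sure that in each direction the finite spanning set is independent of the variable being quantified over (the $g_i$ do not depend on $a$ in the computation of $a\rightharpoonup f$), which is exactly what lets one conclude finite-dimensionality or, conversely, extract a finite factorization. It is worth remarking that $(1)$ is by definition the condition ``$f$ lies in a good subspace'' localized at $f$ — more precisely, $(1)$ says $m^*(f)\in\varphi_{A,A}(A^*\otimes A^*)$ — so this lemma gives the promised elementwise criterion for membership in $A^\bullet$, since $f\in A^\bullet$ if and only if $f$ lies in some good subspace, and one checks directly that $\mathrm{span}_\Bbbk\{f\}+(A\rightharpoonup f)+(f\leftharpoonup A)$ together with its image under further translations generates a good subspace whenever $(2)$ (equivalently $(3)$) holds.
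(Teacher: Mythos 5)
Your proof of the equivalence $(1)\Leftrightarrow(2)\Leftrightarrow(3)$ is correct and is exactly the argument the paper has in mind: the paper gives no written proof but refers to \cite[Proposition 6.0.3]{Sweedler} and \cite[Lemma 9.1.1]{Montgomery}, whose manipulations you reproduce, and indeed associativity plays no role in them.

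However, your closing remark overreaches and is false in the non-associative setting: conditions $(2)$--$(3)$ do \emph{not} imply $f\in A^{\bullet}$. The paper warns precisely about this right after the lemma: one cannot conclude that $\dim_{\Bbbk}\big(A\rightharpoonup(f\leftharpoonup A)\big)<\infty$ or $\dim_{\Bbbk}\big((A\rightharpoonup f)\leftharpoonup A\big)<\infty$ from $(1)$--$(3)$, because the associative identity $b\rightharpoonup(a\rightharpoonup f)=(ba)\rightharpoonup f$ is exactly what fails; hence the span of $f$ and its iterated translates need neither be finite-dimensional nor good. The correct elementwise criterion for membership in $A^{\bullet}$ is the one of Proposition~\ref{pro:dim}, namely $\dim_{\Bbbk}\big((A^{\mathrm{e}})^{\otimes n}\blacktriangleright f\big)<\infty$ for \emph{every} $n\in\mathbb{N}$, which is strictly stronger than condition $(2)$; the lemma only enters its proof as the one-step criterion. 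So keep the proof of the equivalence as is, but drop (or correct) the final sentence.
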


One cannot expect, as in the case of associative algebras \cite[Lemma 9.1.1]{Montgomery}, that the equivalent conditions (1)-(3) in Lemma \ref{lem:A->f}, imply either that $\dim _{\Bbbk }\Big( A\rightharpoonup ( f\leftharpoonup A)\Big)<\infty $  or that $\dim _{\Bbbk }\Big( (A\rightharpoonup  f)\leftharpoonup A\Big)<\infty $.  Nevertheless, the converse remains true.

\subsection{The tensor algebra and finite codimensional subspaces}\label{not:blacktriangle}
Let $V$ and $W$ be vector spaces endowed with a
$\Bbbk $-linear map $\phi _{V,W}^{1}:V\rightarrow \mathrm{End}_{\Bbbk }\left(
W\right) .$ Then this map induces a unique algebra map $\phi _{V,W}:T\left(V\right) \rightarrow \mathrm{End}_{\Bbbk }\left( W\right) ^{\mathrm{op}}$
such that $\left( \phi _{V,W}\right) _{\mid V}=\phi _{V,W}^{1}\,$and $\left(\phi _{V,W}\right) _{\mid \Bbbk }$ is the unit $\Bbbk \rightarrow \mathrm{End}_{\Bbbk }\left( W\right) ^{\mathrm{op}}:k\mapsto k\id{W},$ where $T(-)$ stands for the tensor algebra functor.

Then $W$ becomes a right $T\left(
V\right) $-module via $\blacktriangleleft $ defined, for every $z\in T\left(
V\right) ,w\in W$, by setting
\begin{equation*}
w\blacktriangleleft z\coloneqq \phi _{V,W}\left( z\right) \left( w\right) .
\end{equation*}%
Hence we can consider the left $T\left( V\right) $-module structure on $W^{\ast }$ uniquely defined by setting
\begin{equation*}
\left( z\blacktriangleright f\right) \left( w\right) \coloneqq f\left(
w\blacktriangleleft z\right) ,\text{ for every }z\in T\left( V\right) ,w\in
W,f\in W^{\ast }.
\end{equation*}

\begin{example}
Consider the so-called enveloping algebra $A^{\mathrm{e}}\coloneqq A\otimes A^{\mathrm{op}}$ as $V$ and $A$ as $W$. Then one can consider the map
\begin{equation*}
\phi _{V,W}^{1}:A^{\mathrm{e}}\rightarrow \mathrm{End}_{\Bbbk }\left(
A\right) :l\otimes r\mapsto \left[ a\mapsto r\left( al\right) \right] .
\end{equation*}
For shortness, we set
\begin{equation*}
\phi _{A}^{1}\coloneqq \phi _{V,W}^{1}\qquad \text{and}\qquad \phi _{A}\coloneqq \phi _{V,W}.
\end{equation*}
In particular, for every $l,r\in A,$ we get
\begin{equation}
x\blacktriangleleft \left( l\otimes r\right) =\phi _{A}\left( l\otimes
r\right) \left( x\right) =\phi _{A}^{1}\left( l\otimes r\right) \left(
x\right) =r\left( xl\right)  \label{form:Black2}
\end{equation}
and
\begin{equation*}
\left( \left( l\otimes r\right) \blacktriangleright f\right) \left( a\right)
=f\left( a\blacktriangleleft \left( l\otimes r\right) \right) \overset{(\ref{form:Black2})}{=}f\left( r\left( al\right) \right) =\left( l\rightharpoonup
\left( f\leftharpoonup r\right) \right) \left( a\right)
\end{equation*}%
so that%
\begin{equation}
\left( \left( l\otimes r\right) \blacktriangleright f\right) =\left(
l\rightharpoonup \left( f\leftharpoonup r\right) \right). \label{form:Balck4}
\end{equation}
\end{example}

For a subset $S \subseteq T(A^{\rm e})$ and an element $f \in A^*$, we denote by $ S  \blacktriangleright f$ the vector subspace of $A^*$ spanned by the set of elements $\{ s \blacktriangleright f|\, s \in S\}$.

\begin{proposition}\label{pro:dim}
Let $\left( A,m,u\right) $ be in $\nalgk$ . Then
\begin{equation*}
A^{\bullet }\,\,\overset{\eqref{MicFinDual}}{=}\,\, \sum_{V\in \mathcal{G}}V \,\,=\,\, \Big\{ f\in A^{\ast }\mid \dim _{\Bbbk }\left( \left( A^{\mathrm{e}}\right) ^{\otimes n}\blacktriangleright f\right) <\infty,\, \text{ for every }\, n\in\, \mathbb{N}\Big\} .
\end{equation*}
\end{proposition}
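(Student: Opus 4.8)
The first equality is the definition \eqref{MicFinDual}, so the task is the second equality; write $D$ for the set on its right-hand side. The plan is to prove the two inclusions $A^{\bullet}\subseteq D$ and $D\subseteq A^{\bullet}$ using two observations. First, \eqref{form:Balck4} reads $(l\otimes r)\blacktriangleright f = l\rightharpoonup(f\leftharpoonup r)$; taking $l=1_{A}$ or $r=1_{A}$ and using unitality of $A$ (so that $1_{A}\rightharpoonup h=h$ and $h\leftharpoonup 1_{A}=h$) shows that the assignments $g\mapsto l\rightharpoonup g$ and $g\mapsto g\leftharpoonup r$ are particular $\blacktriangleright$-actions, and that $A^{\mathrm{e}}\blacktriangleright U=A\rightharpoonup(U\leftharpoonup A)$ for every subspace $U\subseteq A^{*}$. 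Second, since $A^{*}$ is a left $T(A^{\mathrm{e}})$-module, one has $(A^{\mathrm{e}})^{\otimes 0}\blacktriangleright f=\Bbbk f$ and the recursion $(A^{\mathrm{e}})^{\otimes(n+1)}\blacktriangleright f=A^{\mathrm{e}}\blacktriangleright\big((A^{\mathrm{e}})^{\otimes n}\blacktriangleright f\big)$.

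For $A^{\bullet}\subseteq D$ I would exploit that $A^{\bullet}$ is itself a good subspace. If $g\in A^{\bullet}$, write $m^{*}(g)=\varphi_{A,A}\big(\sum_{i}h_{i}\otimes h_{i}'\big)$ with finitely many $h_{i},h_{i}'\in A^{\bullet}$; evaluating on $a\otimes b$ gives $a\rightharpoonup g=\sum_{i}h_{i}'(a)\,h_{i}$ and $g\leftharpoonup a=\sum_{i}h_{i}(a)\,h_{i}'$, so $A\rightharpoonup g$ and $g\leftharpoonup A$ are finite-dimensional subspaces of $A^{\bullet}$. Hence, for any finite-dimensional $U\subseteq A^{\bullet}$, applying this to a basis of $U$ and then to a basis of $U\leftharpoonup A$ shows that $A^{\mathrm{e}}\blacktriangleright U=A\rightharpoonup(U\leftharpoonup A)$ is again a finite-dimensional subspace of $A^{\bullet}$. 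Starting from $(A^{\mathrm{e}})^{\otimes 0}\blacktriangleright f=\Bbbk f\subseteq A^{\bullet}$ and iterating the recursion, induction on $n$ gives $\dim_{\Bbbk}\big((A^{\mathrm{e}})^{\otimes n}\blacktriangleright f\big)<\infty$ for all $n$, that is $f\in D$.

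For $D\subseteq A^{\bullet}$, given $f\in D$ I would produce a good subspace containing $f$, namely $V\coloneqq T(A^{\mathrm{e}})\blacktriangleright f=\sum_{n\ge 0}(A^{\mathrm{e}})^{\otimes n}\blacktriangleright f$. It contains $f$, and being a $T(A^{\mathrm{e}})$-submodule of $A^{*}$ it is stable under $g\mapsto l\rightharpoonup g$ and $g\mapsto g\leftharpoonup r$, so $A\rightharpoonup g\subseteq V$ and $g\leftharpoonup A\subseteq V$ for all $g\in V$. Each $g\in V$ lies in $V_{N}\coloneqq\sum_{n=0}^{N}(A^{\mathrm{e}})^{\otimes n}\blacktriangleright f$ for some $N$, and $V_{N}$ is finite-dimensional because $f\in D$; since $g\leftharpoonup r=(1_{A}\otimes r)\blacktriangleright g$ raises the tensor degree by one, $g\leftharpoonup A\subseteq V_{N+1}$ is finite-dimensional. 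Picking a basis $e_{1},\dots,e_{m}$ of $g\leftharpoonup A$ and writing $g\leftharpoonup a=\sum_{i}c_{i}(a)\,e_{i}$ with $c_{i}\in A^{*}$, one obtains $m^{*}(g)=\varphi_{A,A}\big(\sum_{i}c_{i}\otimes e_{i}\big)$; choosing $a_{j}\in A$ with $e_{i}(a_{j})=\delta_{ij}$ gives $c_{j}=a_{j}\rightharpoonup g\in A\rightharpoonup g\subseteq V$. Therefore $m^{*}(g)\in\varphi_{A,A}(V\otimes V)$ for every $g\in V$, i.e.\ $V$ is good, whence $f\in V\subseteq A^{\bullet}$.

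The routine steps — linearity of the one-sided actions in their functional argument, the identification $A^{\mathrm{e}}\blacktriangleright U=A\rightharpoonup(U\leftharpoonup A)$, and the bookkeeping with tensor degrees — are straightforward. The one point that needs care is the inclusion $D\subseteq A^{\bullet}$: one must ensure that $m^{*}(g)$ lands in $\varphi_{A,A}(V\otimes V)$ and not merely in $\mathrm{Im}(\varphi_{A,A})$, for which Lemma \ref{lem:A->f} by itself does not suffice; this is exactly what the explicit expression $m^{*}(g)=\varphi_{A,A}(\sum_{i}c_{i}\otimes e_{i})$ with $e_{i}\in g\leftharpoonup A$ and $c_{i}\in A\rightharpoonup g$, combined with the stability of $V$ under both $\rightharpoonup$ and $\leftharpoonup$, delivers.
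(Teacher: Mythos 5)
Your proof is correct and follows essentially the same route as the paper's: the easy inclusion by exploiting goodness, and the hard one by showing that $V=T(A^{\mathrm{e}})\blacktriangleright f$ is a good subspace containing $f=1\blacktriangleright f$. The only differences are cosmetic streamlinings: for $A^{\bullet}\subseteq D$ you use the goodness of the maximal good subspace $A^{\bullet}$ together with the recursion $(A^{\mathrm{e}})^{\otimes(n+1)}\blacktriangleright f=A^{\mathrm{e}}\blacktriangleright\big((A^{\mathrm{e}})^{\otimes n}\blacktriangleright f\big)$ and $A^{\mathrm{e}}\blacktriangleright U=A\rightharpoonup(U\leftharpoonup A)$ in place of the paper's filtration by words $W_{n}(f)$ in $L\cup R$, and for $D\subseteq A^{\bullet}$ you place both tensor legs of $m^{\ast}(g)$ in $V$ at once (basis $e_{i}$ of $g\leftharpoonup A$ and dual elements giving $c_{j}=a_{j}\rightharpoonup g$), whereas the paper first invokes Lemma \ref{lem:A->f} and then concludes via $\left( A^{\ast}\otimes V\right)\cap\left( V\otimes A^{\ast}\right)=V\otimes V$.
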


\begin{proof}
Set $T\coloneqq T\left( A^{\mathrm{e}}\right) $. We write a generator of $\left( A^{\mathrm{e}}\right) ^{\otimes i}$ in the
form $\left( l_{1}\otimes r_{1}\right) \otimes \cdots \otimes \left(
l_{i}\otimes r_{i}\right) $ where $l_{1},\ldots ,l_{i}\in A$ and $%
r_{1},\ldots ,r_{i}\in A^{\mathrm{op}}.$ Note that%
\begin{eqnarray*}
\left[ \phi _{A}\left( 1\otimes r\right) \circ \phi _{A}\left( l\otimes
1\right) \right] \left( a\right) &=&\left[ \phi _{A}^{1}\left( 1\otimes
r\right) \circ \phi _{A}^{1}\left( l\otimes 1\right) \right] \left( a\right)
\\
&=&\phi _{A}^{1}\left( 1\otimes r\right) \left( al\right) =r\left( al\right)
\\
&=&\phi _{A}^{1}\left( l\otimes r\right) \left( a\right) =\phi _{A}\left(
l\otimes r\right) \left( a\right)
\end{eqnarray*}
and hence
\begin{equation*}
\phi _{A}\left( l\otimes r\right) =\phi _{A}\left( 1\otimes r\right) \circ
\phi _{A}\left( l\otimes 1\right) =\phi _{A}\left( l\otimes 1\right) \circop\phi _{A}\left( 1\otimes r\right),
\end{equation*}
where the notation $\circop$ stands for  the multiplication of $\mathrm{End}_{\Bbbk }\left(A\right)^{ \rm op}$. Thus
\begin{eqnarray*}
\phi _{A}\left[ \left( l_{1}\otimes r_{1}\right) \otimes \cdots \otimes
\left( l_{i}\otimes r_{i}\right) \right] & = &\phi _{A}\left[ \left(
l_{1}\otimes r_{1}\right) \cdot _{T}\cdots \cdot _{T}\left( l_{i}\otimes
r_{i}\right) \right] \\
&=&\phi _{A}\left( l_{1}\otimes r_{1}\right) \circop\cdots
\circop\phi _{A}\left( l_{i}\otimes r_{i}\right) \\
&=&\phi _{A}\left( l_{1}\otimes 1\right) \circop\phi _{A}\left(
1\otimes r_{1}\right) \circop\cdots \circop\phi
_{A}\left( l_{i}\otimes 1\right) \circop\phi _{A}\left(
1\otimes r_{i}\right) \\
&=&\phi _{A}\left[ \left( l_{1}\otimes 1\right) \cdot _{T}\left( 1\otimes
r_{1}\right) \cdot _{T}\cdots \cdot _{T}\left( l_{i}\otimes 1\right) \cdot
_{T}\left( 1\otimes r_{i}\right) \right] \\
&=&\phi _{A}\left[ \left( l_{1}\otimes 1\right) \otimes \left( 1\otimes
r_{1}\right) \cdots \otimes \left( l_{i}\otimes 1\right) \otimes \left(
1\otimes r_{i}\right) \right]
\end{eqnarray*}
where the notation $._{T}$ stands for  the multiplication of  $T$. Therefore
\begin{eqnarray*}
a\blacktriangleleft \left[ \left( l_{1}\otimes r_{1}\right) \otimes \cdots
\otimes \left( l_{i}\otimes r_{i}\right) \right]
&=&\phi _{A}\left[ \left( l_{1}\otimes r_{1}\right) \otimes \cdots \otimes
\left( l_{i}\otimes r_{i}\right) \right] \left( a\right) \\
&=&\phi _{A}\left[ \left( l_{1}\otimes 1\right) \otimes \left( 1\otimes
r_{1}\right) \cdots \otimes \left( l_{i}\otimes 1\right) \otimes \left(
1\otimes r_{i}\right) \right] \left( a\right) \\
&=&a\blacktriangleleft \left[ \left( l_{1}\otimes 1\right) \cdot _{T}\left(
1\otimes r_{1}\right) \cdot _{T}\cdots \cdot _{T}\left( l_{i}\otimes
1\right) \cdot _{T}\left( 1\otimes r_{i}\right) \right]
\end{eqnarray*}%
Set $L\coloneqq A\otimes 1$ and $R\coloneqq1\otimes A^{\mathrm{op}}$. For shortness we
write $l\in L$ for $l\otimes 1$ and $r\in R$ for $1\otimes r.$ We also omit
the product over $T.$ Using this notation, we obtain%
\begin{equation*}
a\blacktriangleleft \left[ \left( l_{1}\otimes r_{1}\right) \otimes \cdots
\otimes \left( l_{i}\otimes r_{i}\right) \right] =a\blacktriangleleft \left(
l_{1}r_{1}\cdots l_{i}r_{i}\right) .
\end{equation*}%
For every $n\geq 1,f\in A^{\ast }$, we set
\begin{equation*}
W_{n}\left( f\right) \coloneqq \mathrm{Span}_{\Bbbk }\Big\{ \left(
a_{1}a_{2}\cdots a_{n-1}a_{n}\right) \blacktriangleright f\mid
a_{1},\ldots ,a_{n}\in L\cup R\Big\} .
\end{equation*}%
Set also $W_{0}\left( f\right) \coloneqq \Bbbk f$. Since both $A$ and $A^{\mathrm{op}%
}$ contain $1,$ it is clear that $W_{i}\left( f\right) \subseteq W_{j}\left(
f\right) $ for $i\leq j$.

Note further that $W_{i}\left( f\right) \subseteq \left( \left( A^{\mathrm{e}%
}\right) ^{\otimes i}\blacktriangleright f\right) \subseteq W_{2i}\left( f\right)$ for
every $i\in \mathbb{N}$ so that $\dim _{\Bbbk }\left( \left( A^{\mathrm{e}%
}\right) ^{\otimes n}\blacktriangleright f\right) <\infty $ if and only if $%
\dim _{\Bbbk }\left( W_{n}\left( f\right) \right) <\infty $ for every $n\in
\mathbb{N}$. Set
\begin{equation*}
B\coloneqq \Big\{ f\in A^{\ast }\mid \dim _{\Bbbk }\left( W_{n}\left( f\right)
\right) <\infty \text{ for every }\,n\in\, \mathbb{N}\Big\} .
\end{equation*}%
It remains to prove that $A^{\bullet }=B.$

$\subseteq )$ It suffices to prove that $V\subseteq B$ for every $V\in
\mathcal{G}$. Let us prove that $W_{n}\left( f\right) $ is finite
dimensional for every $f\in V$ by induction on $n\in \mathbb{N}$. For $n=0$
there is nothing to prove.

Let $n>0$ be such that $W_{n-1}\left( v\right) $ is finite-dimensional for
every $v\in V$. Let $f\in V.$ Write $\Delta _{V}\left( f\right)
=\sum_{i=1}^{t}g_{i}\otimes h_{i}\in V\otimes V.$ Let $a_{1},\ldots
,a_{n}\in L\cup R\ $and $w\coloneqq a_{1}a_{2}\cdots a_{n-1}.$ Then
\begin{equation*}
\left( \left( wa_{n}\right) \blacktriangleright f\right) \left( x\right)
=f\left( x\blacktriangleleft \left( wa_{n}\right) \right) =f\left( \left(
x\blacktriangleleft w\right) \blacktriangleleft a_{n}\right)
\end{equation*}
If $a_{n}=l\in L,$ then
\begin{equation*}
\left( \left( wa_{n}\right) \blacktriangleright f\right) \left( x\right)
=f\left( \left( x\blacktriangleleft w\right) \blacktriangleleft l\right)
\overset{(\ref{form:Black2})}{=}f\left( \left( x\blacktriangleleft w\right)
l\right) =\sum_{i=1}^{t}g_{i}\left( x\blacktriangleleft w\right) h_{i}\left(
l\right) =\sum_{i=1}^{t}\left( w\blacktriangleright g_{i}\right) \left(
x\right) h_{i}\left( l\right)
\end{equation*}
so that $\left( wa_{n}\right) \blacktriangleright
f=\sum_{i=1}^{n}h_{i}\left( l\right) \cdot \left( w\blacktriangleright
g_{i}\right) \in \sum_{i=1}^{n}W_{n-1}\left( g_{i}\right) .$ If $a_{n}=r\in
R,$ then%
\begin{equation*}
\left( \left( wa_{n}\right) \blacktriangleright f\right) \left( x\right)
=f\left( \left( x\blacktriangleleft w\right) \blacktriangleleft r\right)
\overset{(\ref{form:Black2})}{=}f\left( r\left( x\blacktriangleleft w\right)
\right) =\sum_{i=1}^{t}g_{i}\left( r\right) h_{i}\left( x\blacktriangleleft
w\right) =\sum_{i=1}^{t}g_{i}\left( r\right) \left( w\blacktriangleright
h_{i}\right) \left( x\right)
\end{equation*}
so that
$$
\left( \left( wa_{n}\right) \blacktriangleright f\right)
=\sum_{i=1}^{t}g_{i}\left( r\right) \cdot \left( w\blacktriangleright
h_{i}\right) \in \sum_{i=1}^{t}W_{n-1}\left( h_{i}\right)
$$
Thus
$$
\left(a_{1}a_{2}a_{3}a_{4}\cdots a_{n-1}a_{n}\right) \blacktriangleright f\in \sum_{i=1}^{t}W_{n-1}\left( g_{i}\right) +\sum_{i=1}^{t}W_{n-1}\left(h_{i}\right)
$$ for every $a_{1},\ldots ,a_{n}\in L\cup R$, which means that
\begin{equation*}
W_{n}\left( f\right) \subseteq \sum_{i=1}^{t}W_{n-1}\left( g_{i}\right)
+\sum_{i=1}^{t}W_{n-1}\left( h_{i}\right) .
\end{equation*}
Since, by inductive hypothesis, the latter is finite-dimensional so is $%
W_{n}\left( f\right) $.

$\supseteq )$ Let $f\in B$ and let us prove that $V\coloneqq \left(
T\blacktriangleright f\right) $ is good (this implies $f=\left(
1\blacktriangleright f\right) \in V\subseteq A^{\bullet }$). Consider an element  $v\in V$.
Then there is $z\in T$ such that $v=z\blacktriangleright f$. Write $%
z\coloneqq \sum_{i=0}^{n}z_{i}$ with $z_{i}\in \left( A^{\mathrm{e}}\right)
^{\otimes i}$ so that
$$
v=z\blacktriangleright
f=\sum_{i=0}^{n}z_{i}\blacktriangleright f\in \sum_{i=0}^{n}\left( \left( A^{\mathrm{e}}\right) ^{\otimes i}\blacktriangleright f\right) \subseteq \left(\left( A^{\mathrm{e}}\right) ^{\otimes n}\blacktriangleright f\right).
$$
Henceforth  it is not restrictive to assume $z\in \left( A^{\mathrm{e}}\right)
^{\otimes n}.$ We have then that
\begin{equation*}
\left( A\rightharpoonup v\right) \overset{(\ref{form:Balck4})}{\subseteq }
\left( A^{\mathrm{e}}\right) \blacktriangleright v\subseteq \left( A^{\mathrm{e}}\right) \blacktriangleright \left( z\blacktriangleright f\right)
\subseteq \left( A^{\mathrm{e}}\right) \blacktriangleright \left( \left( A^{\mathrm{e}}\right) ^{\otimes n}\blacktriangleright f\right) \subseteq \left(
A^{\mathrm{e}}\right) ^{\otimes \left( n+1\right) }\blacktriangleright f
\end{equation*}
and the latter is finite-dimensional. Hence $\left( A\rightharpoonup
v\right) $ is finite-dimensional and,  by Lemma \ref{lem:A->f},  we have that $m^{\ast}\left( v\right) \in {Im}\left( \varphi _{A,A}\right) .$ Write $m^{\ast
}\left( v\right) =\sum_{i=1}^{n}g_{i}\otimes h_{i}\in A^{\ast }\otimes
A^{\ast }.$ By the proof of the same lemma, we can choose $g_{1},\ldots ,g_{n}$
to form a basis of $\left( A\rightharpoonup v\right) .$ Thus there exist $a_{1},\ldots ,a_{n}\in A$ such that $g_{i}\left( a_{j}\right) =\delta
_{i,j}$.
We compute
\begin{equation*}
\left( \left( 1\otimes a_{j}\right) \blacktriangleright v\right) \left(
x\right) \overset{(\ref{form:Balck4})}{=}\left( v\leftharpoonup a_{j}\right)
\left( x\right) =v\left( a_{j}x\right) =\sum_{i=1}^{n}g_{i}\left(
a_{j}\right) h_{i}\left( x\right) =h_{j}\left( x\right)
\end{equation*}%
so that $h_{j}=\left( 1\otimes a_{j}\right) \blacktriangleright v\in \left(
A^{\mathrm{e}}\blacktriangleright v\right) \subseteq V.$ We have so proved
that $m^{\ast }\left( v\right) =\sum_{i=1}^{n}g_{i}\otimes h_{i}\in A^{\ast
}\otimes V.$ A similar argument shows that $m^{\ast }\left( v\right) \in
V\otimes A^{\ast }$ and hence $m^{\ast }\left( v\right) \in \left( A^{\ast
}\otimes V\right) \cap \left( V\otimes A^{\ast }\right) =V\otimes V.$
\end{proof}

\begin{remark}\label{rem:Iass}
Let $f\in A^{\ast }$ be such that $f(I)=0$ for some finite
codimensional ideal in $A$ (an ideal in a non-associative
algebra  is just a $\Bbbk $-vector subspace such that $aI\subseteq I$ and $Ia\subseteq I$ for all $a\in A$). Let $l\otimes r\in A^{\mathrm{e}}$ and let
$x\in I$. We have that
\begin{equation*}
\left( \left(l\otimes r\right)\blacktriangleright f\right) (x)=f(r(xl))\subseteq f(I)=0.
\end{equation*}
Inductively, if $z\in \left( A^{\mathrm{e}}\right) ^{\otimes n}$, $z=\left(l_{1}\otimes r_{1}\right) \otimes \cdots \otimes \left( l_{n-1}\otimes r_{n-1}\right) \otimes \left( l_{n}\otimes r_{n}\right) =w\otimes \left(l_{n}\otimes r_{n}\right) $, then
\begin{equation*}
\left( z\blacktriangleright f\right) (x)=\left( \left(l_{n}\otimes
r_{n}\right)\blacktriangleright f\right) (x\blacktriangleleft w)=f(r_{n}\left(
\left( x\blacktriangleleft w\right) l_{n}\right) )\subseteq f(r_{n}\left(
Il_{n}\right) )\subseteq f(I)=0.
\end{equation*}
Therefore $\left( A^{\mathrm{e}}\right) ^{\otimes n}\blacktriangleright f$
is contained in $I^{\perp }$, that injects into $\left( \frac{A}{I}\right)
^{\ast }$, which has finite dimension for all $n\in \mathbb{N}$. Hence, if $f $ vanishes on a finite codimensional ideal of $A$, then $f\in A^{\bullet }$.  This is an alternative way to show that $A^\circ$ is contained in $A^\bullet$, see Remark \ref{rem:oclassic}.
\end{remark}

\begin{remark} Another description of $A^{\bullet}$ by using the so-called \emph{standard filtration} $\left( T_{\left( n\right)}\right) _{n\in \mathbb{N}}$ of $T\coloneqq T\left( A^{\mathrm{e}}\right)$, is also possible. Precisely, this filtration is defined  by setting $T_{\left( n\right) }\coloneqq \bigoplus
_{i=0}^{n}\left( A^{\mathrm{e}}\right) ^{\otimes i}$, where $\left( A^{\mathrm{e}}\right) ^{\otimes 0}\coloneqq \Bbbk $. Then
\begin{equation*}
A^{\bullet }=\Big\{ f\in A^{\ast }\mid \dim _{\Bbbk }\left( T_{\left(
n\right) }\blacktriangleright f\right) <\infty \text{ for every }n\in
\mathbb{N}\Big\} .
\end{equation*}
In fact
\begin{equation*}
\left( T_{\left( n\right) }\blacktriangleright f\right) \subseteq \left(
\left( \bigoplus _{i=0}^{n}\left( A^{\mathrm{e}}\right) ^{\otimes i}\right)
\blacktriangleright f\right) \subseteq \sum_{i=0}^{n}\left( \left( A^{\mathrm{e}}\right) ^{\otimes i}\blacktriangleright f\right) \subseteq \left(
\left( A^{\mathrm{e}}\right) ^{\otimes n}\blacktriangleright f\right)
\end{equation*}
so that $\left( T_{\left( n\right) }\blacktriangleright f\right) =\left(
\left( A^{\mathrm{e}}\right) ^{\otimes n}\blacktriangleright f\right) $.
\end{remark}

We now give a characterization of $A^{\bullet }$ in the spirit of \cite[Definition 1.2.3]{Montgomery}.

\begin{proposition}\label{prop:In}
Let $\left( A,m,u\right) $ be in $\nalgk$ and let $f\in A^{\ast }$. Then the following are equivalent
\begin{enumerate}[(i)]
\item $f\in A^{\bullet }$;
\item There is a family
$\left( I_{n}\right) _{n\in \mathbb{N}}$ of subspaces of $A$ of finite
codimension such that, for each $n\geq 1$,
$$\left( I_{n}\blacktriangleleft
A^{\mathrm{e}}\right) \subseteq I_{n-1},\; \text{ and }\;f\left( I_{0}\right) =0.$$
\end{enumerate}
Moreover, if one the these conditions holds true, then we can choose
$$
I_{0}=\mathrm{Ker}(f)\;\text{ and }\;I_{n}=\Big\{a\in A\mid a\blacktriangleleft A^{\mathrm{e}}\subseteq
I_{n-1}\Big\},\;\; \text{for every }\; n>0.
$$
\end{proposition}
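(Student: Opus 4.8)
The plan is to reduce the statement to the description of $A^{\bullet}$ obtained in Proposition~\ref{pro:dim}, which characterizes $A^{\bullet}$ as the set of those $f\in A^{\ast}$ for which $\dim_{\Bbbk}\big((A^{\mathrm{e}})^{\otimes n}\blacktriangleright f\big)<\infty$ for every $n\in\mathbb{N}$, combined with the elementary remarks that a finite-dimensional subspace $S\subseteq A^{\ast}$ has finite-codimensional annihilator $S^{\perp}\subseteq A$ (since $A/S^{\perp}$ embeds into $S^{\ast}$) and, conversely, a finite-codimensional subspace $U\subseteq A$ has finite-dimensional annihilator $U^{\perp}\cong(A/U)^{\ast}$ inside $A^{\ast}$. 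The objects bridging conditions $(i)$ and $(ii)$ will be the subspaces
\[
J_{n}\coloneqq\big((A^{\mathrm{e}})^{\otimes n}\blacktriangleright f\big)^{\perp}=\{x\in A\mid x\blacktriangleleft(A^{\mathrm{e}})^{\otimes n}\subseteq\ker f\},\qquad n\in\mathbb{N},
\]
where the second equality is immediate from $(z\blacktriangleright f)(x)=f(x\blacktriangleleft z)$ and from the fact that $(A^{\mathrm{e}})^{\otimes n}\blacktriangleright f$ is spanned by the elements $z\blacktriangleright f$ with $z\in(A^{\mathrm{e}})^{\otimes n}$; note $J_{0}=\ker f$, since $(A^{\mathrm{e}})^{\otimes 0}=\Bbbk$ and $x\blacktriangleleft1_{T}=x$. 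Throughout I will use the right $T(A^{\mathrm{e}})$-module identity $x\blacktriangleleft(z\cdot_{T}z')=(x\blacktriangleleft z)\blacktriangleleft z'$, which follows from $\phi_{A}$ being an algebra map into $\mathrm{End}_{\Bbbk}(A)^{\mathrm{op}}$, together with the identity $A^{\mathrm{e}}\cdot_{T}(A^{\mathrm{e}})^{\otimes(n-1)}=(A^{\mathrm{e}})^{\otimes n}$ of subspaces of $T(A^{\mathrm{e}})$.

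For $(i)\Rightarrow(ii)$ I would take $I_{n}\coloneqq J_{n}$. If $f\in A^{\bullet}$, then each $(A^{\mathrm{e}})^{\otimes n}\blacktriangleright f$ is finite-dimensional by Proposition~\ref{pro:dim}, whence each $I_{n}=J_{n}$ is finite-codimensional by the remark above. For $n\geq1$, $x\in J_{n}$, $w\in A^{\mathrm{e}}$ and $z'\in(A^{\mathrm{e}})^{\otimes(n-1)}$ one computes $(x\blacktriangleleft w)\blacktriangleleft z'=x\blacktriangleleft(w\cdot_{T}z')\in x\blacktriangleleft(A^{\mathrm{e}})^{\otimes n}\subseteq\ker f$, so $x\blacktriangleleft w\in J_{n-1}$, i.e. $I_{n}\blacktriangleleft A^{\mathrm{e}}\subseteq I_{n-1}$; and $f(I_{0})=f(\ker f)=0$. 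The same computation, using $A^{\mathrm{e}}\cdot_{T}(A^{\mathrm{e}})^{\otimes(n-1)}=(A^{\mathrm{e}})^{\otimes n}$ for the reverse inclusion, shows that $J_{n}=\{a\in A\mid a\blacktriangleleft A^{\mathrm{e}}\subseteq J_{n-1}\}$; together with $J_{0}=\ker f$ this yields the explicit recursive choice claimed in the ``moreover'' part.

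For $(ii)\Rightarrow(i)$, assume a family $(I_{n})_{n\in\mathbb{N}}$ of finite-codimensional subspaces with $I_{n}\blacktriangleleft A^{\mathrm{e}}\subseteq I_{n-1}$ for $n\geq1$ and $f(I_{0})=0$ is given; I claim $I_{n}\subseteq J_{n}$ for all $n$, by induction. Indeed $I_{0}\subseteq\ker f=J_{0}$, and for $n\geq1$, writing a generator of $(A^{\mathrm{e}})^{\otimes n}$ as $w\cdot_{T}z'$ with $w\in A^{\mathrm{e}}$ and $z'\in(A^{\mathrm{e}})^{\otimes(n-1)}$, one gets for $x\in I_{n}$ that $x\blacktriangleleft(w\cdot_{T}z')=(x\blacktriangleleft w)\blacktriangleleft z'\in I_{n-1}\blacktriangleleft(A^{\mathrm{e}})^{\otimes(n-1)}\subseteq\ker f$ by the inductive hypothesis, so $x\in J_{n}$. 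Hence each $J_{n}$ is finite-codimensional (it contains $I_{n}$), so $J_{n}^{\perp}\cong(A/J_{n})^{\ast}$ is finite-dimensional; since $(A^{\mathrm{e}})^{\otimes n}\blacktriangleright f\subseteq J_{n}^{\perp}$ — because $(z\blacktriangleright f)(x)=f(x\blacktriangleleft z)=0$ for $z\in(A^{\mathrm{e}})^{\otimes n}$, $x\in J_{n}$ — we conclude $\dim_{\Bbbk}\big((A^{\mathrm{e}})^{\otimes n}\blacktriangleright f\big)<\infty$ for all $n$, that is $f\in A^{\bullet}$ by Proposition~\ref{pro:dim}. The only places requiring care are the compatibility of the right $T(A^{\mathrm{e}})$-action with the product of $T(A^{\mathrm{e}})$ (keeping track of the built-in ``op'') and the two directions of the annihilator correspondence between finite dimension in $A^{\ast}$ and finite codimension in $A$; the rest is a routine induction, so I do not anticipate a genuine obstacle.
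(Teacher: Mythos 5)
Your proposal is correct and follows essentially the same route as the paper: both directions hinge on Proposition \ref{pro:dim} and on the annihilators $\left(\left(A^{\mathrm{e}}\right)^{\otimes n}\blacktriangleright f\right)^{\perp}$, which are exactly the subspaces $I_n$ used in the paper's proof, with your inductive inclusion $I_n\subseteq J_n$ being just a repackaging of the paper's inductive claim $I_n\blacktriangleleft\left(A^{\mathrm{e}}\right)^{\otimes n}\subseteq I_0$. The identification of the recursive description in the ``moreover'' part also matches the paper's computation.
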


\begin{proof}
$\left( \Rightarrow \right)$. Assume $f \in A^{\bullet}$ and set $I_{n}\coloneqq \left( \left( A^{\mathrm{e}
}\right) ^{\otimes n}\blacktriangleright f\right) ^{\perp }.$ For every $
n\geq 1,$ $u\in I_{n},z\in A^{\mathrm{e}},w\in \left( A^{\mathrm{e}}\right)
^{\otimes \left( n-1\right) },$
\begin{equation*}
\left( w\blacktriangleright f\right) \left( u\blacktriangleleft z\right)
=\left( z\blacktriangleright \left( w\blacktriangleright f\right) \right)
\left( u\right) =\left( \left( zw\right) \blacktriangleright f\right) \left(
u\right) \in \left( \left( A^{\mathrm{e}}\right) ^{\otimes
n}\blacktriangleright f\right) \left( u\right) =0
\end{equation*}
so that $u\blacktriangleleft z\in \left( \left( A^{\mathrm{e}}\right)
^{\otimes \left( n-1\right) }\blacktriangleright f\right) ^{\perp }=I_{n-1}$
and hence $\left( I_{n}\blacktriangleleft A^{\mathrm{e}}\right) \subseteq
I_{n-1}.$ Since $\left( \left( A^{\mathrm{e}}\right) ^{\otimes
0}\blacktriangleright f\right) =\left( \Bbbk \blacktriangleright f\right)
=\Bbbk f$ we get that $f\left( I_{0}\right) =0.$

$\left( \Leftarrow \right) .$ Inductively one proves that $\left(
I_{n}\blacktriangleleft \left( A^{\mathrm{e}}\right) ^{\otimes n}\right)
\subseteq I_{0}$ so that we have
\begin{equation*}
\left( \left( A^{\mathrm{e}}\right) ^{\otimes n}\blacktriangleright f\right)
\left( I_{n}\right) \subseteq f\left( I_{n}\blacktriangleleft \left( A^{
\mathrm{e}}\right) ^{\otimes n}\right) \subseteq f\left( I_{0}\right) =0.
\end{equation*}
Therefore $\left( \left( A^{\mathrm{e}}\right) ^{\otimes
n}\blacktriangleright f\right) \subseteq I_{n}^{^{\perp }}$ which is
finite-dimensional as $I_{n}$ has finite codimension, which by Proposition \ref{pro:dim} implies that $f \in A^{\bullet}$.

Let us check the last statement. For $n=0$ we have that
\begin{equation*}
I_{0}=\left( \left( A^{\mathrm{e}}\right) ^{\otimes 0}\blacktriangleright
f\right) ^{\perp }=\left( \Bbbk \blacktriangleright f\right) ^{\perp
}=\left( \Bbbk f\right) ^{\perp }=\mathrm{Ker}(f),
\end{equation*}%
and for $n>0$ we have:%
\begin{eqnarray*}
I_{n} &\coloneqq &\left( \left( A^{\mathrm{e}}\right) ^{\otimes
n}\blacktriangleright f\right) ^{\perp }=\Big\{a\in A\mid \left(\left( A^{\mathrm{e}
}\right) ^{\otimes n}\blacktriangleright f\right)(a)=0\Big\} \\
&=&\Big\{a\in A\mid \left(\left( A^{\mathrm{e}}\right) ^{\otimes
(n-1)}\blacktriangleright f\right)(a\blacktriangleleft A^{\mathrm{e}})=0\Big\} \\
&=&\Big\{a\in A\mid (a\blacktriangleleft A^{\mathrm{e}})\subseteq \left( \left(
A^{\mathrm{e}}\right) ^{\otimes (n-1)}\blacktriangleright f\right) ^{\perp
}\Big\} \\
&=&\Big\{a\in A\mid (a\blacktriangleleft A^{\mathrm{e}})\subseteq I_{n-1}\Big\},
\end{eqnarray*}
and this finishes the proof.
\end{proof}

Let $C$ be a coalgebra. Then the coalgebra structure of $C$, through the universal property of the tensor algebra, induces a bialgebra structure on $T\left( C\right) $ so that it makes sense to use the notation $\Delta_{\Sscript{T(C)}}(z)\coloneqq \sum
z_{1}\otimes z_{2}$ for any $z\in T\left( C\right)$, for the comultiplication of  $T\left( C\right) $; see e.g. \cite[Theorem 5.3.1]{Radford-Hopf}.

\begin{lemma}
\label{lem:modalg}Let $C$ and $D$ be two coalgebras with a $\Bbbk $-linear map $\phi _{C,\,D}^{1}:C\rightarrow \mathrm{End}_{\Bbbk }\left(D\right) $ as in subsection \ref{not:blacktriangle}. Assume that $D\otimes C\rightarrow D:d\otimes c\mapsto d\blacktriangleleft c$ is a coalgebra map. Then $D$ is a right $T\left( C\right) $-module coalgebra through $\blacktriangleleft $ and $\left( D^{\ast },m_{D^{\ast }},u_{D^{\ast
}}\right) $ is a left $T\left( C\right) $-module algebra through $
\blacktriangleright $ where
\begin{equation*}
m_{D^{\ast }}\left( f\otimes g\right) =f\ast g\text{ (convolution product),}
\; \text{and}\; u_{D^{\ast }}\left( k\right) =k\varepsilon _{D}
\end{equation*}
for every $f,g\in D^{\ast },k\in \Bbbk $.
\end{lemma}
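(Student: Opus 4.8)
The plan is to deduce both assertions from the universal property of the tensor algebra together with the single hypothesis that $\blacktriangleleft$ is a coalgebra map. First I would record what that hypothesis says on elements, namely
\[
\Delta_D(d\blacktriangleleft c)=\textstyle\sum (d_1\blacktriangleleft c_1)\otimes(d_2\blacktriangleleft c_2)\qquad\text{and}\qquad \varepsilon_D(d\blacktriangleleft c)=\varepsilon_D(d)\varepsilon_C(c)\qquad(d\in D,\ c\in C),
\]
and I would recall that, by the universal property of $T(C)$, the induced bialgebra structure has $\Delta_{T(C)}$ and $\varepsilon_{T(C)}$ algebra maps whose restrictions to $C$ are $\Delta_C$ (followed by $C\otimes C\hookrightarrow T(C)\otimes T(C)$) and $\varepsilon_C$ respectively; and that $\varepsilon_{T(C)}$ vanishes on $\bigoplus_{i\ge 1}C^{\otimes i}$ off $C$ only through $\varepsilon_C$.

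\emph{Step 1 (module coalgebra structure on $D$).} Since $\phi_{C,D}\colon T(C)\to\mathrm{End}_\Bbbk(D)^{\mathrm{op}}$ is an algebra map, $D$ is already a right $T(C)$-module via $w\blacktriangleleft z\coloneqq\phi_{C,D}(z)(w)$, and in particular $w\blacktriangleleft(zz')=(w\blacktriangleleft z)\blacktriangleleft z'$. To upgrade this to a module coalgebra I would consider
\[
X\coloneqq\Big\{z\in T(C)\ \Big|\ \textstyle\Delta_D(w\blacktriangleleft z)=\sum(w_1\blacktriangleleft z_1)\otimes(w_2\blacktriangleleft z_2)\ \text{and}\ \varepsilon_D(w\blacktriangleleft z)=\varepsilon_D(w)\varepsilon_{T(C)}(z)\ \text{for all }w\in D\Big\}
\]
and prove $X=T(C)$. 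It contains $\Bbbk$ trivially (as $w\blacktriangleleft 1=w$ and $\Delta_{T(C)}(1)=1\otimes1$), it contains $C$ by the hypothesis recalled above, and it is a subalgebra: for $z,z'\in X$ one computes $\Delta_D\big(w\blacktriangleleft(zz')\big)=\Delta_D\big((w\blacktriangleleft z)\blacktriangleleft z'\big)=\sum(w_1\blacktriangleleft z_1z_1')\otimes(w_2\blacktriangleleft z_2z_2')$ by applying $z'\in X$ and then $z\in X$, and rewrites $\sum z_1z_1'\otimes z_2z_2'=\Delta_{T(C)}(zz')$ because $\Delta_{T(C)}$ is multiplicative (the counit identity is analogous, using that $\varepsilon_{T(C)}$ is multiplicative). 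Since $T(C)$ is generated as an algebra by $\Bbbk$ and $C$, this forces $X=T(C)$, which is precisely the statement that $\Delta_D$ and $\varepsilon_D$ are right $T(C)$-linear, i.e.\ that $D$ is a right $T(C)$-module coalgebra.

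\emph{Step 2 (module algebra structure on $D^*$).} The action $\blacktriangleright$ on $D^*$ is the one already fixed in subsection \ref{not:blacktriangle}, and it is a genuine left action because $(zz')\blacktriangleright f=z\blacktriangleright(z'\blacktriangleright f)$, which is immediate from $w\blacktriangleleft(zz')=(w\blacktriangleleft z)\blacktriangleleft z'$. It then remains to see that $m_{D^*}$ and $u_{D^*}$ are $T(C)$-linear, which follows by dualizing Step 1: for $f,g\in D^*$, $z\in T(C)$, $w\in D$,
\[
\big(z\blacktriangleright(f\ast g)\big)(w)=(f\ast g)(w\blacktriangleleft z)=\textstyle\sum f\big((w\blacktriangleleft z)_1\big)g\big((w\blacktriangleleft z)_2\big)\overset{\text{Step 1}}{=}\sum f(w_1\blacktriangleleft z_1)g(w_2\blacktriangleleft z_2)=\Big(\textstyle\sum(z_1\blacktriangleright f)\ast(z_2\blacktriangleright g)\Big)(w),
\]
and $\big(z\blacktriangleright(k\varepsilon_D)\big)(w)=k\,\varepsilon_D(w\blacktriangleleft z)\overset{\text{Step 1}}{=}k\,\varepsilon_D(w)\varepsilon_{T(C)}(z)$, i.e.\ $z\blacktriangleright u_{D^*}(k)=\varepsilon_{T(C)}(z)\,u_{D^*}(k)$. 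These are exactly the two axioms making $(D^*,m_{D^*},u_{D^*})$ a left $T(C)$-module algebra.

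The only genuine obstacle is Step 1: passing from the hypothesis, which only constrains the action of $C$, to the action of all of $T(C)$. Everything there hinges on the compatibility between the algebra structure of $T(C)$ (used to iterate $\blacktriangleleft$) and its bialgebra comultiplication (used to split it), that is, on $\Delta_{T(C)}$ being an algebra map restricting to $\Delta_C$ on $C$; once this is in hand, Step 2 is a purely formal dualization and involves no further difficulty.
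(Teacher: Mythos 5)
Your proof is correct and takes essentially the same approach as the paper: where you argue that the set of $z\in T(C)$ for which the compatibility identities hold is a subalgebra containing $\Bbbk$ and $C$, the paper runs the equivalent explicit induction on the length of monomials $c_{1}\cdots c_{n}$ in $T(C)$, using exactly the same ingredients (that $\Delta_{T(C)}$ and $\varepsilon_{T(C)}$ are algebra maps restricting to $\Delta_{C}$ and $\varepsilon_{C}$ on $C$, and that $\blacktriangleleft$ is a right action). The passage to the module-algebra structure on $D^{\ast}$ is the same dualization computation in both arguments.
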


\begin{proof}
By hypothesis  for every $c\in C,d\in D,$ we have that
\begin{eqnarray*}
\left( d_{1}\blacktriangleleft c_{1}\right) \otimes \left(
d_{2}\blacktriangleleft c_{2}\right) &=&\left( d\blacktriangleleft c\right)_{1}\otimes \left( d\blacktriangleleft c\right) _{2}, \\
\varepsilon _{D}\left( d\right) \varepsilon _{C}\left( c\right)
&=&\varepsilon _{D}\left( d\blacktriangleleft c\right) .
\end{eqnarray*}
We need to prove that for every $z\in T\left( C\right) ,d\in D,$ we have
\begin{eqnarray*}
\left( d_{1}\blacktriangleleft z_{1}\right) \otimes \left(
d_{2}\blacktriangleleft z_{2}\right) &=&\left( d\blacktriangleleft z\right)_{1}\otimes \left( d\blacktriangleleft z\right) _{2}, \\
\varepsilon _{D}\left( d\right) \varepsilon _{T\left( C\right) }\left(
z\right) &=&\varepsilon _{D}\left( d\blacktriangleleft z\right) .
\end{eqnarray*}

For $k\in \Bbbk $ we have
\begin{equation*}
d\blacktriangleleft k=\phi _{C,D}\left( k\right) \left( d\right) =\left( k
\id{D}\right) \left( d\right) =kd.
\end{equation*}
Then, for every $z\in \Bbbk $ we have
\begin{eqnarray*}
\left( d_{1}\blacktriangleleft z_{1}\right) \otimes \left(
d_{2}\blacktriangleleft z_{2}\right) &=&\left( d_{1}\blacktriangleleft
z1\right) \otimes \left( d_{2}\blacktriangleleft 1\right) =zd_{1}\otimes
d_{2} \\
&=&\left( zd\right) _{1}\otimes \left( zd\right) _{2}=\left(
d\blacktriangleleft z\right) _{1}\otimes \left( d\blacktriangleleft z\right)_{2},
\end{eqnarray*}
and
\begin{equation*}
\varepsilon _{D}\left( d\right) \varepsilon _{T\left( C\right) }\left(
z\right) =\varepsilon _{D}\left( d\right) z=\varepsilon _{D}\left( dz\right)=\varepsilon _{D}\left( d\blacktriangleleft z\right) .
\end{equation*}
Let $c_{1},\ldots ,c_{n}\in C.$ Let us prove, by induction on $n\geq 1$,  that
$$
\left(d_{1}\blacktriangleleft z_{1}\right) \otimes \left( d_{2}\blacktriangleleft
z_{2}\right) =\left( d\blacktriangleleft z\right) _{1}\otimes \left(
d\blacktriangleleft z\right) _{2}\; \text{ and } \;
\varepsilon _{D}\left( d\right)
\varepsilon _{T\left( C\right) }\left( z\right) =\varepsilon _{D}\left(
d\blacktriangleleft z\right), $$
where $z\coloneqq c_{1}\cdots c_{n}$  is the multiplication of the $c_i$'s, each  one viewed as an element in $T(C)$.

For $n=1$ there is nothing to prove. Let $n>1$ and assume the statement true for $n-1$. If we set $z^{\prime }\coloneqq c_{1}\cdots c_{n-1}$, then
we get  form one hand that
\begin{eqnarray*}
\left( d_{1}\blacktriangleleft z_{1}\right) \otimes \left(
d_{2}\blacktriangleleft z_{2}\right) &=&\left( d_{1}\blacktriangleleft
\left( z^{\prime }c_{n}\right) _{1}\right) \otimes \left(
d_{2}\blacktriangleleft \left( z^{\prime }c_{n}\right) _{2}\right) =\left(
d_{1}\blacktriangleleft z_{1}^{\prime }\left( c_{n}\right) _{1}\right)
\otimes \left( d_{2}\blacktriangleleft z_{2}^{\prime }\left( c_{n}\right)
_{2}\right) \\
&=&\left( \left( d_{1}\blacktriangleleft z_{1}^{\prime }\right)
\blacktriangleleft \left( c_{n}\right) _{1}\right) \otimes \left( \left(
d_{2}\blacktriangleleft z_{2}^{\prime }\right) \blacktriangleleft \left(
c_{n}\right) _{2}\right) \\
&=&\left( \left( d\blacktriangleleft z^{\prime }\right)
_{1}\blacktriangleleft \left( c_{n}\right) _{1}\right) \otimes \left( \left(
d\blacktriangleleft z^{\prime }\right) _{2}\blacktriangleleft \left(
c_{n}\right) _{2}\right) \\
&=&\left( \left( d\blacktriangleleft z^{\prime }\right) \blacktriangleleft
c_{n}\right) _{1}\otimes \left( \left( d\blacktriangleleft z^{\prime
}\right) \blacktriangleleft c_{n}\right) _{2} \\
&=&\left( d\blacktriangleleft \left( z^{\prime }c_{n}\right) \right)
_{1}\otimes \left( d\blacktriangleleft \left( z^{\prime }c_{n}\right)
\right) _{2}=\left( d\blacktriangleleft z\right) _{1}\otimes \left(
d\blacktriangleleft z\right) _{2}
\end{eqnarray*}
and from the other hand that
\begin{eqnarray*}
\varepsilon _{D}\left( d\right) \varepsilon _{T\left( C\right) }\left(
z\right) &=&\varepsilon _{D}\left( d\right) \varepsilon _{T\left( C\right)
}\left( z^{\prime }c_{n}\right) =\varepsilon _{D}\left( d\right) \varepsilon_{T\left( C\right) }\left( z^{\prime }\right) \varepsilon _{T\left( C\right)
}\left( c_{n}\right) \\
&=&\varepsilon _{D}\left( d\blacktriangleleft z^{\prime }\right) \varepsilon_{T\left( C\right) }\left( c_{n}\right) =\varepsilon _{D}\left( \left(d\blacktriangleleft z^{\prime }\right) \blacktriangleleft c_{n}\right) \\
&=&\varepsilon _{D}\left( d\blacktriangleleft \left( z^{\prime }c_{n}\right) \right) =\varepsilon _{D}\left( d\blacktriangleleft z\right) .
\end{eqnarray*}
This shows the claimed formulae for every $z\in T\left( C\right) $.
Therefore $D$ is a right $T\left( C\right) $-module coalgebra through $\blacktriangleright $. Since $\left( D,\Delta _{D},\varepsilon _{D}\right) $ is a coassociative coalgebra, we know that $\left( D^{\ast },m_{D^{\ast
}},u_{D^{\ast }}\right) $ is an associative algebra. Let us check that it is
a left $T\left( C\right) $-module algebra through $\blacktriangleleft$.
For all $f,g\in D^{\ast },z\in T\left( C\right) ,d\in D$ we have%
\begin{eqnarray*}
\sum \Big[ \left( z_{1}\blacktriangleright f\right) \ast \left(
z_{2}\blacktriangleright g\right) \Big] \left( d\right) &=&\sum \left(
z_{1}\blacktriangleright f\right) \left( d_{1}\right) \left(
z_{2}\blacktriangleright g\right) \left( d_{2}\right) \\
&=&\sum f\left( d_{1}\blacktriangleleft z_{1}\right) g\left(
d_{2}\blacktriangleleft z_{2}\right) =\sum f\Big( \left(
d\blacktriangleleft z\right) _{1}\Big) g\Big( \left( d\blacktriangleleft
z\right) _{2}\Big) \\
&=&\left( f\ast g\right) \left( d\blacktriangleleft z\right) =\left(
z\blacktriangleright \left( f\ast g\right) \right) \left( d\right)
\end{eqnarray*}
so that $\sum \left( z_{1}\blacktriangleright f\right) \ast \left(
z_{2}\blacktriangleright g\right) =\left( z\blacktriangleright \left( f\ast
g\right) \right) .$ Moreover
\begin{equation*}
\left( z\blacktriangleright \varepsilon _{D}\right) \left( d\right)
=\varepsilon _{D}\left( d\blacktriangleleft z\right) =\varepsilon _{D}\left(d\right) \varepsilon _{T\left( C\right) }\left( z\right)
\end{equation*}
so that $z\blacktriangleright \varepsilon _{D}=\varepsilon _{T\left(
C\right) }\left( z\right) \varepsilon _{D}.$ This proves that $\left(
D^{\ast },m_{D^{\ast }},u_{D^{\ast }}\right) $ is a left $T\left( C\right) $-module algebra through $\blacktriangleright $.
\end{proof}

\begin{remark}\label{remark:mfrak}
More generally, given a bialgebra $B,$ the obvious contravariant functor $\left( -\right) ^{\ast }:\mathfrak{M}{_{B}\rightarrow {_{B}}}\mathfrak{M}$, from the category of right to the category of left $B$-modules, is lax monoidal so that it induces the covariant functor $\left( -\right)^{\ast }:\mathfrak{M}{_{B}}\rightarrow \left( {{_{B}}}\mathfrak{M}\right) ^{\mathrm{op}}$ which is colax monoidal. Thus the latter functor induces a functor \textrm{Coalg}$\left( \left( -\right) ^{\ast }\right) :\mathrm{Coalg}
\left( \mathfrak{M}{_{B}}\right) \rightarrow \mathrm{Coalg}\left( \left( {{_{B}}}\mathfrak{M}\right) ^{\mathrm{op}}\right) \equiv \left( \mathrm{Alg} \left( {{_{B}}}\mathfrak{M}\right) \right) ^{\mathrm{op}}$ which means that $\left( -\right) ^{\ast }$ maps right $B$-module coalgebras to left $B$-module algebras as in the particular case of Lemma \ref{lem:modalg}.
\end{remark}

\section{Complementary results}

The following result is probably well-known but we were not able to find a reference.

\begin{lemma}\label{lemma:unitgraded}
Let $A=\bigoplus_{n\in \mathbb{N}}A_n$ be an $\mathbb{N}$-graded ring. Suppose that the product of two non-zero homogeneous elements is non-zero. Then the invertible element of $A$ are concentrated in $A_0$. Moreover, $A$ is a domain.
\end{lemma}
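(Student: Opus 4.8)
The plan is to work with the two natural numbers attached to a nonzero element. Writing $x=\sum_{n\in\mathbb{N}}x_n$ with $x_n\in A_n$ (a finite sum, by definition of the direct sum), set $\omega(x)\coloneqq\min\{n:x_n\neq 0\}$ and $\delta(x)\coloneqq\max\{n:x_n\neq 0\}$, so that $\omega(x)\le\delta(x)$ with equality exactly when $x$ is homogeneous. A preliminary observation I would record is that the unit lies in degree zero: writing $1_A=\sum_n e_n$ and multiplying a homogeneous $a\in A_m$ by $1_A$ on the right, comparing homogeneous components of $a=\sum_n ae_n$ (where $ae_n\in A_{m+n}$) forces $ae_0=a$, and symmetrically $e_0a=a$; hence $e_0$ is a two-sided identity and therefore equals $1_A$, so $1_A=e_0\in A_0$.

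Next I would establish that $A$ has no zero divisors, which is the substance of the domain assertion. Given nonzero $x,y\in A$, the homogeneous component of $xy$ in degree $\omega(x)+\omega(y)$ is precisely $x_{\omega(x)}y_{\omega(y)}$: any $x_iy_j$ contributing to that degree satisfies $i+j=\omega(x)+\omega(y)$ together with $i\ge\omega(x)$ and $j\ge\omega(y)$, which forces $i=\omega(x)$, $j=\omega(y)$. By the standing hypothesis this product of two nonzero homogeneous elements is nonzero, so $xy\neq 0$. Running the same bookkeeping at the top end shows that the degree $\delta(x)+\delta(y)$ component of $xy$ equals $x_{\delta(x)}y_{\delta(y)}\neq 0$. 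In particular $\omega(xy)=\omega(x)+\omega(y)$ and $\delta(xy)=\delta(x)+\delta(y)$.

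Finally, for the statement about units, let $x$ be invertible with inverse $y$, so $xy=1_A$. Since $x$ and $y$ are nonzero, the two identities from the previous step give $\omega(x)+\omega(y)=\omega(1_A)=0$ and $\delta(x)+\delta(y)=\delta(1_A)=0$ (using $1_A\in A_0$). Because $\omega$ and $\delta$ take values in $\mathbb{N}$, each summand must vanish, so in particular $\omega(x)=\delta(x)=0$; thus $x=x_0\in A_0$, as claimed.

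I do not anticipate a real obstacle: the only points demanding a little care are the auxiliary fact $1_A\in A_0$ and the elementary argument isolating the extreme homogeneous components of a product. It is worth noting that associativity is never invoked, so the same proof applies to non-associative $\mathbb{N}$-graded rings verbatim; the hypothesis on products of homogeneous elements is doing all the work.
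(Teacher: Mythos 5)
Your argument is correct and is essentially the paper's proof: both isolate the extreme (top-degree) homogeneous component of a product $xy$, use the hypothesis to see that $x_{\delta(x)}y_{\delta(y)}\neq 0$, and conclude at once that there are no zero divisors and that $xy=1_A$ forces $\delta(x)=0$, i.e.\ $x\in A_0$. The only differences are your explicit verification that $1_A\in A_0$ (which the paper tacitly assumes) and the parallel bookkeeping with the bottom degree $\omega$, which is harmless but not needed.
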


\begin{proof}
Let $x,y\in A$ be non-zero elements. Write $x=x_0+x_1+\cdots+x_s$, where $x_i\in A_i$, and $y=y_0+y_1+\cdots+y_t$, where $y_i\in A_i$, with $x_s\neq 0$ and $y_t\neq 0$. By assumption, $x_sy_t\neq 0$ and it is clearly the homogeneous element with greatest degree of $xy$.

If $xy=1$, then the only possibility is $s+t=0$ whence $s=0$ which means $x\in A_0$.

If $xy=0$, then we must have $x_sy_t=0$, which is a contradiction.
\end{proof}

\begin{corollary}\label{corollario:tdominio}
Given a vector space $V$, the group of units of the tensor algebra $T(V)$ is $\Bbbk\setminus\{0\}$ and $T(V)$ is a domain.
\end{corollary}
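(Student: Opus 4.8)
The plan is to derive this as an immediate consequence of Lemma \ref{lemma:unitgraded} applied to $A=T(V)$. Recall that $T(V)=\bigoplus_{n\in\mathbb{N}}V^{\otimes n}$, with $V^{\otimes 0}=\Bbbk$, is an $\mathbb{N}$-graded $\Bbbk$-algebra whose multiplication, restricted to $V^{\otimes m}\otimes V^{\otimes n}$, is the canonical isomorphism onto $V^{\otimes(m+n)}$. Thus $T(V)$ is of the form covered by that lemma, and the only hypothesis left to check is that the product of two nonzero homogeneous elements is nonzero.

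First I would verify this hypothesis. Let $x\in V^{\otimes m}$ and $y\in V^{\otimes n}$ be nonzero. Their product $xy$ in $T(V)$ is the image of $x\otimes y$ under the canonical isomorphism $V^{\otimes m}\otimes V^{\otimes n}\cong V^{\otimes(m+n)}$, so it suffices to show $x\otimes y\neq 0$. This is precisely where the base field enters: since $\Bbbk$ is a field, the one-dimensional subspaces $\Bbbk x\subseteq V^{\otimes m}$ and $\Bbbk y\subseteq V^{\otimes n}$ are direct summands, so $\Bbbk x\otimes\Bbbk y\cong\Bbbk$ embeds into $V^{\otimes m}\otimes V^{\otimes n}$ and $x\otimes y$ corresponds to a nonzero scalar. (Equivalently, complete $x$ and $y$ to bases and note that $x\otimes y$ is then a basis element of $V^{\otimes m}\otimes V^{\otimes n}$.)

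Granting the hypothesis, Lemma \ref{lemma:unitgraded} gives at once that $T(V)$ is a domain and that every invertible element of $T(V)$ is concentrated in the degree-zero part $V^{\otimes 0}=\Bbbk$. To conclude, I would note that a scalar $k\in\Bbbk$ is a unit of $T(V)$ if and only if $k\neq 0$ (with inverse $k^{-1}\in\Bbbk\subseteq T(V)$), so the group of units of $T(V)$ equals $\Bbbk\setminus\{0\}$.

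The argument is short and I do not foresee a genuine obstacle; the single slightly delicate point is the nonvanishing of $x\otimes y$, which rests on the flatness of $\Bbbk$-modules (i.e.\ on $\Bbbk$ being a field) and is entirely standard.
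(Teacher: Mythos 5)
Your proposal is correct and follows exactly the paper's route: apply Lemma \ref{lemma:unitgraded} to the grading $T_n=V^{\otimes n}$, observing that the product of nonzero homogeneous elements $x\in V^{\otimes s}$, $y\in V^{\otimes t}$ is $x\otimes y\neq 0$. You merely spell out in more detail the nonvanishing of $x\otimes y$ over a field and the identification of the degree-zero units with $\Bbbk\setminus\{0\}$, which the paper leaves implicit.
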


\begin{proof}
We have that $T=T(V)$ is graded with respect to $T_n\coloneqq V^{\otimes n}$. Given $x\in T_s$ and $y\in T_t$ non-zero elements, we have that $x\cdot y=x\otimes y$ which is non-zero.
\end{proof}

\begin{lemma}\label{lemma:dominio}
Let $R$ be a $\Bbbk$-algebra that is also a domain. Then $T(V)\otimes R$ is a domain.
\end{lemma}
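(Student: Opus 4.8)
The plan is to put the obvious $\mathbb{N}$-grading on $T(V)\otimes R$ and invoke Lemma \ref{lemma:unitgraded}. Writing $T\coloneqq T(V)$ with its grading $T=\bigoplus_{n\in\mathbb{N}}T_n$, $T_n=V^{\otimes n}$, the fact that $-\otimes R$ commutes with direct sums gives $T\otimes R=\bigoplus_{n\in\mathbb{N}}(T_n\otimes R)$, and since $(T_s\otimes R)(T_t\otimes R)\subseteq T_{s+t}\otimes R$ this makes $T\otimes R$ an $\mathbb{N}$-graded ring with degree-$n$ component $T_n\otimes R$. By Lemma \ref{lemma:unitgraded} it then suffices to check that the product of two nonzero homogeneous elements of $T\otimes R$ is nonzero; this will at the same time deliver the conclusion that $T\otimes R$ is a domain.

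To verify the latter I would fix a basis $\{e_i\}_{i\in I}$ of $V$ and, for a word $w=i_1\cdots i_n$ on $I$, set $e_w\coloneqq e_{i_1}\otimes\cdots\otimes e_{i_n}\in T_n$ (with $e_\emptyset=1$), so that $\{e_w: |w|=n\}$ is a basis of $T_n$, $e_we_{w'}=e_{ww'}$ (concatenation), and every element of $T_n\otimes R$ is uniquely of the form $\sum_{|w|=n}e_w\otimes r_w$ with $r_w\in R$. For nonzero homogeneous $x=\sum_{|w|=s}e_w\otimes r_w$ and $y=\sum_{|w'|=t}e_{w'}\otimes s_{w'}$ one computes
\[
xy=\sum_{|w|=s,\ |w'|=t}e_{ww'}\otimes r_ws_{w'}.
\]
The key combinatorial observation is that concatenation $(w,w')\mapsto ww'$ is injective from pairs of words of lengths $s$ and $t$ to words of length $s+t$ (recover $w$ and $w'$ as the length-$s$ prefix and length-$t$ suffix), so distinct summands above involve distinct basis vectors $e_u$ of $T_{s+t}$. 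Picking $w_0,w_0'$ with $r_{w_0}\neq 0$ and $s_{w_0'}\neq 0$, the coefficient of $e_{w_0w_0'}$ in $xy$ equals $r_{w_0}s_{w_0'}$, which is nonzero since $R$ is a domain; hence $xy\neq 0$.

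With this in hand Lemma \ref{lemma:unitgraded} applies directly to the graded ring $T(V)\otimes R$ and yields that $T(V)\otimes R$ is a domain. The only slightly delicate point is the bookkeeping with words and the injectivity of concatenation; the rest is routine, and in particular no finite-dimensionality hypothesis on $V$ is needed.
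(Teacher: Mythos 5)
Your proof is correct, but it follows a genuinely different route from the paper's. The paper proves Lemma \ref{lemma:dominio} directly for arbitrary, not necessarily homogeneous, elements of $T(V)\otimes R$: it chooses a totally ordered basis of $V$, builds an admissible graded lexicographic order on the word basis of $T(V)$ (in the style of noncommutative Gr\"obner bases, citing Green), and observes that the product of the two leading terms, $b_{i_s}b_{j_t}\otimes x_sy_t$, is the unique term of greatest order in $xy$ and is nonzero because $R$ is a domain; Lemma \ref{lemma:unitgraded} is not used there at all (the paper invokes it only afterwards, in Corollary \ref{corollario:tensorn}, to locate the units). You instead use the $\mathbb{N}$-grading of $T(V)\otimes R$ with components $T_n\otimes R$, reduce via Lemma \ref{lemma:unitgraded} to products of nonzero homogeneous elements, and settle that case by the injectivity of concatenation on pairs of words of fixed lengths $s$ and $t$, so the cross terms sit on pairwise distinct basis vectors $e_{ww'}$ and the single coefficient $r_{w_0}s_{w_0'}\neq 0$ does the job. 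This is essentially the strategy the paper itself employs for Corollary \ref{corollario:tdominio}, pushed through the extra tensor factor $R$. Your approach needs no monomial order and no external reference, and it yields the description of the units of $T(V)\otimes R$ as a free byproduct; the paper's approach handles arbitrary elements in one stroke without decomposing into homogeneous parts, at the cost of setting up the admissible order. Both arguments ultimately rest on the same fact, namely that the words in a fixed basis of $V$ form a basis of $T(V)$ closed under multiplication (a free monoid), and, as you note, neither requires $V$ to be finite-dimensional.
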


\begin{proof}
Set $T=T(V)$. By the Axiom of Choice we can choose a totally ordered basis $\mathcal{B}_V\coloneqq\{v_i\mid i\in I\}$ for $V$. Mimiking \cite[Example 2 and 3]{Green} we can construct an admissible graded lexicographic order on the basis $\mathcal{B}_T\coloneqq\{v_{i_1}v_{i_2}\cdots v_{i_n}\mid n\geq 1\textrm{ and }i_1,\ldots,i_n\in I\}\cup\{1_{\Bbbk}\}$ of $T(V)$ as follows
\begin{equation*}
v_{i_1}v_{i_2}\cdots v_{i_n} < v_{j_1}v_{j_2}\cdots v_{j_m}
\end{equation*}
if $n<m$ or $n=m$, $v_{i_s}=v_{j_s}$ for $0\leq s\leq (t-1)<n$ and $v_{i_{t}}<v_{j_{t}}$ with respect to the total order on $\mathcal{B}$. Let $x,y\in T\otimes R$ with $x\neq 0$ and $y\neq 0$. We can write $x=b_{i_1}\otimes x_{1}+\cdots +b_{i_s}\otimes x_{s} $ where $x_1,\ldots,x_s\in R$ with $x_s\neq 0$, $b_{i_1},\cdots, b_{i_s}\in \mathcal{B}_T$ and $b_{i_1}<\cdots < b_{i_s}$. Analogously write $y=b_{j_1}\otimes y_{1}+\cdots +b_{j_t}\otimes y_{t} $ where $y_1,\ldots,y_t\in R$ with $y_t\neq 0$, $b_{j_1},\cdots, b_{j_t}\in \mathcal{B}_T$ and $b_{j_1}<\cdots < b_{j_t}$. Since $R$ is a domain, $x_sy_t\neq 0$. Moreover, $b_{i_s}b_{j_t}\in \mathcal{B}_T$ whence $\left(b_{i_s}\otimes x_{s}\right)\left(b_{j_t}\otimes y_{t}\right)=b_{i_s}b_{j_t}\otimes x_{s}y_{t}\neq 0$. Note that $xy=b_{i_1}b_{j_1}\otimes x_{1}y_1+\cdots +b_{i_s}b_{j_t}\otimes x_{s}y_{t}$ where $b_{i_s}b_{j_t}$ is the greatest of all the first entries of the summands involved. Thus $xy\neq 0$.
\end{proof}

\begin{corollary}\label{corollario:tensorn}
Given a vector space $V$ and $n\in \mathbb{N}$, the group of units of the tensor algebra $T(V)^{\otimes n}$ is $\Bbbk\setminus\{0\}$ and $T(V)^{\otimes n}$ is a domain.
\end{corollary}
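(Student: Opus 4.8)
The plan is to argue by induction on $n$, feeding the two complementary lemmas into one another. For $n=0$ the algebra is just $\Bbbk$ and there is nothing to prove, while $n=1$ is exactly Corollary~\ref{corollario:tdominio}. For the inductive step, write $T(V)^{\otimes n}\cong T(V)\otimes T(V)^{\otimes(n-1)}$; by the inductive hypothesis $R\coloneqq T(V)^{\otimes(n-1)}$ is a $\Bbbk$-algebra which is a domain, so Lemma~\ref{lemma:dominio} immediately yields that $T(V)^{\otimes n}$ is a domain.

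It then remains to identify the units. First I would equip $T(V)^{\otimes n}$ with its total-degree $\mathbb{N}$-grading: since $T(V)=\bigoplus_{m\in\mathbb{N}}V^{\otimes m}$ is $\mathbb{N}$-graded, the $n$-fold tensor product inherits the grading whose degree-$d$ component is $\bigoplus_{d_1+\cdots+d_n=d}V^{\otimes d_1}\otimes\cdots\otimes V^{\otimes d_n}$, and whose degree-zero component is $\Bbbk^{\otimes n}\cong\Bbbk$. Having just shown that $T(V)^{\otimes n}$ is a domain, the product of any two non-zero homogeneous elements is \emph{a fortiori} non-zero, so the hypothesis of Lemma~\ref{lemma:unitgraded} is satisfied and that lemma forces every invertible element of $T(V)^{\otimes n}$ to lie in the degree-zero part, namely in $\Bbbk$. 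Conversely every non-zero scalar is obviously invertible, hence the group of units is exactly $\Bbbk\setminus\{0\}$.

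I do not expect any genuine obstacle here: Lemma~\ref{lemma:dominio} and Lemma~\ref{lemma:unitgraded} do all the work, and the single point to verify by hand is the entirely routine fact that the $n$-fold tensor power of a graded algebra is graded with degree-zero part $\Bbbk$. The only mild care needed is to run the two lemmas in the right order, i.e.\ to establish that $T(V)^{\otimes n}$ is a domain \emph{before} invoking Lemma~\ref{lemma:unitgraded}, since the latter needs the non-vanishing of products of homogeneous elements as an input rather than as a conclusion.
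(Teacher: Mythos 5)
Your argument is correct and coincides with the paper's own proof: induction on $n$ with Lemma \ref{lemma:dominio} gives that $T(V)^{\otimes n}$ is a domain, and then the total-degree grading (with degree-zero part $\Bbbk$) together with Lemma \ref{lemma:unitgraded} confines the units to $\Bbbk\setminus\{0\}$. Your explicit remark that the domain property supplies the hypothesis of Lemma \ref{lemma:unitgraded}, and that the two lemmas must therefore be applied in that order, is exactly the (implicit) logic of the paper's proof, just spelled out more carefully.
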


\begin{proof}
By induction on $n$, in view of Lemma \ref{lemma:dominio}, $T(V)^{\otimes n}$ is a domain. Moreover, since $T(V)$ is a graded algebra, $T(V)^{\otimes n}$ is graded too. By Lemma \ref{lemma:unitgraded}, the group of units of $T(V)^{\otimes n}$ is concentrated in degree zero.
\end{proof}

\begin{remark}
Obviously Corollary \ref{corollario:tdominio} follows also by Corollary \ref{corollario:tensorn}.
\end{remark}

\begin{lemma}\label{lemma:fact}
Let $\Bbbk $ be a field and consider $\Bbbk [X]$ the (bi)algebra of
polynomials in one indeterminate $X$. The map $\varphi :\Bbbk \lbrack
X]\longrightarrow \Bbbk $ given by $\varphi \left( X^{n}\right) \coloneqq n!$ and
extended by linearity does not belong to $\Bbbk [X]^{\circ }$, the
ordinary finite dual of $\Bbbk [X]$.
\end{lemma}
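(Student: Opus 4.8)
The plan is to argue directly from the definition of the finite dual. Since $\Bbbk[X]$ is associative, $\Bbbk[X]^{\circ}=\Bbbk[X]^{\bullet}$ by Remark~\ref{rem:oclassic}, and by \eqref{eq:usualfinitedual} it is enough to show that $\ker(\varphi)$ contains no finite-codimensional ideal of $\Bbbk[X]$. Because $\Bbbk[X]$ is a principal ideal domain, every nonzero ideal has the form $\left(p(X)\right)$ with $p(X)=\sum_{j=0}^{d}a_{j}X^{j}$, $a_{d}\neq 0$, and such an ideal has codimension $d<\infty$, whereas the zero ideal has infinite codimension. Hence the whole statement reduces to proving $\left(p(X)\right)\not\subseteq\ker(\varphi)$ for every nonzero $p$.

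To this end I would note that $\{X^{n}p(X)\mid n\geq 0\}$ spans $\left(p(X)\right)$, so the inclusion $\left(p(X)\right)\subseteq\ker(\varphi)$ would force $\sum_{j=0}^{d}a_{j}\,(n+j)!=0$ in $\Bbbk$ for every $n\geq 0$. Here one uses $\mathrm{char}(\Bbbk)=0$ (tacitly assumed anyway, since the Example preceding the statement needs $\varphi(X^{n})=n!\neq 0$ for all $n\geq 1$): then $n!$ is invertible, and writing $(n+j)!=n!\prod_{i=1}^{j}(n+i)$ and dividing by $n!$ turns the relation into $\sum_{j=0}^{d}a_{j}\prod_{i=1}^{j}(n+i)=0$ for all $n\geq 0$. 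The left-hand side is a polynomial in the variable $n$ of degree exactly $d$, with leading coefficient $a_{d}$; vanishing at infinitely many values of $n$ it must be identically zero, contradicting $a_{d}\neq 0$. This contradiction proves $\left(p(X)\right)\not\subseteq\ker(\varphi)$, and hence $\varphi\notin\Bbbk[X]^{\circ}$.

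An alternative, more in the spirit of the appendix, is to apply Lemma~\ref{lem:A->f}: as $\Bbbk[X]$ is commutative, $\Bbbk[X]\rightharpoonup\varphi$ is spanned by the functionals $X^{k}\rightharpoonup\varphi$, $k\geq 0$, and $\left(X^{k}\rightharpoonup\varphi\right)(X^{n})=\varphi(X^{n+k})=(n+k)!$; the same degree count shows that these functionals are linearly independent, so $\dim_{\Bbbk}\left(\Bbbk[X]\rightharpoonup\varphi\right)=\infty$, whence $m^{\ast}(\varphi)\notin\mathrm{Im}\left(\varphi_{\Bbbk[X],\,\Bbbk[X]}\right)$ and $\varphi$ lies in no good subspace of $\Bbbk[X]^{\ast}$, i.e.\ $\varphi\notin\Bbbk[X]^{\bullet}=\Bbbk[X]^{\circ}$. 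The only point that needs care — and the one I would spell out — is the passage from the scalar identities $\sum_{j}a_{j}(n+j)!=0$ to a genuine polynomial identity in $n$: this is exactly where characteristic zero is used (to cancel $n!$), and one must record that $\prod_{i=1}^{j}(n+i)$ has degree $j$, so that the leading coefficient $a_{d}$ survives and the contradiction is reached. Everything else is routine.
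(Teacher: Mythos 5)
Your argument is correct, but it takes a genuinely different route from the paper. The paper also argues by contradiction from a nonzero $p(X)=\sum_{j=0}^{n}p_jX^j$ generating a finite-codimensional ideal killed by $\varphi$, but it packages the resulting relations $\sum_{j}p_j\varphi(X^{i+j})=0$, $i=0,\dots,n$, as a linear system whose matrix is the factorial Hankel matrix $T=\bigl((i+j)!\bigr)_{0\le i,j\le n}$, and shows $T$ is invertible by factoring $T=\bigl(i!\,j!\,\binom{i+j}{i}\bigr)$ and citing $\det(Q_n)=1$ for the Pascal matrix, so that $\det(T)=\bigl(0!\,1!\cdots n!\bigr)^2\neq 0$, forcing $p=0$. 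You instead divide each relation $\sum_j a_j(n+j)!=0$ by $n!$ and observe that $\sum_{j=0}^{d}a_j\prod_{i=1}^{j}(n+i)$ is a polynomial in $n$ of degree exactly $d$ with leading coefficient $a_d$, vanishing at infinitely many values, which is impossible; your second variant, via Lemma \ref{lem:A->f} and the linear independence of the functionals $X^k\rightharpoonup\varphi$, is the same estimate recast as $\dim_\Bbbk(\Bbbk[X]\rightharpoonup\varphi)=\infty$. Your route is more elementary and self-contained (no appeal to the Pascal-matrix determinant), and it has the merit of isolating exactly where $\mathrm{char}(\Bbbk)=0$ enters (cancelling $n!$ and having infinitely many distinct values of $n$ in $\Bbbk$); note that this hypothesis is genuinely needed and is only tacit in the paper, whose determinant $\bigl(0!\,1!\cdots n!\bigr)^2$ is likewise nonzero only in characteristic zero — indeed in characteristic $p>0$ the map $\varphi$ kills the ideal $(X^p)$ and the statement fails. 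What the paper's approach buys is a clean finite linear-algebra certificate: an explicit closed formula for $\det(T)$ tied to a known identity, rather than an asymptotic/degree argument. Both your reduction to principal ideals (including the remark that the zero ideal has infinite codimension) and your polynomial-degree argument are sound.
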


\begin{proof}
Assume, by contradiction, that $\varphi \in \Bbbk [X]^{\circ }$. Then
there exists a (finite-codimensional) ideal $I\coloneqq \left\langle p\left(
X\right) \right\rangle$ in $\Bbbk [X]$ with $\varphi
\left( I\right) =0$. Consider the system of the equations $\varphi \left( X^{i}p\left( X\right) \right) =0$ for $i=0,\ldots,n$.
If we write $p\left( X\right) = \sum_{j=0}^{n}p_{j}X^{j}$, these equations become $\sum_{j=0}^{n}p_{j}\varphi\left(X^{i+j}\right)=0$ for $i=0,\ldots,n$.
The matrix associated to this system is
\begin{equation*}
T \coloneqq \left(
\begin{array}{ccccc}
0! & 1! & \cdots  & \left( n-1\right) ! & n! \\
1! & 2! & \cdots  & n! & \left( n+1\right) ! \\
\vdots  & \vdots  & \ddots  & \vdots  & \vdots  \\
\left( n-1\right) ! & n! & \cdots  & \left( 2n-2\right) ! & \left(
2n-1\right) ! \\
n! & \left( n+1\right) ! & \cdots  & \left( 2n-1\right) ! & \left( 2n\right)
!%
\end{array}%
\right)
\end{equation*}%
Thus $T=\Big( \left(i+j\right) !\Big)$ for $i,j$ that run
from $0$ to $n$. We claim that $\det \left( T\right) \neq 0$, or equivalently that $T$ is
invertible, which is impossible since $p(X)\neq 0$, as $I$ is finite-codimensional. To show this,
let us consider the $n$-th Pascal matrix $Q_{n}=\left( q_{ij}\right) $, i.e. the matrix whose entries are given by the relation $%
q_{ij}\coloneqq \binom{i+j}{i}$. Then:%
\begin{equation*}
\det \left( T\right)  = \det  \Big( \left(
i+j\right) !\Big)  \,=\,\det  \left( i!j!q_{ij}\right)
   =  \prod_{i=0}^{n}i!\prod_{j=0}^{n}j!\det \left(  Q_n
\right)
  =  \Big( 0!1!\cdots (n-1)!n!\Big) ^{2}\det
\left( Q_{n}\right).
\end{equation*}
In view of \cite[Discussion preceding
Theorem 4]{BrawerPivorino}, we know that $\det \left( Q_{n}\right) =1$,
whence
$\det(T)\neq0 $
and the claim is proved.
\end{proof}

\begin{remark}
The fact that the map $\varphi\left(X^n\right)=n!$ is not in $\Bbbk [X]^{\circ }$ seems to be well-known, see \cite[Section 2]{FutiaMullerTaft}. This depends on the correspondence between elements in $\Bbbk [X]^{\circ }$ and linearly recursive sequences, see e.g. \cite{LarsonTaft}. Since we could not find an explicit proof that $n!$ defines a non-linearly recursive sequence, we included the previous lemma.
\end{remark}

\smallskip

\textbf{Acknowledgements.}
L. El Kaoutit would like to thank all the members of the Department of Mathematics ``Giuseppe Peano'' for the warm hospitality and for the unsurpassable atmosphere  during his visit.

\end{document}